\PassOptionsToPackage{table}{xcolor}
\documentclass{article}

\usepackage{iclr2027_conference,times}

\usepackage{amsmath}
\usepackage{amsfonts}
\usepackage{amsthm}
\usepackage{array}
\usepackage{graphicx}
\usepackage{mathtools}
\usepackage{booktabs}
\usepackage{longtable}
\usepackage{graphicx}
\usepackage{url}
\usepackage{microtype}
\usepackage{multirow}
\usepackage{nicefrac}
\usepackage{enumitem}
\usepackage{pifont}
\usepackage[normalem]{ulem}
\usepackage{xcolor}
\usepackage{url}            % simple URL typesetting
\DeclareRobustCommand{\cmark}{\ding{51}}
\DeclareRobustCommand{\xmark}{\ding{55}}

\newcommand{\rev}[1]{#1}
\newcommand{\del}[1]{}
\usepackage[hidelinks]{hyperref}
\usepackage[capitalize,noabbrev]{cleveref}

\newcommand{\orcid}[2]{\href{https://orcid.org/#1}{%
  \raisebox{-0.3ex}{\includegraphics[height=1.8ex]{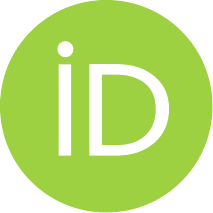}}\hspace{1mm}#2}}

\usepackage{tabularx}
\usepackage{threeparttable}

\newcolumntype{L}[1]{>{\raggedright\arraybackslash}p{#1}}
\newcolumntype{Y}{>{\raggedright\arraybackslash}X}

\theoremstyle{plain}
\newtheorem{theorem}{Theorem}[section]
\newtheorem{proposition}[theorem]{Proposition}
\newtheorem{lemma}[theorem]{Lemma}
\newtheorem{corollary}[theorem]{Corollary}
\theoremstyle{definition}
\newtheorem{definition}[theorem]{Definition}

\theoremstyle{remark}
\newtheorem{remark}[theorem]{Remark}

\iclrfinalcopy
\begin{document}

\title{Higher-Order Kolmogorov-Arnold Networks for Dynamics}

\author{
\orcid{0009-0008-3167-3992}{Kevin Slote} \\
Clarkson Center for Complex Systems Science \\
Clarkson University \\
Potsdam, NY, USA \\
\texttt{kslote@clarkson.edu}
\And
\orcid{0000-0002-1236-742X}{Jeremie Fish} \\
Clarkson Center for Complex Systems Science \\
Clarkson University \\
Potsdam, NY, USA \\
\texttt{jafish@clarkson.edu}
}

\maketitle

\begin{abstract}
The ability to derive the governing equations of a dynamical system from data is essential for prediction and control across many scientific fields. Two challenges, however, are the expansion of candidate terms when reducing higher-order equations to first-order form and noise from estimating derivatives. We propose Higher-Order-KANDy, which addresses both problems by using delay and derivative (``jet'') embeddings of the original data with derivative-ordered Kolmogorov-Arnold Networks for Dynamics (KANDy) layers stacked into a machine learning architecture. Compared with SINDy and WSINDy on noisy data, Higher-Order-KANDy learns governing equations without a candidate library (e.g., recovering the full-period $-\sin(u)$ with nothing named in advance), natively for higher-order systems, and holds the correct term structure as noise grows, selecting the exact active set on every seed at noise levels up to 7.5\% where an oracle weak-form regression admits a spurious term—though the oracle remains the more accurate coefficient estimator. We leverage the relationship between delay and differential embeddings, along with the Kolmogorov–Arnold representation theorem, to construct a delay-to-jet map and learn it as a Kolmogorov–Arnold network to preserve derivative structure and provide meaningful terms that appear in the governing equation. In our synthetic benchmarks, this construction recovers equations in the presence of coarse sampling and noise.
\end{abstract}

\section{Introduction}

Data-driven discovery of governing equations has reshaped forecasting and control grounded in physical law \citep{sindy,rudy2017data,lai2021finding,schmidt2009distilling,bongard2007automated,cranmer2023interpretable}. Sparse identification of nonlinear dynamics (SINDy) recovers interpretable equations by selecting a parsimonious set of terms from a user-specified candidate library~\citep{sindy}, and has been extended to PDEs~\citep{rudy2017data, schaeffer2017learning, li2024argosral} and to discovery from partial measurements~\citep{Bakarji2023DelayAutoencoders, brunton2017chaos, champion2019data}. Two limitations persist: noise amplified when estimating derivatives, especially second- and third-order ones, and the dictionary growth incurred when higher-order ODEs and PDEs are rewritten as first-order systems for sparse regression. Weak Form SINDy (WSINDy) addresses the first by replacing pointwise differentiation with linear transformations and variance reduction~\citep{messenger2021weaksindy}, with extensions to PDEs, interacting particle systems, and streaming data \citep{messenger2021weaksindypde,messenger2022meanfield,messenger2022online}. The remaining bottleneck is the upfront library choice: too small a library cannot express the equations, while too large a one degrades conditioning and interpretability and presupposes knowledge of the governing structure---a difficulty compounded combinatorially by the extra variables introduced in first-order reduction.

\begin{figure}
    \centering
    \includegraphics[width=1.0\linewidth]{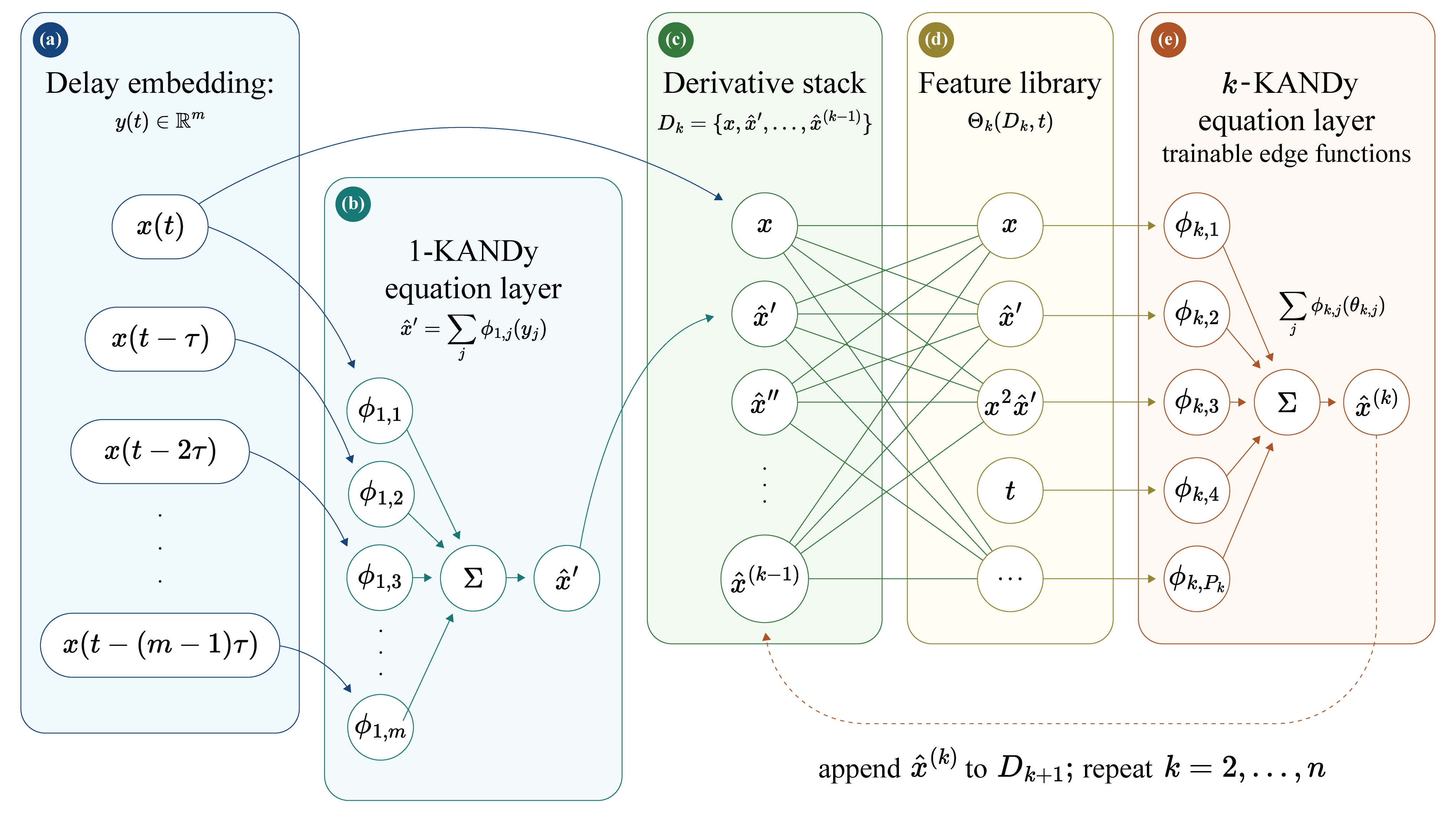}
    \caption{Overview of the Higher-Order-KANDy framework. \textbf{(a)} A delay embedding
$y(t) = (x(t), x(t-\tau), \dots, x(t-(m-1)\tau)) \in\mathbb{R}^m$ ($m$ delays,
step $\tau$) is passed through \textbf{(b)} a 1-KANDy lifting layer that estimates the
latent velocity $\hat{x}'$. \textbf{(c)} Repeated differentiation builds the derivative stack
$\mathcal{D}_k=\{x,\hat{x}',\ldots,\hat{x}^{(k-1)}\}$, from which \textbf{(d)} the feature set
$\Theta_k(\mathcal{D}_k,t)$ is assembled. \textbf{(e)} A $k$-KANDy equation layer learns
univariate edge functions $\varphi_{k,j}$ whose sum predicts $\hat{x}^{(k)}$, so the edges
read out directly as the additive terms of the governing equation.}
    \label{fig:higher-order-kandy}
\end{figure}

Separately, Kolmogorov--Arnold Networks (KANs) \citep{iclrkan2024,liu2024kan2}, inspired by the Kolmogorov--Arnold representation theorem \citep{Kolmogorov1957,Arnold2009_Kolmogorov}, place learnable univariate functions on network edges instead of fixed scalar activations, so each edge is a one-dimensional curve that can be inspected, plotted, and symbolically fit. Their theoretical grounding remains debated \citep{girosi1989representation,lin1993realization,koppen2002training,sprecher2002space,montanelli2020error,he2023optimal,lai2021kolmogorov}, yet KANs perform strongly across scientific domains \citep{scidirectkan2025,oxfordgenomicskan2025,naturegraphkan2025,coxkan2025}, with a growing set of architectural refinements \citep{Bagrow2025,Koenig2025,xkan2024}; in dynamical systems, KAN-based ODEs learn dynamics and hidden physics \citep{Koenig2024}.

The Kolmogorov--Arnold Network for Dynamics (KANDy) \citep{kandy} collapses the KAN to a single layer whose edges become the additive terms of the discovered equation \citep{kandy, deepkoopmankandy}. Like most equation-discovery methods, however, KANDy targets first-order ODEs, predicting a single time derivative from the current state; many physical systems are better described at higher order, and converting them to first-order form expands the dictionary and the room for error.

\paragraph{Related work.}
Scientific machine learning blends mechanistic structure with data-driven
modeling---neural ODE surrogates, physics-informed and operator-learning methods,
structure-preserving models, and Koopman
embeddings~\citep{karniadakis2021physics,chen2018neural,norcliffe2020second,raissi2019physics,lu2021deeponet,greydanus2019hamiltonian,williams2015data}---but
generally learns black-box vector fields. Symbolic regression and sparse equation
discovery instead seek explicit laws by selecting or searching over candidate
expressions~\citep{sindy,crutchfield1987equations,schmidt2009distilling,udrescu2020ai,cranmer2023interpretable}.
Complementary methods target derivative noise, higher-order equations, or partial observations through weak-form regression, differentiable symbolic networks, state inference, or surrogate differentiation~\citep{kacprzyk2023dcipher,martius2016extrapolation,sahoo2018learning,course2023state,kaheman2022automatic,hirsh2022sparsifying,messenger2021weaksindy,berg2019datadriven}.
Within the KAN line, \citet{Panahi2025KANModelDiscovery} fit KANs as
interpretable surrogates for non-sparse dynamics, without recovering closed-form
equations; KANDy~\citep{kandy} then collapsed the network to a
single layer whose univariate edges are the additive terms of the discovered
equation; we extend the construction here to higher order. SINDy-KANs~\citep{howard2026sindykan} (sparse regression per activation) and Symbolic--KAN~\citep{faroughi2026symbolickan} (discrete symbolic structure on the edges) pair KANs with discovery at first order; Note~\ref{sec:supp-backends} runs both on the sine-Gordon benchmark.
Higher-Order-KANDy differs in being end-to-end differentiable, with univariate
edges that directly represent additive equation terms while stacking one layer per derivative order.

\paragraph{Our contribution.}
We introduce \emph{Higher-Order-KANDy}, an architecture defined natively for higher-order ODEs and PDEs. We treat KANDy as a single layer of a deeper network whose $n$-th layer corresponds to the $n$-th derivative $x^{(n)}$ ($n$-KANDy), taking as input either a delay embedding \citep{takens1981, sauer1991embedology} or a \emph{jet embedding}---the observable's derivatives up to the needed order, read off a smooth fit to the data (Section~\ref{sec:delay})---so unobserved velocity and higher derivatives are recovered from scalar data. 
The result is an interpretable chain of single-derivative KAN modules whose edges read out as the additive terms of an $n$th-order governing equation. 
We also characterize the map from a delay window to the derivatives it encodes: it is the transition map between the delay and differential embeddings of the attractor.
%, whose departure from additive separability is proportional to the product of the nonlinearity and the sampling interval (Note~\ref{sec:supp-undersampled}, Lemma~\ref{lem:mixed}), so an additive (one-layer) KAN suffices for the former and a compositional (two-layer) KAN is needed for the latter; on the Lorenz observable at $\Delta t=0.25$ the two-layer map reduces $1-R^2$ on $x''$ from $0.099$ (additive) to $0.0037$. 
We demonstrate the architecture on eleven systems and probes spanning second- and third-order ODEs, nonlinear PDEs, and a hysteretic oscillator with a hidden non-derivative state; Supplementary Table~\ref{tab:systems} lists them with what each establishes.

\section{Methods}
\label{sec:methods}

We seek to discover an interpretable ordinary differential equation (ODE) directly from a scalar time series. Given samples $x(t_0),\dots,x(t_{T-1})$ of a single observable, sampled at uniform spacing $\Delta t$, we posit that the observable obeys an autonomous $n$-th order equation
\begin{equation}
  x^{(n)}(t) \;=\; F\!\big(x,\,x',\,\dots,\,x^{(n-1)}\big),
  \label{eq:nth-order}
\end{equation}
and our goal is to recover the right-hand side $F$ in a form whose individual additive terms can be read off and interpreted physically. 

The central object is Higher-Order-KANDy (Kolmogorov--Arnold Networks for Dynamics), a chain of single-derivative modules, in which each KANDy layer ($n$-KANDy) learns one step of the derivative hierarchy, and whose learnable univariate edges map directly onto the additive terms in \eqref{eq:nth-order}.

\subsection{Higher-Order-KANDy architecture}
\label{sec:arch}

Higher-Order-KANDy chains $n$ KANDy layers (Figure~\ref{fig:higher-order-kandy}), one per derivative order, so that the $k$-th layer predicts $x^{(k)}$ from the observable and the previously predicted lower derivatives.

\paragraph{Layer 1: lifting.}
\label{sec:lift}
The 1-KANDy layer is a black-box lifting map from the delay vector to the velocity,
\begin{equation}
  \hat{x}'(t) \;=\; \mathcal{L}\big(\mathbf{y}(t)\big),
  \label{eq:lift}
\end{equation}
implemented as a KAN layer with $m$ inputs and one output. By Section~\ref{sec:delay}, $x'$ is a function of the delay coordinates, so $\mathcal{L}$ recovers the unobserved velocity (in effect a learned, data-adapted finite-difference operator). For fully observed systems, where a directly measured velocity is available, this layer can be bypassed and the measured $x'$ supplied instead; this avoids contaminating a near-marginal equation with the lift's approximation error.

\paragraph{Layers $2,\dots,n$: equation layers.}
\label{sec:eqlayer}
Each higher module consumes a feature library of candidate nonlinear terms built from the running derivative stack and predicts the next derivative. For equation order $k$, let $\mathcal{D}_k = \{x,\,\hat{x}',\,\dots,\,\hat{x}^{(k-1)}\}$ be the available derivatives. A library $\Theta_k = \{\theta_{k,1},\dots,\theta_{k,P_k}\}$ is a list of named monomials up to order $k$ in these quantities; each term is a product of factors $x^{(r)}$ (the $r$-th derivative) raised to integer powers (e.g., $x$, $x'$, $x^2$, $x\,x'$, $x^2x'$), and may include nonlinear and transcendental functions.
%The equation layer (n-KANDy) is a KAN with one input edge per library term and a single output, so that the predicted derivative is an additive sum of univariate edges,
The equation layer (n-KANDy) learns the approximate symbolic form of the learned derivatives flowing through the network,
%is a KAN with one input edge per library term and a single output, so that the predicted derivative is an additive sum of univariate edges,
\begin{equation}
  \hat{x}^{(k)}(t)
  \;=\;
  \sum_{j=1}^{P_k} \varphi_{k,j}\!\big(\theta_{k,j}\big),
  \label{eq:eq-layer}
\end{equation}
each edge $\varphi_{k,j}$ being a learnable elementary function. In a canonical second-order case ($n=2$) one may take the Duffing--van der Pol library
$\Theta_2 = \{x,\,x',\,x^2x'\}$, giving the discovered equation
\begin{equation}
  \hat{x}'' \;=\;
  \varphi_{1}(x) \;+\; \varphi_{2}(x') \;+\; \varphi_{3}(x^2 x'),
  \label{eq:second-order}
\end{equation}
in which $\varphi_1$ is the restoring force, $\varphi_2$ the (linear) damping, and $\varphi_3$ the cubic damping / nonlinear cross-coupling; the forced-Duffing experiment instead uses edges on $x$, $x'$ and $t$, with the forcing carried by the time edge.
When the library is left empty, the equation layer reduces to free-form edges on the raw derivative coordinates $\mathcal{D}_k$, and a symbolic term such as $x^2$ is read off the shape of the corresponding edge rather than supplied as a column; this free-form additive discovery mode is what the canonical ODE and sine-Gordon experiments use.

Equation~\eqref{eq:eq-layer} is a sum of univariate functions, so a single-equation layer represents exactly those right-hand sides that are additively separable in their input coordinates. 
This is the condition under which the layer is free-form additively discoverable: when the governing law is a sum of terms each depending on one coordinate---as in $u_{tt}=u_{xx}-\sin(u)$, where the wave and restoring terms separate---each term is one edge, and no dictionary is needed. A term that couples coordinates multiplicatively, such as the transport $u\,u_x$ of Korteweg--de Vries or the cross terms $|\dot{x}|z^3$ of the Bouc--Wen memory law, is not of this form: it must be supplied as a composite feature $\theta_j$, which is what a candidate library does. 
%The library-free claims in this paper are therefore claims about additively separable dynamics; the experiments that supply a library are exactly those whose governing law is not of this form.

\subsection{Delay and differential embeddings: observability and noise}
\label{sec:delay}
%Two coordinate systems recur throughout the paper, so we fix the words once. 
The $p$-jet of the observable at time $t$ is simply the list of its derivatives up to order $p$, $(x, \dot{x}, \dots, x^{(p)})$ (for a field, all partial derivatives up to order $p$); it is what \eqref{eq:nth-order} needs on its right-hand side and what the equation layer consumes. 
The map that sends a state of the system to this list is the differential embedding; its counterpart built from time-shifted copies of $x$ is the delay embedding \citep{packard1980, takens1981, gibson1992}. 
The two front ends of Higher-Order-KANDy differ only in which of these coordinates they build from data.

A scalar measurement does not expose the higher derivatives required by \eqref{eq:nth-order}; the velocity $x'$ in particular is unobserved. We resolve this by Takens' delay embedding~\citep{takens1981}, forming the $m$-dimensional delay vector
\begin{equation}
  \mathbf{y}(t) \;=\;
  \big[\,x(t),\, x(t-\tau),\, x(t-2\tau),\,\dots,\, x(t-(m-1)\tau)\,\big]
  \in \mathbb{R}^{m},
  \label{eq:delay}
\end{equation}
with embedding dimension $m$ and lag $\tau$ in sample units. For a generic observable of a smooth dynamical system, the delay map embeds the underlying attractor, so $\mathbf{y}(t)$ furnishes coordinates in which the time derivatives of $x$ are functions of the delay coordinates---this is what makes the lifting step (Section~\ref{sec:lift}) well-posed for scalar data.

Partial observation is not the only barrier. The lifting layer reconstructs the velocity but is trained against pointwise finite-difference targets, and the noise in those targets grows with derivative order. When that noise dominates, we stop estimating derivatives one at a time. Instead, we fit a single smooth surrogate to the data---an analytic spline for fields, a learned map for scalar series---and read the whole $p$-jet off the surrogate by differentiating it rather than the data. We call this front end the \emph{jet embedding}: a differential embedding estimated from a smooth fit, whose output is the same set of derivative coordinates the delay lift produces, obtained without pointwise differencing~\citep{Whalen2015Observability}.

\paragraph{Analytic jet for fields (PDEs).}
For a field $u(x,t)$, the estimating spline must be two-dimensional, but every KAN edge is a one-dimensional spline: a single additive layer represents only sums $\varphi_1(x)+\varphi_2(t)$, never a general surface~\citep{kandy}; a learned per-line operator is also ill-posed, since different spatial locations have different amplitudes and shapes. We therefore use a single analytic surrogate: one tensor-product smoothing spline $S(x,t)$ of degree $(k_x,k_t)=(5,5)$~\citep{dierckx1981surface, dierckx1993curve}, fit to the noisy field with smoothing strength $\lambda = N\,\hat{\sigma}^2$ ($N$ the number of samples, $\hat{\sigma}^2$ from second differences along $t$). Every jet entry is then an exact partial derivative of that one fit,
\begin{equation}
  u \;=\; S,\qquad
  u_{xx} \;=\; \partial_x^2 S,\qquad
  u_{tt} \;=\; \partial_t^2 S,
  \label{eq:fieldjet}
\end{equation}
so a single smoothing-plus-differentiation step supplies the complete jet, including spatial derivatives, with noise suppressed by the smoothing fit rather than by differencing neighboring samples. For a scalar field $u(x,t)$ the $2$-jet at a grid point is $\big(u,u_x,u_t,u_{xx},u_{xt},u_{tt}\big)$. Computing it from the spline rather than learning it hands the equation layer a set of scalar coordinates at each sample point ($u$, $u_{xx}$, $u_{tt}$), and a scalar argument is exactly what a one-dimensional KAN edge requires, so the 2-KANDy layer consumes them natively, resolving the dimensional mismatch. For the undamped sine-Gordon collision, this yields $u_{tt}=c^2\,u_{xx}+\varphi(u)$, with the restoring force $\varphi$ a single free-form edge fit by the linear read-out.

\begin{figure}[t]
    \centering
    \includegraphics[width=0.46\linewidth]{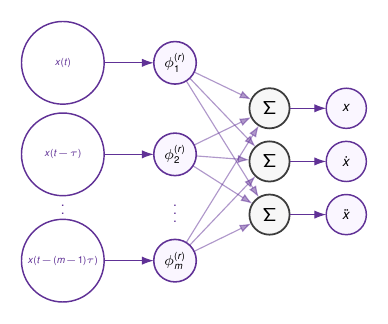}
    \includegraphics[width=0.46\linewidth]{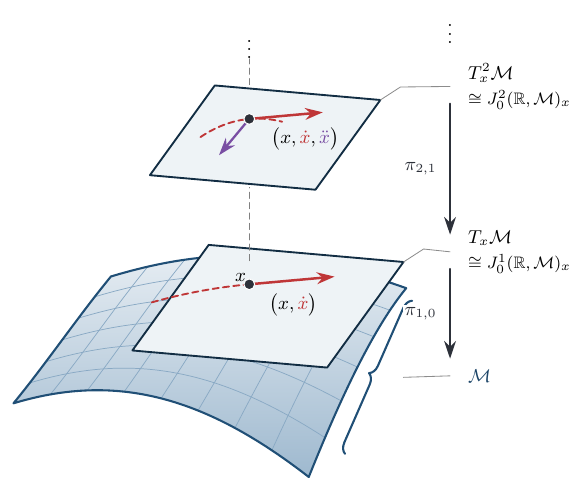}
    \caption{Learning mechanism for the KAN-based jet embedding. Left: the delay window $\mathbf{y}(t)$ flows through learned univariate KAN splines to the jet $(\hat{x},\hat{x}',\dots,\hat{x}^{(p)})$, so every derivative is read from one smooth map of the whole window rather than from pointwise differences. Right: the same map viewed as a change of chart on the attractor---from the delay embedding, which is what a scalar sensor measures, to the differential embedding, in which the governing equation takes the form of Eq.~\eqref{eq:nth-order}.}
    \label{fig:kan-jet}
\end{figure}

\paragraph{Kolmogorov--Arnold jet embeddings (scalar series).}
Two classical results make a learnable differential embedding principled (Fig.~\ref{fig:kan-jet}). For $m\geq 2D+1$ ($D$ the attractor dimension) the delay window determines the state, so the window-to-jet map exists and is continuous~\citep{takens1981, sauer1991embedology}, and the Kolmogorov--Arnold representation theorem~\citep{Kolmogorov1957} gives it the compositional (two-layer) form
\begin{equation}
  F_k(\mathbf{y}) \;=\; \sum_{q=1}^{2m+1} \Phi_{k,q}\!\left( \sum_{i=1}^{m} \phi_{q,i}(y_i) \right),
  \label{eq:kart}
\end{equation}
exactly a two-layer KAN. Writing $p$ for the highest order produced, the operator
\begin{equation}
  \mathcal{J}:\;\mathbf{y}(t)\;\longmapsto\;
  \big(\hat{x}^{(0)}(t),\,\hat{x}^{(1)}(t),\,\dots,\,\hat{x}^{(p)}(t)\big)
  \label{eq:jetop}
\end{equation}
is a KAN with $m$ input edges and $p+1$ outputs, the $k$-th estimating $x^{(k)}$. With linear edges it reduces to a finite-difference formula (a fixed weighted sum of the window samples); with spline edges it is the data-adapted generalization, denoising in the manner of a Savitzky--Golay filter. At dense sampling one additive (one-layer) KAN suffices (Lemma~\ref{lem:stencil}); on undersampled series the window-to-jet map is the transition map between the delay and differential embeddings, which an additive layer measurably fails to represent, and the two-layer form of Eq.~\eqref{eq:kart} is needed (Note~\ref{sec:supp-undersampled}: propositions and two supervised representation probes, on the Lorenz observable and on a coarse noisy sine-Gordon field). 
%After a warm-up, the spline knots are redistributed once to the embedded data (Section~\ref{sec:training}).

In either realization, the jet serves as the lifting layer, and everything downstream is unchanged. The substitution is exact when the system is $n$th-order in the observable, as for Duffing, Genesio--Tesi, and sine-Gordon in $u$; a hidden coordinate that is not a derivative of the observable is recovered by the trained Takens lift but not by the jet (Section~\ref{sec:res-boucwen}).

\subsection{Training objective}
\label{sec:training}
The model is trained end-to-end on a weighted sum of the lifting and equation losses. With $N$ aligned samples in the fitting window,
\begin{equation}
  \mathcal{L}
  \;=\;
  w_{\mathrm{lift}}\,
  \underbrace{\frac{1}{N}\sum_{t}\big(\hat{x}'(t)-x'(t)\big)^2}_{\text{lift loss}}
  \;+\;
  \underbrace{\frac{1}{N}\sum_{t}
    \frac{\big(\hat{x}^{(n)}(t)-x^{(n)}(t)\big)^2}{(\varsigma^{\mathrm{out}}_n)^2}}_{\text{equation loss}},
  \label{eq:loss}
\end{equation}
where the targets $x'$ and $x^{(n)}$ come from smoothed finite differencing or a Savitzky--Golay filter, the equation loss is normalized by the target variance, and $w_{\mathrm{lift}}$ balances velocity recovery against equation fit. When the lift is bypassed, only the equation loss is optimized, and the measured velocity is used in \eqref{eq:eq-layer}. 

\subsection{Baselines}
\label{sec:baselines}

On the PDE benchmark we compare pointwise SINDy/PDE-FIND~\citep{sindy,rudy2017data}, weak-form WSINDy~\citep{messenger2021weaksindy,messenger2021weaksindypde} with a naive polynomial library and with an oracle library containing $\sin(u)$, and D-CIPHER~\citep{kacprzyk2023dcipher}, all on identical noisy data; every method is scored on the recovered restoring force $R(u)$ against $-\sin(u)$ over the observed range, a design that separates derivative-noise error from library misspecification (configurations and D-CIPHER's model class in Note~\ref{sec:supp-baselines}).

\section{Results}
\label{sec:results}

\subsection{Systems analyzed and calibration on canonical oscillators}
\label{sec:res-ode}

\begin{figure}[t]
    \centering
    \includegraphics[width=0.9\linewidth]{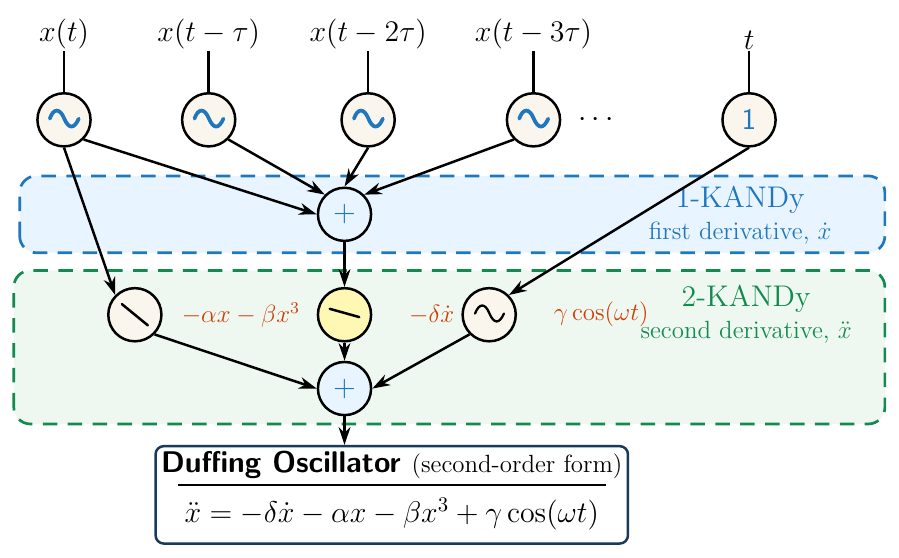}
    \caption{The Higher-Order-KANDy architecture applied to the Duffing oscillator. The inputs to the model are the states $x$ and the delay embedding $x(t - \tau)$ (or the jet-embedding alternative for high-noise data), as shown, and $t$. Each layer denoted $n$-KANDy predicts $x^{(n)}$. Each layer feeds the predicted derivative to the next layer.}
    \label{fig:duffing}
\end{figure}

Supplementary Table~\ref{tab:systems} lists every system, what it is meant to establish, and where its details appear. Three textbook second-order oscillators calibrate the read-out: the forced Duffing oscillator $\ddot{x} = -\alpha x - \beta x^3 - \delta\dot{x} + \gamma\cos(\omega t)$, the undamped pendulum $\ddot{\theta} = -(g/L)\sin\theta$, and the Rayleigh oscillator $\ddot{x} = -x + \mu(\dot{x}-\dot{x}^3)$. 
On each, the lifting layer recovers the velocity and the equation layer the second derivative, and each Layer-2 edge plotted against its argument overlays the corresponding analytic term, with absent terms collapsing to zero. 
Figure~\ref{fig:duffing} shows the architecture as applied to Duffing; Supplementary Figs.~\ref{fig:duffing-training}--\ref{fig:rayleigh-edges} and Table~\ref{tab:ode-coeffs} give the training curves, edge read-outs, and recovered coefficients.

\subsection{A native third-order system}
\label{sec:res-higher}

The architecture is defined for arbitrary order $n$ (Section~\ref{sec:arch}); the oscillators above, like most equation-discovery methods, exercise only $n=2$. 
The Genesio--Tesi system tests the native-order claim directly. From its scalar $x(t)$, with $\dddot{x} = -a\,\ddot{x} - b\,\dot{x} - c\,x + x^2$ ($a=1.2$, $b=2.92$, $c=6.0$)---and with $\dot{x}$ and $\ddot{x}$ never supplied---a delay lift ($m=7$, $\tau=6$) recovers both intermediate derivatives ($R^2=1.000$ and $0.999$) and a third-order equation layer predicts $\dddot{x}$ at $R^2=1.000$, reading the quadratic nonlinearity off the shape of the $x$-edge rather than from a supplied library column (Supplementary Fig.~\ref{fig:genesio-edges}); the recovered coefficients sit within roughly $8\%$ of truth, and free-running the discovered law reproduces the chaotic attractor. 
The same system coarsely sampled separates the front end from the equation representation (Note~\ref{sec:supp-controlled}): at two samples per swing an oracle-tuned Savitzky--Golay estimate of the derivatives loses three of the four terms, while a delay lift calibrated on a densely sampled trajectory recovers all four to within $1\%$ on a different, noise-free trajectory; oracle least squares and generic STLSQ on the lifted derivatives return the same equation, while the free-form additive discovery layer recovers the same terms with a seed spread of up to $14\%$ on the stiffness from equation-layer collinearity; the one-layer and two-layer lifts tie, as Lemma~\ref{lem:mixed} predicts for so weak a nonlinearity. Supporting evidence is in Supplementary Table~\ref{tab:genesio-coeffs} and
Figs.~\ref{fig:genesio-edges} and~\ref{fig:genesio}.

\subsection{Partial differential equations}
\label{sec:res-pde}

We apply the method to nonlinear PDEs by treating each interior spatial location as a scalar time series and supplying the spatial derivatives as additional equation-layer inputs. 
On the damped sine-Gordon equation $u_{tt} = c^2 u_{xx} - \sin(u) - \gamma u_t$ the equation layer recovers all three right-hand-side terms: the wave-speed term ($c^2=0.99$, true $1.0$) and the damping ($\gamma=0.20$, true $0.2$) accurately, and the periodic restoring nonlinearity $-\sin(u)$ (coefficient $-1.00$) across the full observed range $u\in[-1.8,\,11.6]$ (Supplementary Fig.~\ref{fig:sg-edges}).

\paragraph{Robustness under noise.}
On the Korteweg--de Vries equation $u_t = -6\,u\,u_x - u_{xxx}$, whose dispersive third derivative strongly amplifies measurement noise, neither pipeline escapes coefficient bias, but they fail differently: as noise rises from $1\%$ to $10\%$ the PDE-FIND (STLSQ) coefficient on $u\,u_x$ collapses from $-5.9$ to $-0.7$, spurious cross terms enter the support, and forward integration blows up by $t=0.08$, whereas the edge read-out degrades more slowly ($-4.2$ to $-2.6$), admits one spurious term, and forward-integrates a held-out two-soliton solution with the correct speed and shape (Supplementary Fig.~\ref{fig:kdv}).

\subsection{Noise robustness versus SINDy and WSINDy}
\label{sec:res-wsindy}

\begin{table}[t]
\centering\small
\begin{tabular}{llccccc}
\toprule
 & & Ground & SINDy & WSINDy & WSINDy & \textbf{Higher-Order-} \\
 & & truth & poly. & poly. & oracle & \textbf{KANDy} \\
\midrule
\multirow{4}{*}{\rotatebox[origin=c]{90}{\scriptsize{Derivatives}}} & $u_t$ & $\circ$ & \textendash & $\circ$ & $\circ$ & $\circ$ \\
 & $u_x$ & $\circ$ & \textendash & \textendash & \textendash & $\circ$ \\
 & $u_{xx}$ & \cmark & \cmark & \cmark & \cmark & \cmark \\
 & $u_{xt}$ & $\circ$ & \textendash & \textendash & \textendash & $\circ$ \\
\cmidrule(l){2-7}
\multirow{7}{*}{\rotatebox[origin=c]{90}{\scriptsize{Algebraic}}} & $1$ & $\circ$ & $\circ$ & $\circ$ & $\circ$ & \cellcolor{gray!12}$\circ$ \\
 & $u$ & $\circ$ & \xmark & \xmark & \xmark & \cellcolor{gray!12}$\circ$ \\
 & $u^{2}$ & $\circ$ & $\circ$ & $\circ$ & $\circ$ & \cellcolor{gray!12}$\circ$ \\
 & $u^{3}$ & $\circ$ & $\circ$ & \xmark & $\circ$ & \cellcolor{gray!12}$\circ$ \\
 & $u^{4}$ & $\circ$ & $\circ$ & $\circ$ & $\circ$ & \cellcolor{gray!12}$\circ$ \\
 & $u^{5}$ & $\circ$ & $\circ$ & $\circ$ & $\circ$ & \cellcolor{gray!12}$\circ$ \\
 & $\sin(u)$ & \cmark & $\oslash$ & $\oslash$ & \cmark & \cellcolor{gray!12}\cmark \\
\midrule
\multicolumn{2}{l}{TP / FP / FN} &  & 1 / 1 / 1 & 1 / 2 / 1 & 2 / 1 / 0 & \textbf{2 / 0 / 0} \\
\multicolumn{2}{l}{precision} &  & 0.50 & 0.33 & 0.67 & \textbf{1.00} \\
\multicolumn{2}{l}{$F_1$} &  & 0.50 & 0.40 & 0.80 & \textbf{1.00} \\
\bottomrule
\end{tabular}
\caption{Discovered term supports for the sine-Gordon kink--antikink collision, $u_{tt}=u_{xx}-\sin(u)$, at 5\% noise under the symmetric audit protocol: the Higher-Order-KANDy layer carries five free-form edges ($u$, $u_{xx}$, and the null controls $u_t$, $u_x$, $u_{xt}$) with no parsimony penalty on any edge, and a term counts for every method only if its coefficient exceeds $0.01$. \cmark: term selected; \xmark: spurious selected; \textcolor{gray}{$\oslash$}: true term never offered; $\circ$: offered, not selected; \textendash: outside the candidate space. Shaded cells: the branch a free-form additive discovery method searches with one learned $g(u)$. D-CIPHER appears as a reduced diagnostic in Supplementary Table~\ref{tab:sine-gordon-equations}. Precision separates all methods; recall separates only the polynomial libraries, whose missed $\sin(u)$ is a dictionary limitation. Restricted to its own candidate space each polynomial baseline's recall is $1.00$: they fail by coverage (grey $\oslash$), not selection.}
\label{tab:sg-term-support}
\end{table}

We quantify the restoring-force recovery against the sparse-regression baselines of Section~\ref{sec:baselines}, on the undamped sine-Gordon kink--antikink collision $u_{tt}=u_{xx}-\sin(u)$. This trajectory is chosen deliberately: at $x=0$ it sweeps $u$ over a $\pm2\pi$ period, the only regime in which $-\sin(u)$ is genuinely non-polynomial (a degree-five polynomial fits $-\sin$ to $\sim$$0.004$ RMS over $[-\pi,\pi]$ but degrades to $0.22$ over a full period), and the collision breaks the travelling-wave collinearity $u_{tt}=v^2u_{xx}$, so the wave-speed term $c\,u_{xx}$ and the restoring force $R(u)$ are separately identifiable. We fit $u_{tt}=c\,u_{xx}+R(u)$, the equation layer carrying five free-form edges---$u$, $u_{xx}$, and the null controls $u_t$, $u_x$, and $u_{xt}$---with no regularization on any edge (a sensitivity audit confirms the selection owes nothing to a sparsity prior; Note~\ref{sec:supp-equations}), and score two things separately: the support---whether a method selects exactly the true active set $\{u_{xx},\sin u\}$---and the form, the RMS error of the recovered $R(u)$ against $-\sin(u)$ over the observed range. The learned edges are splines, not symbolic expressions: each term's shape is recovered nonparametrically, and naming it---crediting the restoring edge as $\sin(u)$, or reading a slope---is a post-hoc identification against elementary functions, scored by the shape-correlation gate. A term counts for any method only if its coefficient exceeds $0.01$, one percent of the unit-scale true coefficients. Five methods are run at twelve noise levels from $0$ to $10\%$, with an independent noise realization drawn per (level, repeat) over five repeats and the network initialization varied with the repeat; D-CIPHER joins as a reduced diagnostic (one seed at four levels, Supplementary Table~\ref{tab:sine-gordon-equations}) (Table~\ref{tab:sg-term-support}; the full equation each method returned at every level is given in Supplementary Table~\ref{tab:sine-gordon-equations}).\footnote{\href{https://anonymous.4open.science/r/higher-order-kandy-iclr-733E/}{All code and experiments are available on GitHub.}}

The comparison separates two axes. (i) Free-form discovery versus library: the free-form KAN edge recovers the full-period $-\sin(u)$, whereas both polynomial-library baselines are stuck at one true term out of two at every noise level---a dictionary deficit the weak form cannot fix. (ii) Pointwise versus weak/analytic derivatives. Pointwise derivative estimates bottleneck the second-derivative target, and moving the derivatives off the data---onto a smooth surrogate (the analytic two-dimensional spline jet, Section~\ref{sec:delay})---removes this bottleneck.

On form the WSINDy oracle---handed the exact $\sin(u)$ column---is the better estimator at every noise level (Supplementary Table~\ref{tab:sine-gordon-equations}; Supplementary Fig.~\ref{fig:sg-collision}). At $5\%$ it returns $u_{tt}=-0.998\sin(u)+1.007\,u_{xx}$ (plus a spurious $-0.012\,u$), within $1\%$ of truth on the true terms, while the jet's coefficients attenuate to $-0.790$ and $0.879$, and by $10\%$ to $-0.695$ and $0.801$.

On support, the ordering reverses. Higher-Order-KANDy selects exactly $\{u_{xx},\sin u\}$ on all five seeds at every noise level through $7.5\%$, and on four of five at $10\%$. 
For every fit that holds the set, both criteria are met with wide margins: the restoring edge matches $-\sin(u)$ at $\rho \ge 0.952$ against an identification threshold of $0.90$, and all three null edges fall to activation $\le 0.12$ against a pruning threshold of $0.30$, while both live edges stay above $0.63$.
%on the fits that hold the set neither credit gate is close to a tie: the learned restoring edge matches $-\sin(u)$ at $\rho$ between $0.952$ ($1\%$) and ${\geq}0.989$ (all other levels) on the representative seeds against a $0.90$ threshold, and all three null edges prune at activation ${\leq}0.12$ against a $0.30$ threshold while both live edges stay above $0.63$. 
The oracle's support degrades instead: exact on all five seeds through $2\%$, it admits a small spurious linear $u$ (coefficient ${\approx}0.01$) on some seeds from $3\%$ upward, holding the exact set on two seeds in five at $5\%$ and $10\%$ (Supplementary Table~\ref{tab:sine-gordon-equations} reports agreement with the displayed seed's set, $3/5$, which at those levels includes the spurious $u$) (Table~\ref{tab:sg-term-support}; Supplementary Table~\ref{tab:sine-gordon-equations} gives the level-by-level detail). The polynomial libraries never recover the support at any noise level, because $\sin(u)$ is not in their span.

\paragraph{Backends on identical derivatives.}
Feeding every backend the same spline-jet samples the equation layer consumes isolates the representation (Note~\ref{sec:supp-backends}). Handed the $\sin(u)$ column, STLSQ on the same derivatives matches or beats the free-form edge on form at every noisy level while retaining a spurious linear $u$ at $5$--$10\%$, as the weak-form oracle does; without the column it fails by coverage even at zero noise. A re-implementation of SINDy-KAN~\citep{howard2026sindykan} and the authors' Symbolic--KAN code~\citep{faroughi2026symbolickan} on the same derivatives recover no exact support, for structural reasons the note gives.
%; the library-free layer adds nothing to coefficient accuracy once the right column is supplied, and what it adds is not needing the column.

\paragraph{Learning the derivative part.}
Replacing the damped sine-Gordon's linear dissipation with a saturating law $u_{tt}=u_{xx}-\sin(u)-\kappa\tanh(\nu u_t)$ puts a genuinely nonlinear function on the derivative axis; with the equation layer unchanged, all three terms are recovered from one noisy field ($\tanh$ damping to $\sim$$1\%$ relative error at zero noise, $3\%$ at $2\%$ noise), with nothing named in advance (Supplementary Note~\ref{sec:supp-tanh}).

\subsection{Hidden internal states beyond the first-order jet}
\label{sec:res-boucwen}

By the Takens embedding theorem, the delay lift reconstructs the full state from a scalar observable, including internal variables that are not derivatives of what we measure (Section~\ref{sec:delay}). 
The Bouc--Wen hysteretic oscillator tests this directly: its restoring force depends on a hidden memory variable $z$ with its own non-smooth evolution law, and traces a loop that is multivalued in $x$ and in the instantaneous pair $(x,\dot{x})$---the state a first-order-reduced model sees (Fig.~\ref{fig:boucwen}a). 
It is not multivalued in the full jet: by Eq.~\eqref{eq:bw-motion}, $z$ is an affine function of $(x,\dot{x},\ddot{x})$.
Reaching it that way, however, requires a clean second derivative and a third-order closure whose right-hand side ($|\dot{x}|\,|z|^{2}z$ with $z$ affine in the jet) is not additively separable, so a free-form additive discovery equation layer cannot represent it. 
The delay window supplies $z$ without either. 
The two front ends therefore separate sharply: the same KANDy layer is fit from each. 
Fed the exact instantaneous coordinates, it recovers the hidden state only partially ($R^2=0.88$) and the restoring force and acceleration barely at all ($R^2=0.23$ and $0.24$), whereas fed the delay window, it reconstructs $z$ almost exactly ($R^2=1.00$) and predicts both at $R^2=0.99$. 
Reading the edges off the reconstructed state then recovers both laws; with the true $z$ the motion law is within a percent of truth and the memory law within three, and in the observable-only pipeline the coupling holds to under a percent while the stiffness is $7\%$ and the damping $13\%$ high (Note~\ref{sec:supp-boucwen}). 
%\footnote{\href{https://anonymous.4open.science/r/higher-order-kandy-iclr-733E/}{All code and experiments are available on GitHub.}}

\section{Discussion}
\label{sec:discussion}
Across the systems of Supplementary Table~\ref{tab:systems} the same stacked construction recovers governing laws at their native order: textbook oscillator coefficients within a few percent (Duffing, pendulum) and within $11\%$ (Rayleigh), a genuinely third-order law from a scalar observable, PDE right-hand sides per spatial location, and---on sine-Gordon---the exact term support on every seed at noise levels up to $7.5\%$, learning $\sin(u)$ rather than being given it, where an oracle weak-form regression does not. 
%Because each module's univariate edges are the additive terms of the equation, every claim above is read off the model directly rather than inferred from predictions.

\paragraph{Parsimony and noise robustness.}
Sparse regression selects from a fixed library, and when high-order derivatives amplify noise, its coefficients drift, and forward integration diverges \citep{sindy,rudy2017data,lai2021finding}; Higher-Order-KANDy instead absorbs each right-hand-side contribution into one bounded edge and reads derivatives off a smooth surrogate, not the data. The two components pay on different axes: the edge recovers what no polynomial dictionary spans (the full-period $-\sin u$); 
the derivatives come from a smooth map rather than differenced data, and on coarsely sampled series only a learned delay-to-jet lift supplies them accurately (Note~\ref{sec:supp-undersampled}).
%the noise robustness is carried by differentiating the surrogate---a component any back end could adopt.
A practical corollary (Note~\ref{sec:supp-undersampled}): delay-based discovery has an optimal sampling rate and the learned map's tangent condition number is U-shaped in the sampling interval---and that number is computable from network and data alone.
%, a necessary though not sufficient indicator of sampling adequacy. 

\paragraph{Delay-to-jet embeddings.}
Proposition~\ref{prop:ka-delay-jet} establishes sufficient conditions under which the delay-to-jet map is a change of coordinates between delay and differential embeddings and admits a two-layer Kolmogorov--Arnold representation (Figure~\ref{fig:kan-jet}). 
Lemma~\ref{lem:mixed} supplies the complementary lower bound by showing that a single additive layer can be insufficient. 
Thus, two layers are sufficient in general and become necessary within the nonseparable regime identified by the lemma. 
\citet{packard1980} proposed two ways to reconstruct dynamics from a single signal—time delays and successive derivatives—and the field settled on delays because derivatives amplify noise~\citep{gibson1992, Krakavoska2022}, even though derivatives are the coordinates in which physical laws are written, carry physical units, and reduce the entire vector field to a single closure $x^{(n)}=F$.
Higher-Order-KANDy recovers those coordinates by learning the change of representation between the two embeddings: a delay-to-jet map that admits an exact two-layer Kolmogorov–Arnold representation and, on coarsely sampled data, supplies derivatives where fixed differentiation formulas fail.

\paragraph{Limitations.}
 First, where a term library $\Theta_k$ is supplied, the equation layer assumes it; though smaller than a sparse-regression dictionary, an inapt library still misattributes the dynamics. 
 Second, under partial observability, a hidden coordinate that is not a derivative of the observable---the Bouc--Wen memory variable $z$---requires the delay lift, and that lift is a supervised probe: it recovers $z$ at $R^2=0.998$, whereas an unsupervised latent matches $z$ in shape only (Note~\ref{sec:supp-boucwen}); the coefficient read-outs therefore condition on the reconstructed state. 

\bibliographystyle{iclr2027_conference}
\bibliography{references}

\section*{Use of Large Language Models}

We used a generative AI assistant to restructure this manuscript
for readability and length, to draft several passages, to generate code that we validated and ran ourselves, and as an adversarial fact-checker of the text, figures, and tables. We reviewed all AI-assisted work and take responsibility for the paper's final content. 
No scientific content was devised by an LLM.

%\clearpage
\setcounter{section}{0}
\renewcommand{\thesection}{S\arabic{section}}
\setcounter{figure}{0}
\renewcommand{\thefigure}{S\arabic{figure}}
\setcounter{table}{0}
\renewcommand{\thetable}{S\arabic{table}}

\begin{table}[t]
\centering\footnotesize
\setlength{\tabcolsep}{4pt}
\caption{Every system analyzed, the native order $n$ at which it is treated, what the model observes, and what the experiment establishes. Rows above the rule are calibration, native-order, and representation checks reported in full in the supplement; the boldface rows below it carry the paper's headline claims and are detailed in the main text. Hyperparameters for the trained models are listed in Supplementary Table~\ref{tab:supp-hyperparams}.}
\label{tab:systems}
\begin{tabular}{@{}l c l p{5.1cm} l@{}}
\toprule
System & $n$ & Obs. & What it establishes & Details \\
\midrule
Duffing (forced)      & 2 & $x$        & all four coefficients within a few \% & Tab.~\ref{tab:ode-coeffs}, Fig.~\ref{fig:duffing-edges} \\
Pendulum              & 2 & $\theta$   & full $-(g/L)\sin\theta$ edge; absent damping collapses to zero & Tab.~\ref{tab:ode-coeffs}, Fig.~\ref{fig:pendulum-edges} \\
Rayleigh              & 2 & $x$        & nonlinearity carried on the velocity edge, within $11\%$ & Tab.~\ref{tab:ode-coeffs}, Fig.~\ref{fig:rayleigh-edges} \\
Genesio--Tesi         & 3 & $x$        & native third order from a scalar; coefficients within $8\%$, attractor reproduced; survives two samples per swing & Tab.~\ref{tab:genesio-coeffs}, Fig.~\ref{fig:genesio}, Note~\ref{sec:supp-controlled} \\
Korteweg--de Vries    & 1 & $u(x,t)$   & degrades slowly where PDE-FIND diverges under noise & Fig.~\ref{fig:kdv} \\
Sine--Gordon (damped) & 2 & $u(x,t)$   & all three right-hand-side terms recovered & Fig.~\ref{fig:sg-edges} \\
Sine--Gordon (tanh damping) & 2 & $u(x,t)$ & nonlinear damping $-\kappa\tanh(\nu u_t)$ learned on the derivative axis & Note~\ref{sec:supp-tanh} \\
Lorenz (probe)        & --- & $x$      & delay-to-jet map: additive when dense, compositional when undersampled & Note~\ref{sec:supp-undersampled} \\
Sine--Gordon (coarse, noisy; probe) & 2 & $u(x,t)$ & coefficients recovered from $80\times33$ samples at $1\%$ noise by a compositional (two-layer) patch operator calibrated on exact solutions & Note~\ref{sec:supp-undersampled} \\
\midrule
Sine--Gordon (collision) & 2 & $u(x,t)$ & exact term support to $7.5\%$ noise, library-free & Section~\ref{sec:res-wsindy} \\
Bouc--Wen             & $2{+}1$ & $x$  & \textbf{hidden non-derivative state supplied by the delay lift} & Section~\ref{sec:res-boucwen}, Note~\ref{sec:supp-boucwen} \\
\bottomrule
\end{tabular}
\end{table}

\begin{table}[t]
  \centering
  \begin{tabular}{llrr}
    \toprule
    System & Term & Recovered & True \\
    \midrule
    Duffing  & $\alpha$ (restoring, $x$)       & $1.023$  & $1.0$ \\
             & $\beta$ (cubic, $x^3$)          & $0.962$  & $1.0$ \\
             & $\gamma$ (forcing)              & $0.492$  & $0.5$ \\
             & $\delta$ (damping)              & $0.294$  & $0.3$ \\
    \midrule
    Pendulum & $g/L$                           & $9.800$  & $9.810$ \\
             & damping                         & $0.0000$& $0.0$ \\
    \midrule
    Rayleigh & stiffness                       & $-0.900$ & $-1.0$ \\
             & $\mu_{\mathrm{lin}}$            & $0.910$  & $1.0$ \\
             & $\mu_{\mathrm{cub}}$            & $-0.890$ & $-1.0$ \\
    \bottomrule
  \end{tabular}
    \caption{Recovered versus true coefficients for the canonical ODEs.}
  \label{tab:ode-coeffs}
\end{table}

\begin{figure}[t]
  \centering
  \includegraphics[width=0.75\linewidth]{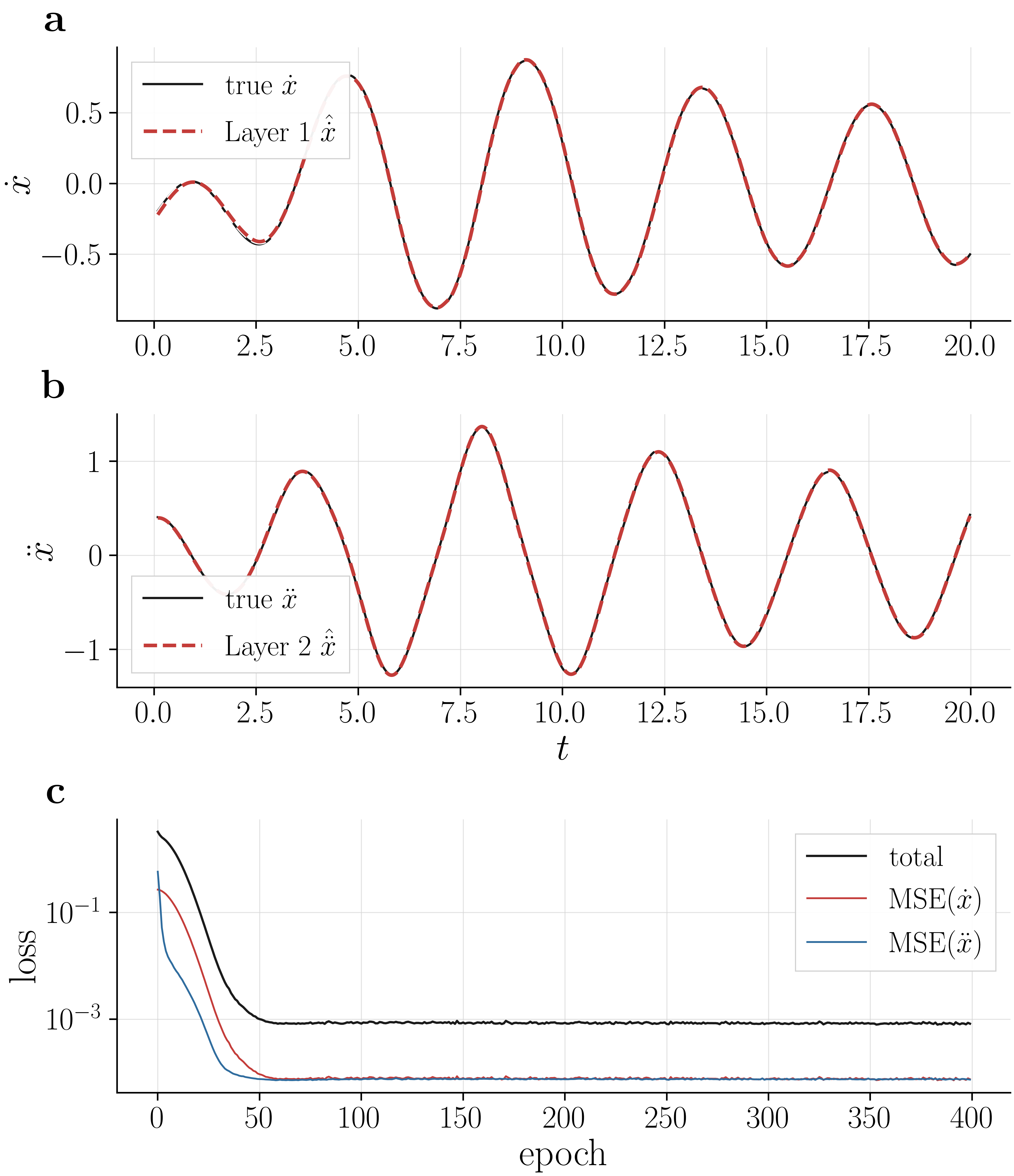}
  \caption{\textbf{Duffing oscillator: trajectory reconstruction.} Top: true
    velocity $\dot{x}$ versus the Layer-1 (lifting) prediction. Middle: true
    acceleration $\ddot{x}$ versus the Layer-2 (equation) prediction. Bottom:
    training losses (total, lift MSE, equation MSE) on a log scale. The two
    learned curves are visually indistinguishable from ground truth and the loss
    converges within $\sim$100 epochs.}
  \label{fig:duffing-training}
\end{figure}

\begin{table}[t]
  \centering
  \caption{Recovered versus true coefficients for the third-order
    Genesio--Tesi system, from the scalar observable $x(t)$ alone.}
  \label{tab:genesio-coeffs}
  \begin{tabular}{llrr}
    \toprule
    Term & Role & Recovered & True \\
    \midrule
    $a$ & $\ddot{x}$ damping     & $1.29$ & $1.20$ \\
    $b$ & $\dot{x}$ damping      & $2.90$ & $2.92$ \\
    $c$ & $x$ stiffness          & $6.39$ & $6.00$ \\
    $q$ & $x^2$ nonlinearity     & $1.06$ & $1.00$ \\
    \bottomrule
  \end{tabular}
\end{table}

\begin{figure}[t]
  \centering
  \includegraphics[width=\linewidth]{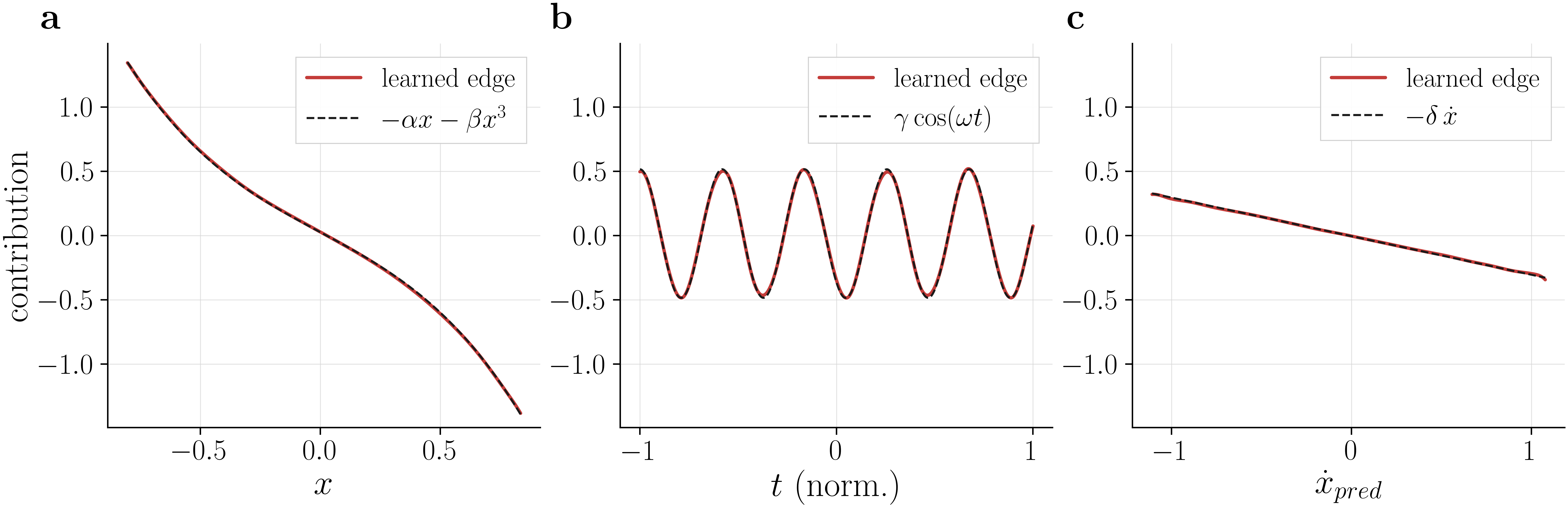}
  \caption{\textbf{Duffing: recovered equation terms.} Learned Layer-2 edges
    (red) versus analytic terms (black dashed), mean-removed. Recovered
    coefficients: $\alpha=1.023$, $\beta=0.962$ (true $1.0,1.0$);
    $\gamma=0.492$ (true $0.5$); $\delta=0.294$ (true $0.3$).}
  \label{fig:duffing-edges}
\end{figure}

\begin{figure}[t]
  \centering
  \includegraphics[width=0.85\linewidth]{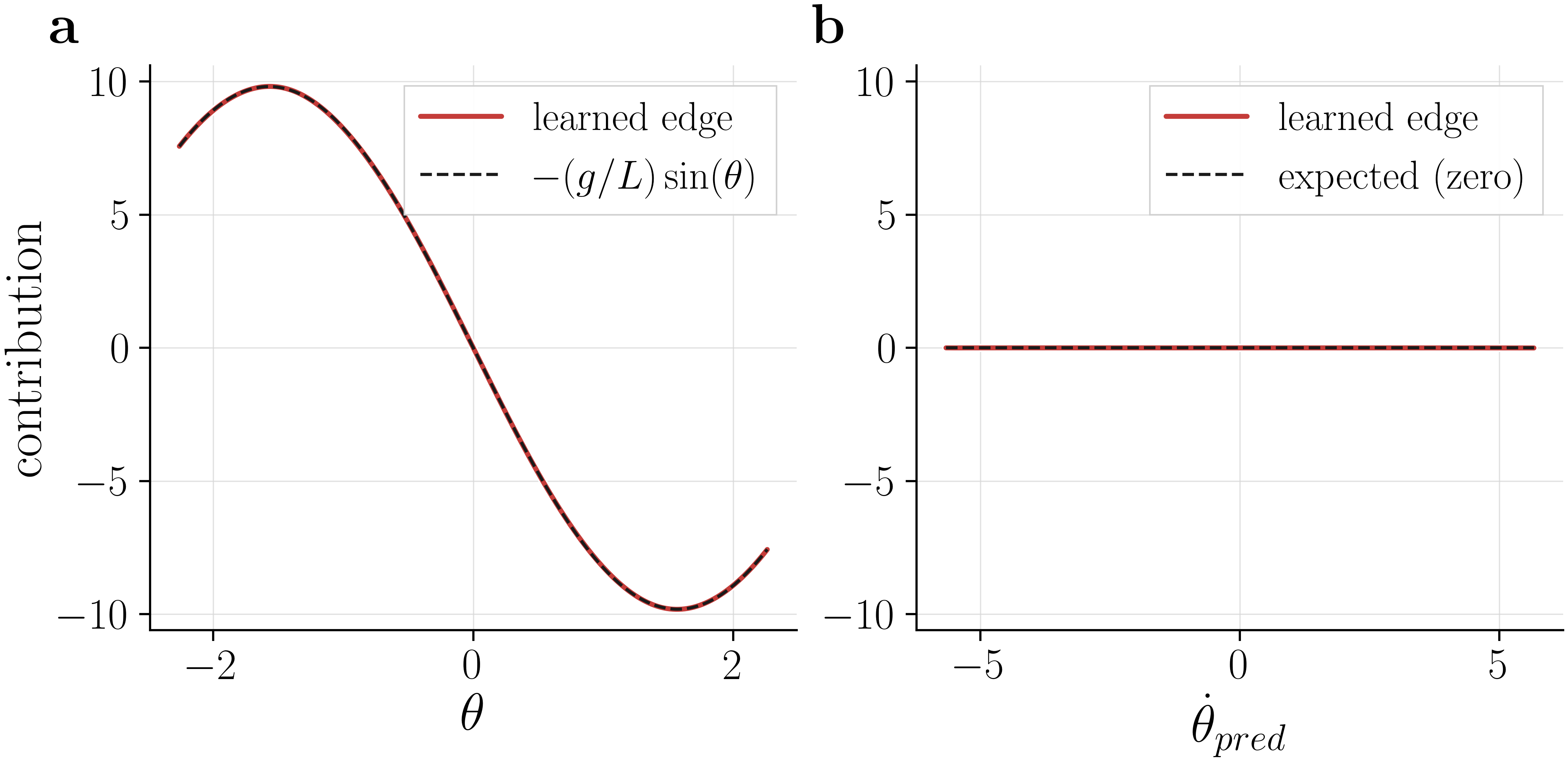}
  \caption{\textbf{Pendulum: recovered equation terms.} The state edge recovers $-(g/L)\sin\theta$ with $g/L=9.800$ (true $9.810$); the velocity edge is flat, giving a damping coefficient of $0.0$ (true $0$).}
  \label{fig:pendulum-edges}
\end{figure}

\begin{figure}[t]
  \centering
  \includegraphics[width=0.85\linewidth]{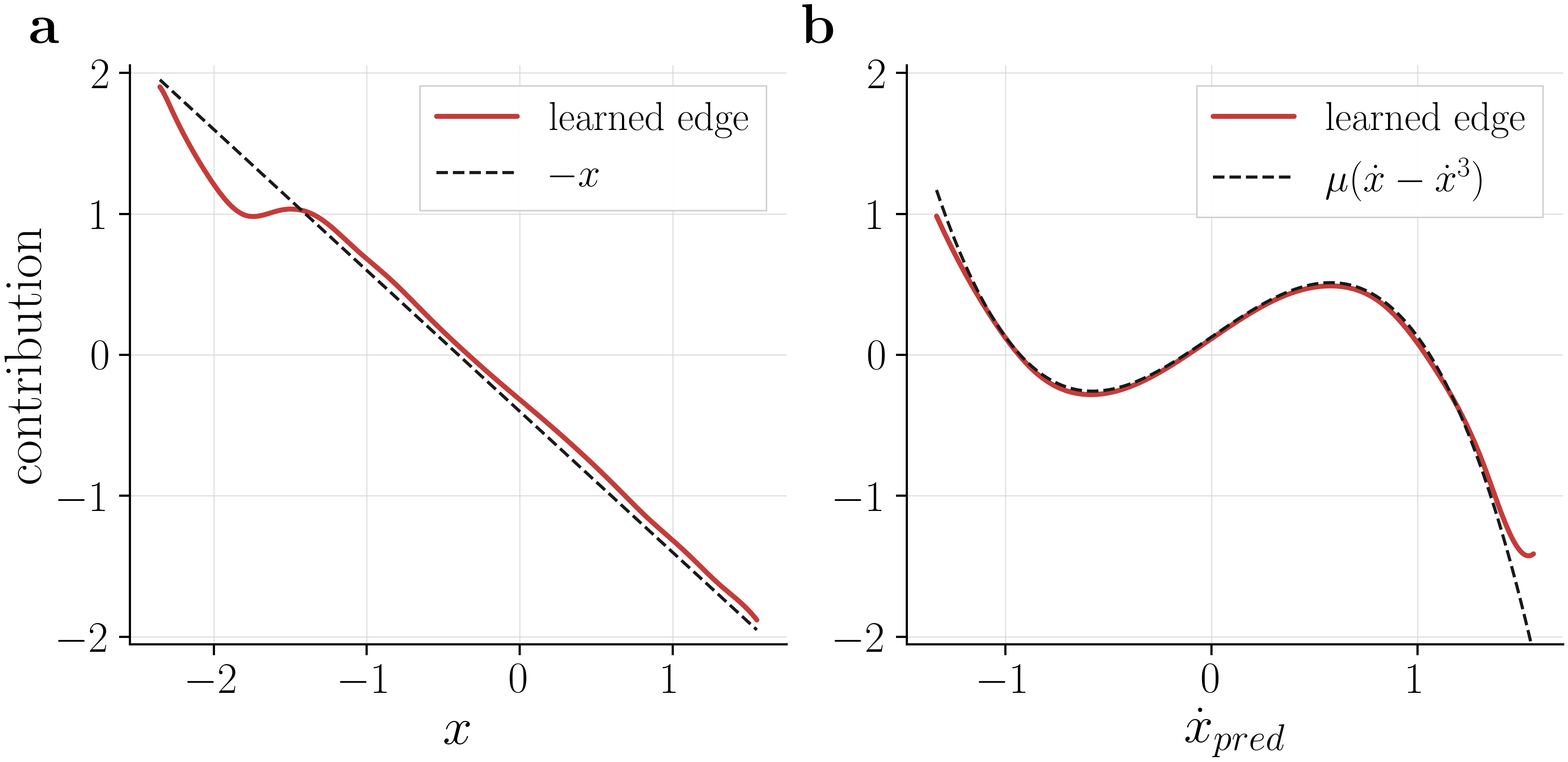}
  \caption{\textbf{Rayleigh oscillator: recovered equation terms.} Linear
    restoring force (stiffness $-0.900$, true $-1.0$) and cubic nonlinear damping
    on the velocity edge ($\mu_{\mathrm{lin}}=0.910$,
    $\mu_{\mathrm{cub}}=-0.890$; true $1.0,-1.0$).}
  \label{fig:rayleigh-edges}
\end{figure}

\begin{figure}[t]
  \centering
  \includegraphics[width=\linewidth]{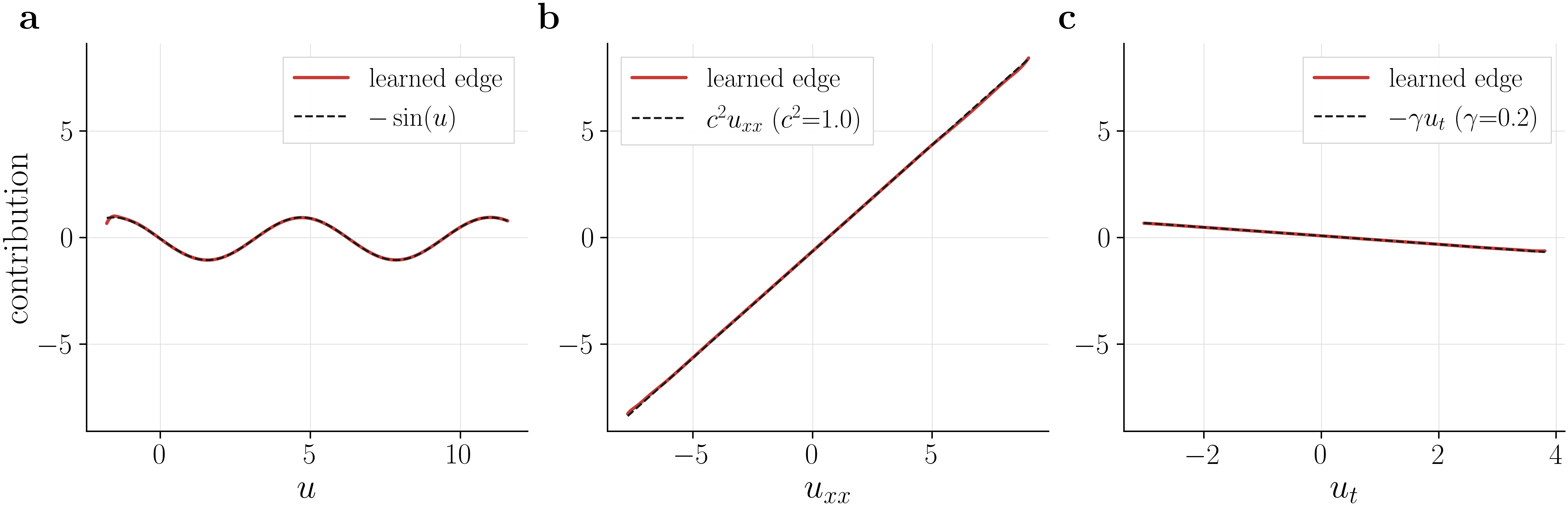}
  \caption{\textbf{Sine-Gordon PDE: recovered equation terms.} Learned edges
    (red) versus analytic terms (black dashed) for the $u$, $u_{xx}$, and $u_t$
    inputs. All three terms are
    recovered accurately ($c^2=0.99$, $\gamma=0.20$, $\sin$ coefficient
    $-1.00$), the restoring nonlinearity tracking $-\sin(u)$ over the full
    observed range $u\in[-1.8,\,11.6]$.}
  \label{fig:sg-edges}
\end{figure}

\begin{figure}[t]
  \centering
  \includegraphics[width=\linewidth]{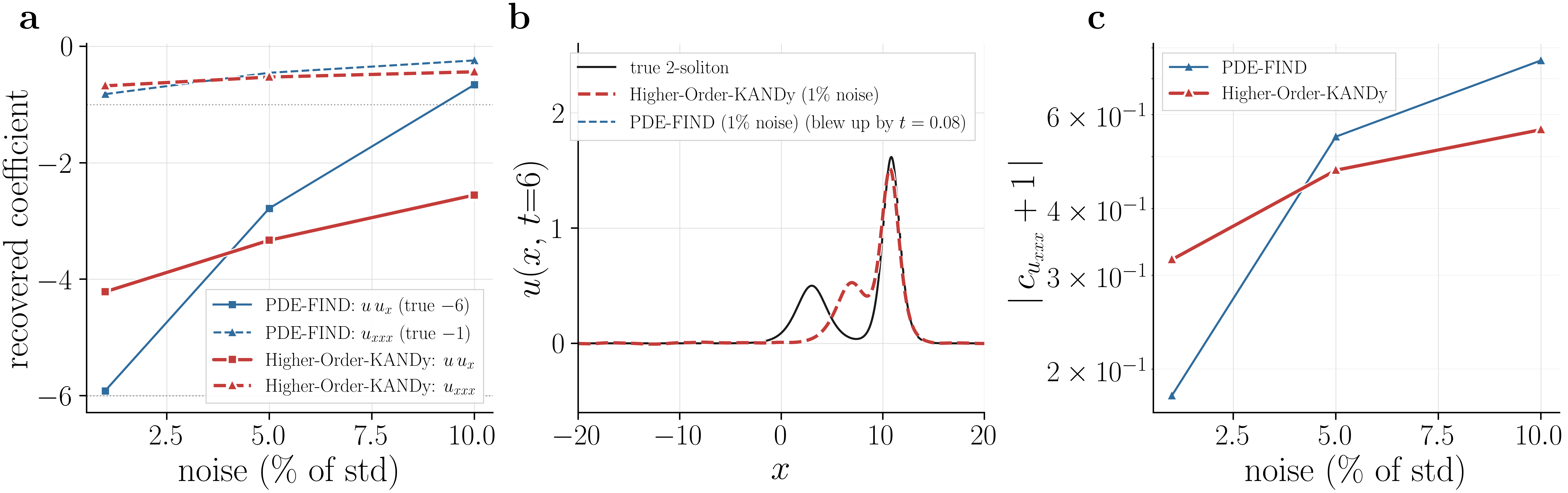}
  \caption{\textbf{KdV: robustness to noise in the third spatial derivative.}
    (A) Recovered coefficients versus noise: both methods drift from truth, the baseline faster and with spurious support entering at every level. (B) Forward integration of a held-out two-soliton solution: the baseline blows up by $t=0.08$, the proposed recovery matches speed and shape (RMS $0.165$ at $t=6$, $1\%$ noise). (C) $u_{xxx}$ coefficient error versus noise (log scale).}
  \label{fig:kdv}
\end{figure}

\begin{figure}[t]
  \centering
  \includegraphics[width=\linewidth]{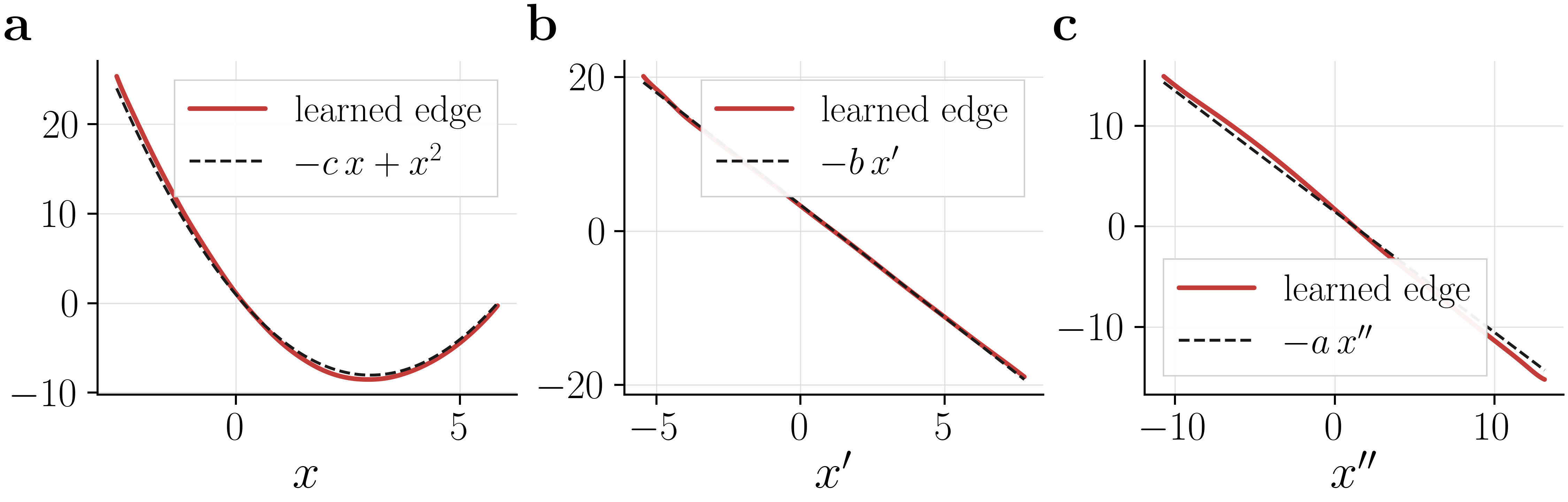}
  \caption{\textbf{Third-order discovery (Genesio--Tesi): recovered equation terms.}
    Learned equation-layer edges (red) against their analytic terms (black dashed).
    \textbf{a:} the $x$ edge carries the stiffness and the quadratic nonlinearity,
    $-c\,x + x^{2}$, read off the shape of the edge rather than supplied as a
    library column. \textbf{b:} the $x'$ edge carries the linear damping $-b\,x'$.
    \textbf{c:} the $x''$ edge carries $-a\,x''$.}
  \label{fig:genesio-edges}
\end{figure}

\begin{figure}[t]
  \centering
  \includegraphics[width=0.85\linewidth]{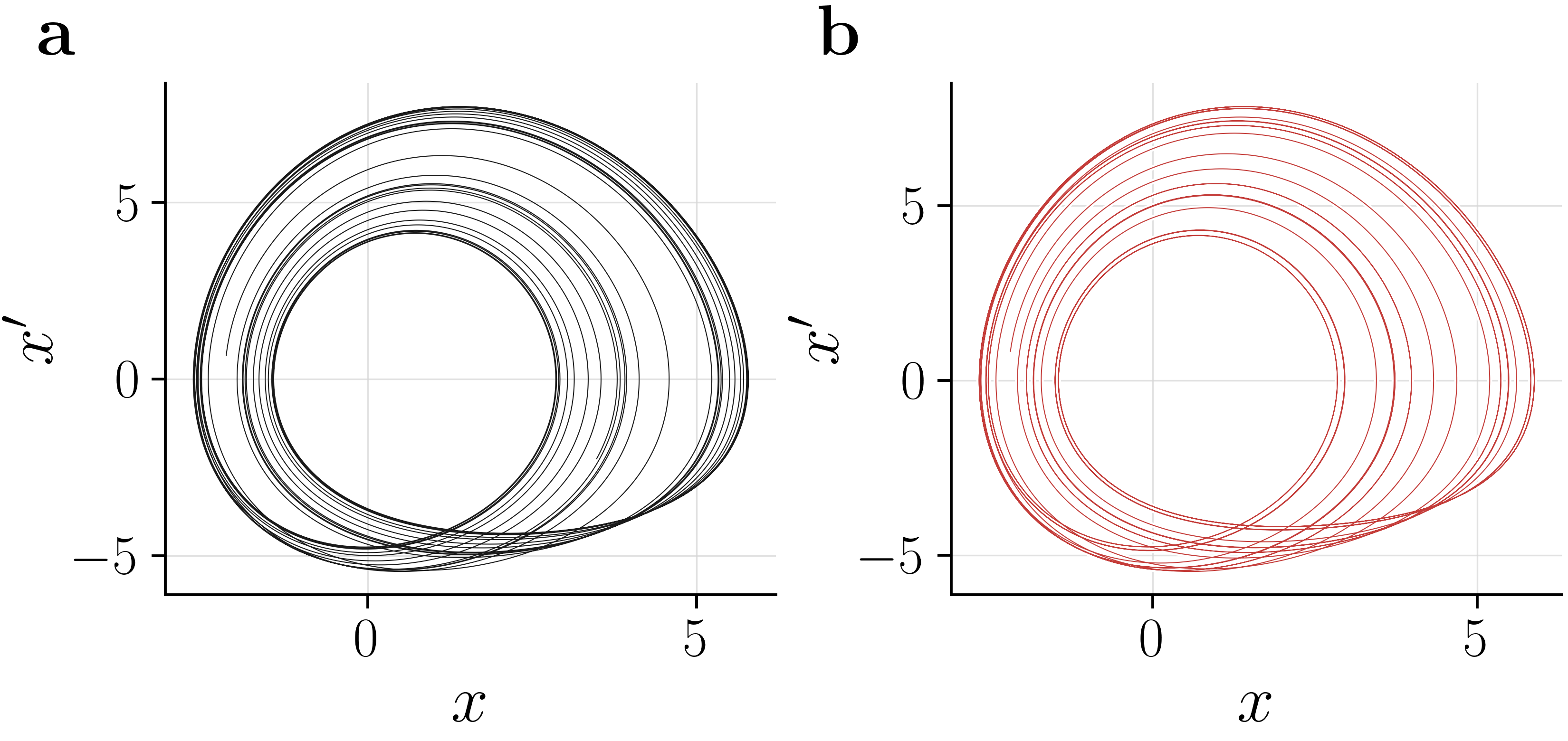}
  \caption{\textbf{Third-order discovery (Genesio--Tesi): free-run attractor.}
    \textbf{a:} the true Genesio--Tesi chaotic attractor in the $(x,x')$ plane.
    \textbf{b:} the attractor produced by freely integrating the discovered
    third-order equation from a single initial condition.}
  \label{fig:genesio}
\end{figure}

\begin{figure}[t]
  \centering
  \includegraphics[width=\linewidth]{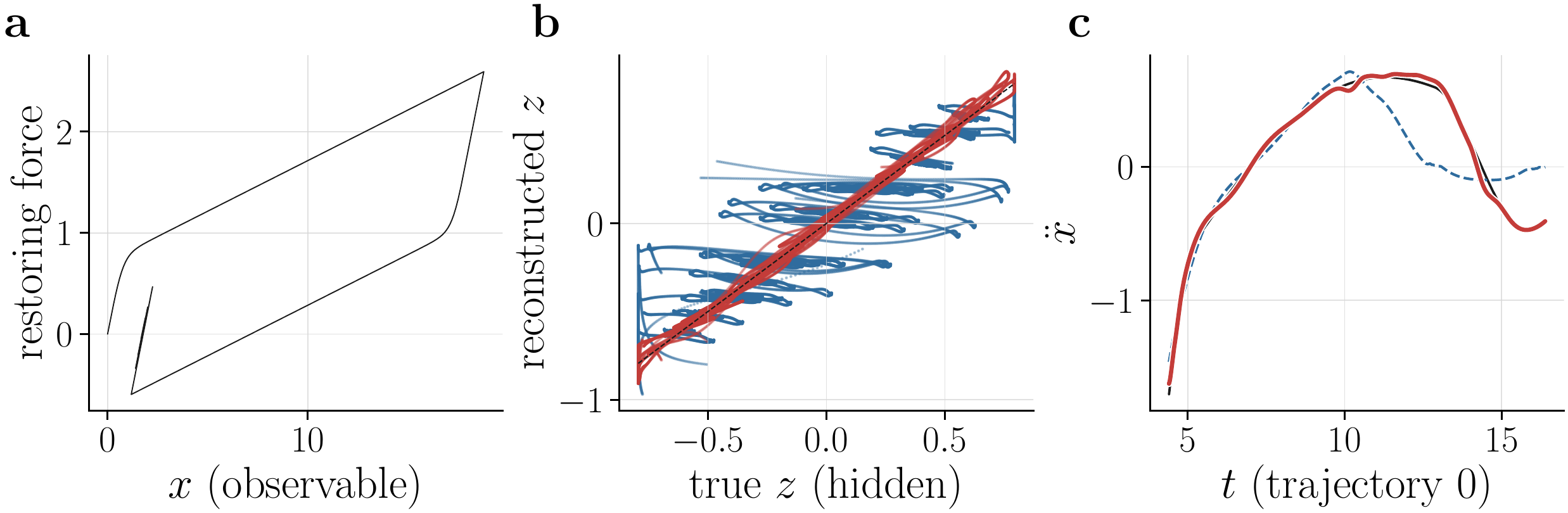}
  \caption{\textbf{Bouc--Wen hysteresis and hidden memory variables.}
    \textbf{a:} the restoring force traces a hysteresis loop---it is multivalued
    in the observable $x$ and in $(x,\dot{x})$, so no function of the
    instantaneous pair can represent it. \textbf{b:} reconstruction of the hidden memory variable $z$: from the
    instantaneous coordinates $(x,\dot{x})$ (blue) the fit is spread and biased
    at the load reversals, whereas from the delay window (red) it collapses onto
    the diagonal ($R^2=1.00$). \textbf{c:} the acceleration $\ddot{x}$ over one
    trajectory---the Takens delay lift (red) tracks the truth (black) while the
    instantaneous predictor (blue) fails.}
  \label{fig:boucwen}
\end{figure}

\begin{table}[t]
  \centering
  \begin{tabular}{llrr}
    \toprule
    Law & Term & Recovered & True \\
    \midrule
    Motion \eqref{eq:bw-motion} & $\alpha k$ (restoring, $x$)        & $0.100$ & $0.100$ \\
                                & $c$ (damping, $\dot{x}$)          & $0.151$ & $0.150$ \\
                                & $(1-\alpha)k$ (coupling, $z$)     & $0.900$ & $0.900$ \\
    \midrule
    Memory \eqref{eq:bw-memory} & $A$ ($\dot{x}$)                   & $0.99$  & $1.0$ \\
                                & $\beta$ ($|\dot{x}|z^3$)          & $0.97$  & $1.0$ \\
                                & $\gamma$ ($\dot{x}|z|^3$)         & $1.01$  & $1.0$ \\
    \bottomrule
  \end{tabular}
  \caption{Recovered versus true coefficients for the Bouc--Wen oscillator.
    The motion-law coefficients shown are read with the reconstructed state
    (true $z$); the observable-only pipeline (delays $\to\hat{z}\to$ equation
    layer) gives $-0.107$, $-0.170$, $-0.906$ (Note~\ref{sec:supp-boucwen}). The memory law is
    identified given the lifted state, which the delays recover at
    $R^2=0.998$.}
  \label{tab:boucwen-coeffs}
\end{table}

\begin{table}[t]
  \centering
  \small
  \setlength{\tabcolsep}{5pt}
  \begin{tabular}{llccccr r}
    \toprule
    Experiment & Basis & $n$ & Grid & $m/\tau$ & Epochs & LR & Batch \\
    \midrule
    \multicolumn{8}{@{}l}{\emph{Canonical second-order ODEs}}\\
    Duffing        & B-spline & 2 & 25 & $4/3$ & 400  & $2\times10^{-3}$ & 128 \\
    Pendulum       & B-spline & 2 & 25 & $3/5$ & 400  & $2\times10^{-3}$ & 128 \\
    Rayleigh       & B-spline & 2 & 25 & $4/3$ & 400  & $2\times10^{-3}$ & 128 \\
    \midrule
    \multicolumn{8}{@{}l}{\emph{Higher-order systems}}\\
    Genesio--Tesi  & B-spline & 3 & 25 & $7/6$ & 1200 & $2\times10^{-3}$ & 512 \\
    \midrule
    \multicolumn{8}{@{}l}{\emph{Nonlinear PDEs}}\\
    Sine--Gordon (damped)   & B-spline & 2 & 35 & $4/3$ & 400 & $2\times10^{-3}$ & 256 \\
    Sine--Gordon (tanh damping) & B-spline & 2 & 25 & -- & 400 & $2\times10^{-3}$ & 256 \\
    Sine--Gordon (WSINDy benchmark) & B-spline & 2 & 25 & -- & 400 & $2\times10^{-3}$ & 256 \\
    Korteweg--de Vries      & B-spline & 1 & 25 & -- & 400 & $10^{-3}$ & 256 \\
    \midrule
    \multicolumn{8}{@{}l}{\emph{Additional systems}}\\
    Bouc--Wen hysteresis & B-spline & 2 & 12 (8 mem.) & $12/40$ & 350--500 & $5\times10^{-3}$ & 512 \\
    \bottomrule
  \end{tabular}
  \caption{Hyperparameters for each experiment. $n$ is the ODE/PDE order, ``Grid'' the KAN grid size, and $m/\tau$ the number of delays and delay step (in samples); all KAN edges use spline order $3$ and the Adam optimizer. Dashes mark inapplicable fields: the sine-Gordon WSINDy and tanh-damping benchmarks and Korteweg--de Vries use the analytic jet, so none has a delay embedding. For Bouc--Wen, the state and force probes use grid $12$ at $350$ epochs, the motion-law read-out $400$ epochs, and the memory-law read-out grid $8$ at $500$ epochs.
  }
  \label{tab:supp-hyperparams}
\end{table}

\clearpage
\section{Learning a nonlinear derivative term}
\label{sec:supp-tanh}

The damped sine-Gordon of Section~\ref{sec:res-pde} of the main text has linear dissipation, so its $u_t$ edge is recovered as a slope and is indistinguishable in kind from a scalar coefficient. To test the derivative axis itself, we replace it with a saturating law,
\begin{equation}
  u_{tt} \;=\; u_{xx} \;-\; \sin(u) \;-\; \kappa\,\tanh(\nu\,u_t),
  \qquad \kappa = 0.15,\; \nu = 1.5,
  \label{eq:tanh-sg}
\end{equation}
evaluated on a breather, whose oscillation sweeps $u_t$ through both signs so that an odd damping law is identifiable. The equation layer is unchanged: free-form
  edges over $\{u,\,u_{xx},\,u_t\}$ (a mixed feature set where $u_{xx}$ is a spatial derivative), giving
\begin{equation}
  \hat{u}_{tt} \;=\; R(u) \;+\; \varphi_2(u_{xx}) \;+\;\varphi_3(u_t).
  \label{eq:tanh-edges}
\end{equation}
All three terms are recovered simultaneously from one noisy field: $R(u)$ matches $-\sin(u)$ to $0.0020$ RMS at zero noise and $0.0068$ at $2\%$; $\varphi_2$ is linear with slope $1.001$ (true $+1$); and $\varphi_3$ matches $-\kappa\tanh(\nu u_t)$ to $0.0011$ and $0.0036$ respectively, against a term whose own RMS amplitude is $0.1115$---roughly $1\%$ relative error at zero noise and $3\%$ at $2\%$ noise. Nothing was named in advance.

D-CIPHER~\citep{kacprzyk2023dcipher} searches for a relation
\begin{equation}
  \sum_{j} \eta_j\, \mathcal{O}_j u \;=\; g(\mathbf{x},u),
  \qquad \eta_j \in \mathbb{R},
  \label{eq:dcipher-form}
\end{equation}
in which the $\mathcal{O}_j$ are differential operators and only the source $g$ is discovered in closed form. The derivative dependence is therefore a linear combination with scalar weights, and $g$ may not depend on $u_t$; no choice of $\{\eta_j,g\}$ reproduces $\kappa\tanh(\nu u_t)$, so \eqref{eq:tanh-sg} lies outside the model class. The same holds for SINDy and WSINDy unless powers of $u_t$ are placed in $\Theta$ by hand, and a truncated polynomial column set cannot represent a saturating function. To quantify the gap without disadvantaging that family we compute its ceiling---the best constant-coefficient damping $\eta\,u_t$ fitted to the same learned edge---which is $0.0238$. Higher-Order-KANDy is thus $21\times$ closer to the true damping law at zero noise and still $6.6\times$ closer at $2\%$, and the ceiling is flat in the noise level because it is a structural limit of the model class rather than a statistical one.

%\clearpage
\section{Hysteresis and hidden memory variables (Bouc--Wen)}
\label{sec:supp-boucwen}

By the Takens embedding theorem, the delay lift reconstructs the full state from a scalar observable, including internal variables that are not derivatives of what we measure (Section~\ref{sec:delay}). We test this directly on the Bouc--Wen hysteretic oscillator, a standard model in structural dynamics whose restoring force depends on a hidden memory variable $z$ with its own nonlinear, non-smooth evolution law:
\begin{align}
  \ddot{x} &= -\alpha k\,x - c\,\dot{x} - (1-\alpha)k\,z, \label{eq:bw-motion}\\
  \dot{z}  &= A\,\dot{x} - \beta\,|\dot{x}|\,|z|^{n-1}z - \gamma\,\dot{x}\,|z|^{n}, \label{eq:bw-memory}
\end{align}
with unit mass, $c=0.15$, $k=1$, $\alpha=0.1$ (hysteresis-dominated, so $90\%$ of the restoring force is carried by $z$), and $A=\beta=\gamma=1$, $n=3$. We observe only $x(t)$, from an ensemble of free-decay trajectories of differing amplitude. Because the restoring force traces a hysteresis loop, it is multivalued in $x$---and in the instantaneous pair $(x,\dot{x})$, the state a first-order-reduced model sees (Fig.~\ref{fig:boucwen}a). It is not multivalued in the full jet: by Eq.~\eqref{eq:bw-motion}, $z=-(\ddot{x}+\alpha k x+c\dot{x})/((1-\alpha)k)$ is affine in $(x,\dot{x},\ddot{x})$. Reaching it that way requires a clean second derivative and a third-order closure $\dddot{x}=F(x,\dot{x},\ddot{x})$ whose right-hand side, obtained by substituting this $z$ into Eq.~\eqref{eq:bw-memory}, contains $|\dot{x}|\,|z|^{2}z$ and is not additively separable; a library-free equation layer cannot represent it. The delay window supplies $z$ without either.

This is precisely the regime the delay lift is built for, and it sharply separates the two front ends\rev{; to isolate the coordinates rather than the model, the same KAN layer is fit from each}. Handed the exact instantaneous coordinates $(x,\dot{x})$---the idealized jet front end, and the best case for SINDy---it recovers the hidden state only partially ($R^2=0.88$) and the restoring force and acceleration barely at all ($R^2=0.23$ and $0.24$), because both are dominated by the load reversals where $z$ is genuinely multivalued in $(x,\dot{x})$. The delay window resolves the ambiguity: it reconstructs $z$ almost exactly ($R^2=1.00$; Fig.~\ref{fig:boucwen}b), and predicts the restoring force and acceleration at $R^2=0.99$ (Supplementary Fig.~\ref{fig:boucwen}c).

The motion law~\eqref{eq:bw-motion} is linear in $(x,\dot{x},z)$. With the reconstructed state the edges recover $\ddot{x}=-0.100\,x-0.151\,\dot{x}-0.900\,z$---i.e.\ $\alpha k=0.10$, $c=0.15$, and coupling $(1-\alpha)k=0.90$, hence $k=1.00$ and $\alpha=0.10$, every term within a percent of truth. Replacing the true $z$ by the delay-reconstructed $\hat{z}$ (an observable-only pipeline: delays $\to\hat{z}\to$ equation layer) gives $-0.107\,x-0.170\,\dot{x}-0.906\,z$: the coupling stays within a percent, while the stiffness reads $7\%$ and the damping $13\%$ high, the cost of the lift's residual error concentrating on the smallest term. The memory law~\eqref{eq:bw-memory} is nonlinear and non-smooth; because a KAN layer sums univariate edges, its two cross terms enter as equation-layer features $|\dot{x}|z^3$ and $\dot{x}|z|^3$, and---given the lifted state, which the delays recover at $R^2=0.998$---the edges recover $\dot{z}=0.99\,\dot{x}-0.97\,|\dot{x}|z^3-1.01\,\dot{x}|z|^3$, i.e.\ $A,\beta,\gamma\approx1$ (Supplementary Table~\ref{tab:boucwen-coeffs}). Recovering this coupled, non-smooth hysteretic law from a single scalar channel is possible only because the delay lift first supplies the hidden state that the instantaneous pair $(x,\dot{x})$ cannot, and that the $2$-jet reaches only through a non-separable third-order closure.

The delay lift used above is a supervised probe: it is trained against the
true $z$, and on held-out data recovers it at $R^2=0.998$. Recovering the same
state without supervision is harder. A latent trained only to predict the
acceleration improves the held-out acceleration fit from $R^2=0.40$ (a comparable
instantaneous-coordinate baseline on that experiment's held-out split) to
$0.82\pm0.12$ across seeds, yet aligns with the true $z$ in shape
only (Pearson $0.79\pm0.11$; aligned $R^2\leq 0$). The coefficient read-outs
reported here therefore condition on the reconstructed state, and closing the gap
between supervised probes and unsupervised latent recovery remains open.

%\clearpage
\section{Undersampling and the compositional delay-to-jet map}
\label{sec:supp-undersampled}

The lifting layer maps a delay window to derivative estimates, and the question of how much architecture that map requires has a sharp answer set by the sampling rate rather than by noise. When the observable is densely sampled, differentiation is a local, linear operation: an optimal linear map on the window---a fixed weighted sum of neighboring samples, the class containing finite differences and Savitzky--Golay---is exact to the truncation order of Lemma~\ref{lem:stencil}, and nonlinearity gains nothing. When the data are undersampled, no Taylor-based finite-difference formula approximates the derivative---for nonlinear $f$, no fixed linear combination of window samples does (for linear $f$ a flow-adapted linear map can remain exact, Remark~\ref{rem:harmonic}); the window determines the state only through the inverse of the delay map $\Phi$, and the derivative is the Lie derivative of the observable evaluated at that state,
\begin{equation}
  x'(t) \;=\; (\mathcal{L}_f h)\big(\Phi^{-1}(\mathbf{y}_t)\big),
  \qquad
  \mathbf{y}_t = \big(x(t),\, x(t-\tau),\, \dots,\, x(t-(m-1)\tau)\big),
  \label{eq:comp-map}
\end{equation}
a genuine composition. For nonlinear $f$ this map is generically not additively separable, so a single KAN layer---a sum of univariate edges $\sum_i \phi_i(x(t-i\tau))$---is structurally too small a class, while the two-layer Kolmogorov--Arnold composition contains it. 

Additionally, combining results from the Takens' delay embedding theorem and the Kolmogorov-- Arnold representation theorem, we show the existence of a map that embeds the attractor in the differential (when the map is injective) or in a jet embedding.
This section first makes these arguments precise, then measures the gap directly.

\paragraph{Theoretical basis.}
Let $\varphi^{t}$ be the flow of $\dot{s}=f(s)$ on a compact invariant set
$\mathcal{A}$ of box-counting dimension $d$, let $h$ be the scalar observable,
$x(t)=h(s(t))$, and let
$$\Phi(s)=\big(h(s),\,h(\varphi^{-\tau}s),\,\dots,\,h(\varphi^{-(m-1)\tau}s)\big)$$
be the delay map, so that $\mathbf{y}_t=\Phi(s(t))$. For generic $(h,\tau)$ and
$m\geq 2d+1$, $\Phi$ is an embedding of $\mathcal{A}$, and a diffeomorphism onto
its image~\citep{takens1981, sauer1991embedology}.

\begin{proposition}[The jet identifiability at any sampling rate]
\label{prop:jet-ident}
Assume $f,h\in C^{k+1}$ and that $\Phi$ is an embedding. 
Then there is a continuous (and smooth when $\mathcal{A}$ is a smooth invariant manifold) function $G_k:\Phi(\mathcal{A})\to\mathbb{R}$ with $x^{(k)}(t)=G_k(\mathbf{y}_t)$ for all $t$, namely $G_k=(\mathcal{L}_f^{\,k}h)\circ\Phi^{-1}$, where $\mathcal{L}_f$ is the Lie derivative along $f$. 
No smallness condition on $\tau$ enters: the delay step is constrained only by the genericity of the embedding.
\end{proposition}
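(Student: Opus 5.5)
The plan is to build $G_k$ explicitly as a composition and verify its three properties in turn: the identity $x^{(k)}(t)=G_k(\mathbf{y}_t)$, continuity, and smoothness in the manifold case.

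\textbf{Step 1: the derivative as a Lie derivative.} I would show by induction on $j\le k$ that along any trajectory $s(t)\in\mathcal{A}$,
$$x^{(j)}(t)=(\mathcal{L}_f^{\,j}h)(s(t)).$$
The case $j=0$ is $x=h\circ s$. For the inductive step, the chain rule gives
$$\tfrac{d}{dt}(\mathcal{L}_f^{\,j}h)(s(t))=\nabla(\mathcal{L}_f^{\,j}h)\cdot f=(\mathcal{L}_f^{\,j+1}h)(s(t)).$$
This needs $\mathcal{L}_f^{\,j}h\in C^1$. Each application of $\mathcal{L}_f$ costs one derivative of $h$ and of $f$. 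Under $f,h\in C^{k+1}$, $\mathcal{L}_f^{\,j}h$ is therefore $C^{k+1-j}$, so $\mathcal{L}_f^{\,k}h$ is at least $C^1$ and the induction closes at $j=k$. This regularity bookkeeping is routine but must be stated.

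\textbf{Step 2: inverting the delay map on the attractor.} Since $\Phi$ is an embedding of the compact set $\mathcal{A}$, it is a continuous injection from a compact space to a Hausdorff space. Hence $\Phi:\mathcal{A}\to\Phi(\mathcal{A})$ is a homeomorphism, and $\Phi^{-1}$ is continuous on $\Phi(\mathcal{A})$. Invariance of $\mathcal{A}$ guarantees that $\varphi^{-i\tau}s\in\mathcal{A}$, so $\Phi$ is well defined on $\mathcal{A}$. It also gives $\mathbf{y}_t=\Phi(s(t))\in\Phi(\mathcal{A})$ for all $t$.

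I then set $G_k=(\mathcal{L}_f^{\,k}h)\circ\Phi^{-1}$. It is continuous as a composition of continuous maps. Combining with Step 1,
$$G_k(\mathbf{y}_t)=(\mathcal{L}_f^{\,k}h)(s(t))=x^{(k)}(t).$$

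\textbf{Step 3: smoothness in the manifold case.} When $\mathcal{A}$ is a smooth invariant manifold, the cited Takens/Sauer result gives that $\Phi$ is a diffeomorphism onto its image, which is then a submanifold of $\mathbb{R}^m$. So $\Phi^{-1}$ is smooth in the manifold sense, of class $C^{r}$ with $r$ the regularity of $\Phi$.

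This is the step I expect to be the main obstacle, and only because of precise bookkeeping. The regularity of $\Phi$ is $\min(C^{k+1}$ of $h$, regularity of $\varphi^{-\tau})$, and the flow of a $C^{k+1}$ field is $C^{k+1}$. Composing with $\mathcal{L}_f^{\,k}h\in C^1$ then gives $G_k\in C^1$ on $\Phi(\mathcal{A})$. I would read ``smooth'' as this finite regularity, or as $C^\infty$ when $f$ and $h$ are. If one wants $G_k$ on an open neighbourhood in $\mathbb{R}^m$ rather than on $\Phi(\mathcal{A})$, I would extend it using a tubular neighbourhood retraction, but this is not needed for the statement.

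\textbf{Final remark on $\tau$.} Nothing in Steps 1--3 uses a Taylor expansion in $\tau$. The only role of $\tau$ is through the hypothesis that $\Phi$ is an embedding, which is exactly the genericity condition. This gives the last sentence of the proposition directly.
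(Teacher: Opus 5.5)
Your proposal is correct and follows the paper's route: identify $x^{(k)}(t)$ with $(\mathcal{L}_f^{\,k}h)(s(t))$, a $C^{1}$ function of the state, and then compose with $\Phi^{-1}$. The paper gets the first step by differentiating $h(\varphi^{\sigma}s(t))$ $k$ times at $\sigma=0$, where you induct with the chain rule, and your compact-to-Hausdorff argument for the continuity of $\Phi^{-1}$ is slightly more careful than the paper's appeal to smoothness, since it also covers the non-manifold case where only continuity is claimed.
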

\begin{proof}
$x(t+\sigma)=h\big(\varphi^{\sigma}(s(t))\big)$; differentiating $k$ times at
$\sigma=0$ gives $x^{(k)}(t)=(\mathcal{L}_f^{\,k}h)(s(t))$, a $C^{1}$ function
of the state. 
Since $\Phi$ is an embedding, $s(t)=\Phi^{-1}(\mathbf{y}_t)$ with
$\Phi^{-1}$ smooth on the compact image; compose.
\end{proof}

\begin{corollary}[The jet embedding is a change of chart]
\label{cor:jet-chart}
For generic $(h,f)$ and $p\geq 2d$, the derivative map
$$\Theta(s)=\big(h(s),\,(\mathcal{L}_f h)(s),\,\dots,\,(\mathcal{L}_f^{\,p}h)(s)\big)$$
is itself an embedding of $\mathcal{A}$ and is the classical differential embedding, and the observability map of the system~\citep{takens1981, packard1980, Whalen2015Observability}. 
The full lift $G=(G_0,\dots,G_p)=\Theta\circ\Phi^{-1}$ of Proposition~\ref{prop:jet-ident} is therefore the transition map between two generic charts of the same attractor: the delay chart, which is what a scalar sensor measures, and the jet chart, in which a system that is $n$th-order in the observable takes the graph form $x^{(n)}=F(x,\dots,x^{(n-1)})$ of Eq.~\eqref{eq:nth-order}. 
The lifting layer thus constructs a new embedding of the data, and the map it must represent is a diffeomorphism between embedded copies of $\mathcal{A}$---a generic composition, with no reason to be additively separable in the delay coordinates.
\end{corollary}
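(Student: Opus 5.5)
The plan has three steps: prove that $\Theta$ is an embedding, identify $\Theta\circ\Phi^{-1}$ componentwise with the lift $G$ of Proposition~\ref{prop:jet-ident}, and read off the graph form. All of the genericity content sits in the first step. The rest is bookkeeping once both $\Phi$ and $\Theta$ are known to be embeddings of $\mathcal{A}$.

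\textbf{Step 1: $\Theta$ is an embedding.} I would invoke the derivative-coordinates version of the Takens/Sauer--Yorke--Casdagli theorem rather than reprove it. For generic $(h,f)$ in the $C^{p+1}$ topology and $p+1\geq 2d+1$, that is $p\geq 2d$, the map $s\mapsto(h,\mathcal{L}_f h,\dots,\mathcal{L}_f^{\,p}h)(s)$ is injective on $\mathcal{A}$ and immersive on its compact smooth parts~\citep{takens1981, packard1980}. Note that $\Theta$ has $p+1$ components. The identification with the observability map is definitional: $\Theta(s)$ is exactly the stack of successive Lie derivatives of $h$ used in nonlinear observability~\citep{Whalen2015Observability}.

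This is the main obstacle. The derivative version of Takens' theorem is usually stated for compact manifolds. Here we need it for an invariant set of box-counting dimension $d$, which is a fractal-set statement whose genericity is over $(h,f)$ and not over the delay. If a clean citation for this case is unavailable, I would first try a transversality argument: show that the set of $(h,f)$ for which $\Theta(s)=\Theta(s')$ with $s\neq s'$ has codimension exceeding $2d$ in the $(p+1)$-dimensional target. Otherwise I would restrict the claim to the smooth-manifold case, which I expect is what "generic" should mean here.

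\textbf{Step 2: $G=\Theta\circ\Phi^{-1}$.} By Proposition~\ref{prop:jet-ident}, each component satisfies $G_k=(\mathcal{L}_f^{\,k}h)\circ\Phi^{-1}$. Stacking the components for $k=0,\dots,p$ gives $G=\Theta\circ\Phi^{-1}$ on $\Phi(\mathcal{A})$. Since $\Phi$ is an embedding, $\Phi^{-1}$ is a homeomorphism from $\Phi(\mathcal{A})$ onto $\mathcal{A}$, and a diffeomorphism onto its image in the manifold case. By Step~1, $\Theta$ is likewise an embedding. Hence $G$ is a bijection from $\Phi(\mathcal{A})$ onto $\Theta(\mathcal{A})$ with inverse $\Phi\circ\Theta^{-1}$, and a diffeomorphism of embedded copies when $\mathcal{A}$ is smooth. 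This is precisely the transition map between the delay chart and the jet chart.

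\textbf{Step 3: the graph form.} Suppose the system is $n$th-order in the observable, meaning $\mathcal{L}_f^{\,n}h=F\circ(h,\dots,\mathcal{L}_f^{\,n-1}h)$. Then on the jet chart the last coordinate is a function of the first $n$, which is exactly Eq.~\eqref{eq:nth-order}. This is immediate from evaluating along trajectories.

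The final sentence, that $G$ has "no reason to be additively separable", I would present as an interpretive remark and not a proved claim. The precise content is deferred to Lemma~\ref{lem:mixed}.
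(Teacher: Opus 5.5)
Your proposal is correct and follows the paper's route. The paper gives no separate proof of the corollary: it rests the embedding claim for $\Theta$ on Takens' derivative-coordinate theorem, and the chart-transition content is exactly the composition argument it later spells out in Proposition~\ref{prop:ka-delay-jet}(ii) and Proposition~\ref{prop:jet-stability}(i)--(ii). The latter also strengthens your Step~3 beyond a tautology, since injectivity of $\Theta$ alone already forces the closure $x^{(p+1)}=(\mathcal{L}_f^{\,p+1}h)\circ\Theta^{-1}$. Your caveat that the cited theorem is stated for compact manifolds rather than sets of box-counting dimension $d$ is well founded. The paper does not resolve this either and effectively restricts to the smooth-manifold case (Remark~\ref{rem:certificates}), so your fallback of proving the result there is the honest reading.
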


\begin{proposition}[Diffeomorphism, conditioning, and stability]
\label{prop:jet-stability}
Assume, in addition to the hypotheses of Proposition~\ref{prop:jet-ident}, that $\mathcal{A}$ is a compact smooth invariant manifold with the ambient-induced metric and that
$\Theta=(h,\mathcal{L}_f h,\dots,\mathcal{L}_f^{\,p}h)$ is an embedding of $\mathcal{A}$. Write $F=\Theta\circ\Phi^{-1}$ and $\mathbf{z}=(z_0,\dots,z_p)$ for the jet coordinates. 
Then:
\begin{enumerate}[label=(\roman*)]
\item  $F:\Phi(\mathcal{A})\to\Theta(\mathcal{A})$ is a diffeomorphism;
its differential between attractor tangent spaces is
$DF_{\Phi(s)}=D\Theta_s\circ(D\Phi_s)^{-1}$, and $F$ conjugates the two
coordinate flows,
$F\circ\big(\Phi\,\varphi^{t}\,\Phi^{-1}\big)
 =\big(\Theta\,\varphi^{t}\,\Theta^{-1}\big)\circ F$.
\item In the jet chart the dynamics take the companion form
$\dot z_0=z_1,\;\dots,\;\dot z_{p-1}=z_p,\;\dot z_p=G(\mathbf{z})$ with the
single closure $G=(\mathcal{L}_f^{\,p+1}h)\circ\Theta^{-1}$---the graph form
of Eq.~\eqref{eq:nth-order} that the equation layer reads: in this chart the
entire vector field reduces to one unknown function.
\item  Let
$\alpha_\Delta=\inf_{s\in\mathcal{A}}
 \sigma_{\min}\!\big(D\Phi_s|_{T_s\mathcal{A}}\big)$ and
$M=\sup_{s\in\mathcal{A}}\|D\Theta_s\|$. Then $\alpha_\Delta>0$ and
$\sup_{\mathbf{y}\in\Phi(\mathcal{A})}
 \big\|DF_{\mathbf{y}}|_{T_{\mathbf{y}}\Phi(\mathcal{A})}\big\|
 \le M/\alpha_\Delta$.
\item If a learned map $\widehat F$ satisfies
$\|\widehat F-F\circ\pi\|_\infty\le\varepsilon$ on a neighborhood of
$\Phi(\mathcal{A})$ of radius below its reach, with $\pi$ the nearest-point
projection, then for a measured delay vector
$\tilde{\mathbf{y}}=\Phi(s)+\eta$ with $\|\eta\|$ below that radius,
\begin{equation}
  \|\widehat F(\tilde{\mathbf{y}})-\Theta(s)\|
  \;\le\;
  \varepsilon+c\,\frac{M}{\alpha_\Delta}\,\|\eta\|,
  \label{eq:jet-stability}
\end{equation}
with $c$ a curvature constant of the image ($c=1$ for perturbations tangent
to $\Phi(\mathcal{A})$).
\end{enumerate}
\end{proposition}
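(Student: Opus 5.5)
The plan is to prove the four items in order; each rests on the fact that $\Phi$ and $\Theta$ are both embeddings of the same compact manifold $\mathcal{A}$.

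\emph{Item (i).} Since $\Phi$ is an embedding of a compact manifold, it is a diffeomorphism onto the submanifold $\Phi(\mathcal{A})$. The same holds for $\Theta$ onto $\Theta(\mathcal{A})$. Hence $F=\Theta\circ\Phi^{-1}$ is a composition of diffeomorphisms between manifolds, with inverse $\Phi\circ\Theta^{-1}$. The chain rule then gives $DF_{\Phi(s)}=D\Theta_s\circ(D\Phi_s|_{T_s\mathcal{A}})^{-1}$. This inverse exists because an embedding is an immersion, so $D\Phi_s$ restricted to $T_s\mathcal{A}$ is injective onto $T_{\Phi(s)}\Phi(\mathcal{A})$. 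The conjugacy is pure algebra:
\[
F\circ\Phi\varphi^t\Phi^{-1}=\Theta\varphi^t\Phi^{-1}=\Theta\varphi^t\Theta^{-1}\circ\Theta\Phi^{-1}=\Theta\varphi^t\Theta^{-1}\circ F .
\]

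\emph{Item (ii).} Set $z_j(t)=(\mathcal{L}_f^{\,j}h)(s(t))$. By the computation in the proof of Proposition~\ref{prop:jet-ident}, $\dot z_j=(\mathcal{L}_f^{\,j+1}h)(s(t))$. For $j<p$ this equals $z_{j+1}$. For $j=p$ it equals $(\mathcal{L}_f^{\,p+1}h)(\Theta^{-1}(\mathbf{z}))$, using that $\Theta$ is invertible on its image. This requires $h,f$ smooth enough that $\mathcal{L}_f^{\,p+1}h$ exists; I would note that this means taking $k\ge p+1$ in the hypotheses.

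\emph{Item (iii).} The map $s\mapsto\sigma_{\min}(D\Phi_s|_{T_s\mathcal{A}})$ is continuous, because $\Phi$ is $C^1$ and the tangent bundle of a compact smooth manifold varies continuously. It is also pointwise positive, because $\Phi$ is an immersion. A positive continuous function on a compact set attains a positive minimum, so $\alpha_\Delta>0$. Similarly $M<\infty$ by compactness. Then
\[
\|DF\|\le\|D\Theta_s\|\,\|(D\Phi_s|_{T})^{-1}\|\le M/\alpha_\Delta .
\]

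\emph{Item (iv).} This is the main obstacle, since it mixes the learned-map error with the geometry of the normal neighborhood. I would split by the triangle inequality:
\[
\|\widehat F(\tilde{\mathbf y})-\Theta(s)\|\le\|\widehat F(\tilde{\mathbf y})-F(\pi\tilde{\mathbf y})\|+\|F(\pi\tilde{\mathbf y})-F(\Phi(s))\|.
\]
The first term is at most $\varepsilon$ by hypothesis, because $\tilde{\mathbf y}$ lies in the tubular neighborhood. For the second term I use that $F(\Phi(s))=\Theta(s)$ and bound it by the Lipschitz constant of $F$ along the manifold times the intrinsic distance between $\pi\tilde{\mathbf y}$ and $\Phi(s)$.

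Two geometric facts remain. First, integrating the bound of (iii) along a geodesic in $\Phi(\mathcal{A})$ gives the Lipschitz constant $M/\alpha_\Delta$ in the intrinsic metric. Second, the intrinsic distance must be compared with $\|\eta\|$. Below the reach, the projection $\pi$ is Lipschitz with a constant depending on reach and radius, so $\|\pi\tilde{\mathbf y}-\Phi(s)\|\lesssim\|\eta\|$. Federer's reach estimates also bound geodesic distance by a constant multiple of chordal distance. Together these produce the curvature constant $c$.

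For perturbations tangent to the image, $\pi$ acts to first order as the orthogonal projection and the path is essentially straight, so $c=1$. I expect the delicate points to be making the constant explicit, and checking that the hypothesis ``$\|\eta\|$ below the radius'' is exactly what keeps $\tilde{\mathbf y}$ in the domain where $\pi$ is defined and the $\varepsilon$-bound applies.
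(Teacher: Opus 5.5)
Your proposal is correct and follows the paper's proof item by item. Item (i) uses composition of the two embeddings with the algebraic conjugacy; (ii) uses the Lie-derivative chain; (iii) uses compactness, the immersion property and the chain rule; and (iv) uses the triangle inequality followed by the mean-value bound of (iii) along a path in $\Phi(\mathcal{A})$ from $\Phi(s)$ to $\pi(\tilde{\mathbf{y}})$ of length at most $c\,\|\eta\|$. Your appeal to the Lipschitz property of $\pi$ below the reach and to the geodesic-versus-chordal comparison simply makes explicit what the paper compresses into $c$. Your regularity remark in (ii) is a useful addition, though $k\ge p$ already suffices: $f,h\in C^{k+1}$ makes $\mathcal{L}_f^{\,p}h$ of class $C^{k+1-p}$, so the closure $\mathcal{L}_f^{\,p+1}h$ is continuous.
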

\begin{proof}
We prove the claims in turn.
\begin{enumerate}[label=(\roman*)]
\item Compositions and inverses of diffeomorphisms onto images; the conjugacy
is immediate from $F=\Theta\circ\Phi^{-1}$. 
\item $\dot z_i=\tfrac{d}{dt}\,\mathcal{L}_f^{\,i}h=\mathcal{L}_f^{\,i+1}h
 =z_{i+1}$ for $i<p$, and
$\dot z_p=(\mathcal{L}_f^{\,p+1}h)(s)=G(\mathbf{z})$ since $\Theta$ inverts.
\item $\sigma_{\min}\big(D\Phi|_{T\mathcal{A}}\big)$ is continuous and
positive on compact $\mathcal{A}$ because $\Phi$ immerses; the bound is the
chain rule with $\|(D\Phi)^{-1}\|=1/\sigma_{\min}(D\Phi)$. 
\item Triangle inequality, then the mean-value bound of (iii) along a path in $\Phi(\mathcal{A})$ from $\Phi(s)$ to $\pi(\tilde{\mathbf{y}})$, whose length is at most $c\,\|\eta\|$ within the reach.
\end{enumerate}
\end{proof}

The sampling interval enters Eq.~\eqref{eq:jet-stability} only through the
conditioning $\alpha_\Delta$ of the delay embedding---there is no
finite-difference truncation factor $\Delta^{-k}$ anywhere in the bound.
Coarse sampling therefore degrades reconstruction only insofar as it
degrades the observability of the delay map, and the failure criterion is
$\alpha_\Delta\to0$ (or loss of injectivity), not large $\Delta$ per se.

\begin{remark}[An exact delay-to-derivative map: the harmonic oscillator]
\label{rem:harmonic}
For $\ddot x=-\omega^2 x$ observed through $x$, two delays suffice: from
$x(t-\Delta)=x(t)\cos\omega\Delta-(\dot x(t)/\omega)\sin\omega\Delta$,
\begin{equation}
  \dot x(t)
  \;=\;
  \omega\,\frac{x(t)\cos\omega\Delta-x(t-\Delta)}{\sin\omega\Delta},
  \label{eq:harmonic-exact}
\end{equation}
exactly, at every $\Delta$ with $\sin\omega\Delta\neq0$. This is not
a finite difference: its weights are flow-adapted, its error does not grow
with $\Delta$, and it fails precisely at the resonances
$\omega\Delta=k\pi$, where the delay pair ceases to identify the state and
$\alpha_\Delta\to0$. Undersampling harms reconstruction through
observability, not truncation. (Here $m=2$ sits below the generic Takens
bound; the theorem's conditions are sufficient, not necessary.)
\end{remark}

\begin{proposition}[Representation classes]
\label{prop:rep-classes}
Let $G_k$ be as in Proposition~\ref{prop:jet-ident}. Then:
\begin{enumerate}[label=(\roman*)]
\item By the Kolmogorov superposition theorem \citep{Kolmogorov1957, Arnold2009_Kolmogorov}, every continuous function on a compact $K\subset\mathbb{R}^m$ can be written $\sum_{q=1}^{2m+1}\Phi_q\big(\sum_{p=1}^{m}\phi_{qp}(y_p)\big)$---the two-layer architecture with inner width $2m+1$. 
The classical inner functions are only continuous; smooth spline parameterizations of both layers are instead dense in $C(K)$~\citep{lai2021kolmogorov}, so the inner width is a capacity hyperparameter rather than the theorem's $2m+1$ (the probes below use width $12$ for $m=7$). 
Hence every $G_k$ of Proposition~\ref{prop:jet-ident} lies in the closure of the compositional (two-layer) lift class. 
\item The one-layer class is the additive family $\{\sum_p \phi_p(y_p)\}$: every $C^{2}$ member has identically vanishing mixed partials $\partial^2 F/\partial y_p\,\partial y_q$ ($p\neq q$). The composition $G_k=(\mathcal{L}_f^{\,k}h)\circ\Phi^{-1}$ lies in this class only when all its mixed partials vanish identically on $\Phi(\mathcal{A})$, which for nonlinear $f$ fails generically (it can hold in special cases: a scalar $\dot{x}=f(x)$ with $h=x$ gives $G_1(\mathbf{y})=f(y_1)$, and for linear $f$  Remark~\ref{rem:harmonic} gives an exactly linear $G_k$); Lemma~\ref{lem:mixed} below computes the mixed partial in closed form for $\ddot{x}=g(x)$. 
The additive class is therefore a proper subclass; how far a given $G_k$ lies from it is a per-system quantity, measured exactly in Table~\ref{tab:undersampled-jet} (Remark~\ref{rem:certificates}).
\item For the compositional (two-layer) form $F_k(\mathbf{y})=\sum_q\Psi_{kq}\big(\sum_i\phi_{qi}(y_i)\big)$, the Jacobian factorizes symbolically,
\begin{equation}
  DF(\mathbf{y})=A(\mathbf{y})\,B(\mathbf{y}),\qquad
  A_{kq}=\Psi'_{kq}\big(\textstyle\sum_i\phi_{qi}(y_i)\big),\qquad
  B_{qi}=\phi'_{qi}(y_i),
  \label{eq:kan-jacobian}
\end{equation}
so the differential of the learned multivariate chart change is expressed entirely through derivatives of its univariate edges. With $U_{\mathbf{y}}$ an orthonormal basis of the tangent space $T_{\mathbf{y}}\Phi(\mathcal{A})$---estimable from data alone by local principal components---local invertibility of the learned map on the attractor is certified by $\sigma_{\min}\big(A(\mathbf{y})B(\mathbf{y})U_{\mathbf{y}}\big)>0$, and its noise amplification by the tangent condition number $\kappa_F(\mathbf{y})=\sigma_{\max}/\sigma_{\min}$: both are computable from the trained network with no ground truth.
\end{enumerate}
\end{proposition}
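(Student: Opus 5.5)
The plan is to prove the three parts in order, reusing Proposition~\ref{prop:jet-ident} throughout for the facts that $G_k=(\mathcal{L}_f^{\,k}h)\circ\Phi^{-1}$ is continuous on the compact set $K=\Phi(\mathcal{A})$, and $C^1$ or smoother when $\mathcal{A}$ is a smooth manifold.

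\emph{Part (i).} First extend $G_k$ from the compact image to a continuous function on a compact box $K\supset\Phi(\mathcal{A})$ by the Tietze extension theorem. Kolmogorov's superposition theorem then gives the exact representation with inner width $2m+1$. For the closure statement I would argue differently: spline-parameterized two-layer maps of arbitrary inner width form a subalgebra-like class dense in $C(K)$ (cite \citet{lai2021kolmogorov}). Alternatively, approximate the classical continuous inner and outer functions uniformly by splines. Uniform continuity of the outer $\Phi_q$ on compact ranges then controls the composed error, since $|\Phi_q(a)-\hat\Phi_q(b)|\le|\Phi_q(a)-\Phi_q(b)|+\|\Phi_q-\hat\Phi_q\|_\infty$. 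This gives $G_k$ in the uniform closure.

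\emph{Part (ii).} The vanishing of mixed partials for $\sum_p\phi_p(y_p)$ with $C^2$ edges is immediate by differentiation. The converse direction, that membership requires vanishing mixed partials, is the contrapositive. Note that on $\Phi(\mathcal{A})$ the partials must be read as derivatives of some $C^2$ extension, or tangentially, when $\mathcal{A}$ has lower dimension than $m$. I would state the criterion for a neighborhood extension and defer the existence of a nonzero mixed partial to Lemma~\ref{lem:mixed}. For the special cases:
\begin{itemize}
\item For scalar $\dot x=f(x)$ with $h=x$, $G_1(\mathbf{y})=f(y_1)$ depends on one coordinate.
\item For linear $f$, Remark~\ref{rem:harmonic} exhibits a linear, hence additive, $G_k$.
\end{itemize}
Together these show the inclusion is proper but not universal. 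Genericity of failure for nonlinear $f$ I would support only via the closed-form mixed partial of Lemma~\ref{lem:mixed}, nonzero whenever $g''\neq0$ and $\Delta>0$.

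\emph{Part (iii).} This is the chain rule. Differentiating $F_k=\sum_q\Psi_{kq}(S_q)$ with $S_q=\sum_i\phi_{qi}(y_i)$ gives $\partial_{y_i}F_k=\sum_q\Psi'_{kq}(S_q)\phi'_{qi}(y_i)$, which is the factorization \eqref{eq:kan-jacobian}. The restriction to tangent directions is $DF\,U_{\mathbf{y}}$. Local invertibility onto the image follows from the inverse function theorem applied in tangent coordinates when $\sigma_{\min}(ABU)>0$. The condition number is the usual first-order amplification ratio. Ground-truth-freeness holds because $A$ and $B$ are edge derivatives and $U$ comes from local PCA.

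The main obstacle is the closure claim in (i) with smooth spline edges, and in particular uniformity of the outer approximation when the inner sums are themselves perturbed. I would handle it by the uniform-continuity argument above rather than by any reliance on differentiability of the classical Kolmogorov functions, which are nowhere smooth.
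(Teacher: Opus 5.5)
Your proposal is correct and follows essentially the paper's own route. Part (i) is the Tietze-extension, Kolmogorov-superposition and uniform-continuity spline argument that the paper carries out in Proposition~\ref{prop:ka-delay-jet}(iii)--(iv); the obstacle you flag is handled there by approximating each outer function on its range enlarged by a fixed margin, so the perturbed inner sums stay inside. Part (ii) is the inline mixed-partial argument, with the same extension caveat as Remark~\ref{rem:certificates}, and part (iii) is the chain rule.

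One small correction: Lemma~\ref{lem:mixed} does not give a nonvanishing mixed partial for every $\Delta>0$ with $g''\neq0$. It gives one only for sufficiently small $\Delta$, and only on delay images of state sets with nonempty interior; on a single orbit every function is trivially additive.
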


The genericity in (ii) can be made exact in the simplest higher-order case,
where the departure from additivity is computable in closed form and is
proportional to the product of the nonlinearity and the sampling interval.

\begin{lemma}[Non-separability of the window-to-velocity map]
\label{lem:mixed}
Let $\ddot{x}=g(x)$ with $g\in C^{5}(\mathbb R)$, write
$\varphi^{s}(x,v)=\bigl(X(s;x,v),\,V(s;x,v)\bigr)$ for its phase-space
flow, and let $K\subset\mathbb R^{2}$ be a compact set of states
$(x,v)$ with nonempty interior. For $\Delta>0$ define the two-sample
delay map
$$
  H_\Delta(x,v)=\bigl(x,\;X(-\Delta;x,v)\bigr).
$$
Then there is $\Delta_0>0$ such that for every $\Delta\in(0,\Delta_0]$:
\begin{enumerate}[label=(\roman*)]
\item $H_\Delta$ is a $C^{2}$ diffeomorphism from a neighborhood of $K$
      onto its image, so the window-to-velocity map
      $G_{1,\Delta}=\pi_v\circ H_\Delta^{-1}$ is well defined and $C^{2}$
      on the open set $H_\Delta(\operatorname{int}K)$;
\item uniformly for $\mathbf y=(y_1,y_2)\in H_\Delta(K)$,
      \begin{equation}
        \frac{\partial^{2} G_{1,\Delta}}{\partial y_1\,\partial y_2}(\mathbf y)
        =\frac{\Delta}{12}\,g''(y_1)+O(\Delta^{2});
        \label{eq:mixed}
      \end{equation}
\item if $U\subset\operatorname{int}K$ is open and $g''$ has no zero on
      the $x$-projection of $U$, then, after shrinking $\Delta_0$,
      $G_{1,\Delta}$ is not additively separable on $H_\Delta(U)$: no
      functions $\phi_1,\phi_2$ satisfy
      $G_{1,\Delta}(\mathbf y)=\phi_1(y_1)+\phi_2(y_2)$ there;
\item if $g$ is affine, then $G_{1,\Delta}$ is exactly affine, hence
      additively separable, on $H_\Delta(K)$;
\item $G_{1,\Delta}$ admits an additive approximation on $H_\Delta(K)$
      with sup-norm error $O(\Delta^{2})$, namely
      $\mathbf y\mapsto (y_1-y_2)/\Delta+\tfrac{\Delta}{2}g(y_1)$.
\end{enumerate}
\end{lemma}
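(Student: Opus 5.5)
The plan is to expand the backward flow in a Taylor series in $\Delta$, invert the two-sample map perturbatively, and read every item off the resulting expansion of $G_{1,\Delta}$. Write $X(-\Delta;x,v)=x-\Delta v+\tfrac{\Delta^2}{2}g(x)-\tfrac{\Delta^3}{6}g'(x)v+\tfrac{\Delta^4}{24}\bigl(g''(x)v^2+g'(x)g(x)\bigr)+R_5$. The derivatives follow from $\dddot x=g'(x)\dot x$ and $x^{(4)}=g''(x)\dot x^2+g'(x)g(x)$. Because $g\in C^5$, the flow is $C^5$ in $(s,x,v)$, so the remainder $R_5=O(\Delta^5)$ holds uniformly on a neighborhood of $K$ together with its $(x,v)$-derivatives up to order $2$. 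I would justify this by integrating along $s\in[-\Delta,0]$ using the smooth dependence of solutions on data.

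For (i), use the rescaled map $\tilde H_\Delta(x,v)=\bigl(x,(x-X(-\Delta;x,v))/\Delta\bigr)$. It equals $(x,\,v-\tfrac{\Delta}{2}g(x)+O(\Delta^2))$, which is a $C^2$-small perturbation of the identity on a compact neighborhood $K'$ of $K$.
\begin{itemize}
\item The Jacobian is $I+O(\Delta)$, so $\tilde H_\Delta$ is a local diffeomorphism.
\item Injectivity on the convex hull of $K'$ follows from the mean-value inequality $|\tilde H_\Delta(p)-\tilde H_\Delta(q)-(p-q)|\le C\Delta|p-q|$, valid once $C\Delta<1$.
\end{itemize}
Since $H_\Delta$ is $\tilde H_\Delta$ composed with an invertible linear map, (i) follows. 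The restriction $g\in C^5$ giving a $C^5$ flow is what makes $G_{1,\Delta}$ at least $C^2$.

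For (ii), solve $y_2=X(-\Delta;y_1,v)$ for $v$ by fixed-point iteration. With $w=(y_1-y_2)/\Delta$, I expect
\[ v=w+\tfrac{\Delta}{2}g(y_1)-\tfrac{\Delta^2}{6}g'(y_1)w+\tfrac{\Delta^3}{24}g''(y_1)w^2+\tfrac{\Delta^3}{c}\,g'(y_1)g(y_1)+O(\Delta^4), \]
where the exact constant $c$ in the last term is irrelevant. Compute the inverse to order $\Delta^3$, keeping the $\Delta^2 g'(y_1)\bigl(\tfrac{\Delta}{2}g\bigr)$ cross-term and recomputing the $w^2$ coefficient carefully, since the $1/12$ must come from combining the $\tfrac{1}{24}$ term with feedback from the $\tfrac{\Delta^2}{6}g'v$ term. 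Then differentiate using $\partial_{y_1}w=1/\Delta$ and $\partial_{y_2}w=-1/\Delta$.
\begin{itemize}
\item Terms depending only on $y_1$ have zero mixed partial.
\item The term $-\tfrac{\Delta^2}{6}g'(y_1)w$ gives $\tfrac{1}{6}g''(y_1)\cdot\Delta^2\cdot\Delta^{-1}\cdot$ (sign), i.e.\ an $O(\Delta)$ contribution.
\item The $w^2$ term gives $\Delta^3\cdot(-2/\Delta^2)\cdot\tfrac{1}{24}g''=-\tfrac{\Delta}{12}g''$ at leading order, plus $O(\Delta^2)$ pieces from $y_1$-derivatives of $g''$.
\end{itemize}
Summing the contributions should give $\tfrac{\Delta}{12}g''(y_1)$. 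Uniformity of the $O(\Delta^2)$ error requires the remainder bound to survive two derivatives after dividing by $\Delta^2$. That means controlling $\partial^2$ of the $O(\Delta^4)$ remainder in $v$ (which involves $w$-derivatives costing $\Delta^{-2}$) at order $\Delta^2$. This is the main obstacle: I would handle it by doing the implicit-function inversion in the rescaled variables $(y_1,w)$, where all derivatives are uniformly bounded, and keeping the expansion to order $\Delta^4$ using $g\in C^5$.

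Items (iii)--(v) then follow quickly.
\begin{itemize}
\item (iii) Any additively separable $C^2$ function on the connected open set $H_\Delta(U)$ has vanishing mixed partials. Work on each component, or on a small ball, if $H_\Delta(U)$ is not connected. If $|g''|\ge c_0>0$ on the $x$-projection of $U$ (shrink $U$ to have compact closure), (ii) gives $|\partial^2 G_{1,\Delta}/\partial y_1\partial y_2|\ge c_0\Delta/12-C\Delta^2>0$ for small $\Delta$, a contradiction.
\item (iv) If $g(x)=ax+b$, the flow is affine in $(x,v)$, so $H_\Delta$ is affine and invertible, and its inverse composed with $\pi_v$ is affine.
\item (v) This is the truncated expansion: $G_{1,\Delta}-w-\tfrac{\Delta}{2}g(y_1)=O(\Delta^2)$ uniformly, because $w$ ranges over a bounded set (it approximates $v$ on $K$).
\end{itemize}
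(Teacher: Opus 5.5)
Your proposal is correct and follows essentially the same route as the paper. Both arguments expand the backward flow to order $\Delta^4$ with a remainder controlled in $C^2$, invert by the implicit function theorem in the rescaled variables $(y_1,(y_1-y_2)/\Delta)$, and assemble the mixed partial from the $\tfrac{\Delta}{6}g''$ contribution of the $-\tfrac{\Delta^2}{6}g'(y_1)w$ term and the $-\tfrac{\Delta}{12}g''$ contribution of the $\tfrac{\Delta^3}{24}g''(y_1)w^2$ term, with the $\Delta^4$ remainder losing only $\Delta^{-2}$ under two derivatives. The differences are cosmetic: you obtain injectivity from a near-identity mean-value estimate on a convex neighborhood rather than from monotonicity of $v\mapsto X(-\Delta;x,v)$, and your shrinking of $U$ in (iii) supplies the uniform lower bound on $|g''|$ that the paper's proof tacitly assumes.
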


\begin{proof}
\emph{Step 1: expansion of the delay map.}
Along a solution, $\ddot x=g$, $\dddot x=g'\dot x$,
$x^{(4)}=g''\dot x^{2}+g'g$, and
$x^{(5)}=g'''\dot x^{3}+3g''g\,\dot x+(g')^{2}\dot x$. Taylor's theorem
in $s$ with integral remainder gives
\begin{equation}
\begin{aligned}
X(-\Delta;x,v)
  &= x-\Delta\,Q(x,v,\Delta),\\
Q
  &= v-\frac{\Delta}{2}g(x)+\frac{\Delta^{2}}{6}g'(x)\,v \\
  &\quad-\frac{\Delta^{3}}{24}
    \bigl(g''(x)v^{2}+g'(x)g(x)\bigr)
    +\Delta^{4}S(x,v,\Delta)
\end{aligned}
\label{eq:Q}
\end{equation}
where
$S(x,v,\Delta)=\int_0^1\frac{(1-\theta)^{4}}{4!}\,
 P\bigl(\varphi^{-\theta\Delta}(x,v)\bigr)\,d\theta$
with $P(x,v)=g'''(x)v^{3}+3g''(x)g(x)v+g'(x)^{2}v$.
Since $g\in C^{5}$, $P\in C^{2}(\mathbb R^{2})$ and the flow is $C^{5}$,
so $S$ and its first two derivatives in $(x,v)$ are bounded uniformly
on $K'\times[0,\Delta_0]$ for any compact neighborhood $K'$ of $K$.

\emph{Step 2: $H_\Delta$ is a diffeomorphism onto its image.}
From \eqref{eq:Q},
$\partial_v Q=1+\tfrac{\Delta^{2}}{6}g'(x)-\tfrac{\Delta^{3}}{12}g''(x)v
 +\Delta^{4}\partial_v S=1+O(\Delta^{2})$
uniformly on $K'$, so for $\Delta_0$ small $\partial_v Q\ge\tfrac12$ on
$K'\times(0,\Delta_0]$. Hence for each fixed $x$ the map
$v\mapsto X(-\Delta;x,v)=x-\Delta Q$ is strictly decreasing on the
$v$-slice of $K'$, which makes $H_\Delta$ injective on $K'$, and
$\det DH_\Delta=-\Delta\,\partial_v Q\neq0$. Thus $H_\Delta$ is a $C^{2}$
diffeomorphism of $K'$ onto its image, and $H_\Delta(\operatorname{int}K)$
is open. This proves (i).

\emph{Step 3: inversion.}
For $\mathbf y\in H_\Delta(K')$ put $q=(y_1-y_2)/\Delta$; then
$q=Q(y_1,v,\Delta)$ with $v=G_{1,\Delta}(\mathbf y)$. Because
$\partial_v Q\ge\tfrac12$, the implicit function theorem (with $\Delta$
as a parameter) yields $v=\mathcal V(y_1,q,\Delta)$, $C^{2}$ in
$(y_1,q)$ with derivatives bounded uniformly in $\Delta\in(0,\Delta_0]$
on the compact set $\mathcal K=\{(y_1,q)\}$ induced by $K'$ (note
$q=v+O(\Delta)$, so $\mathcal K$ is bounded). Let
$$
  v_3(y_1,q,\Delta)
  =q+\frac{\Delta}{2}g(y_1)-\frac{\Delta^{2}}{6}g'(y_1)\,q
   +\frac{\Delta^{3}}{24}g''(y_1)\,q^{2}
   -\frac{\Delta^{3}}{24}g'(y_1)g(y_1)
$$
be the third fixed-point iterate of $v=q+\tfrac{\Delta}{2}g
-\tfrac{\Delta^{2}}{6}g'v+\tfrac{\Delta^{3}}{24}(g''v^{2}+g'g)
-\Delta^{4}S(y_1,v,\Delta)$, and set $w=\mathcal V-v_3$. Substituting
$\mathcal V=v_3+w$ into this equation and collecting terms gives
$$
  \Bigl(1+\frac{\Delta^{2}}{6}g'(y_1)
        -\frac{\Delta^{3}}{24}g''(y_1)(\mathcal V+v_3)\Bigr)\,w
  =\Delta^{4}\,T(y_1,q,\Delta),
$$
where $T$ collects $S(y_1,\mathcal V,\Delta)$ together with the
$O(\Delta^{4})$ terms produced by the truncation of $v_3$; $T$ is a
composition of $C^{2}$ functions with uniform bounds, so $T$ and its
first two $(y_1,q)$-derivatives are uniformly bounded. Dividing by the
bracket, which is $1+O(\Delta^{2})$, yields
\begin{equation}
  G_{1,\Delta}(\mathbf y)
  =q+\frac{\Delta}{2}g(y_1)-\frac{\Delta^{2}}{6}g'(y_1)\,q
   +\frac{\Delta^{3}}{24}g''(y_1)\,q^{2}
   -\frac{\Delta^{3}}{24}g'(y_1)g(y_1)
   +\Delta^{4}R(y_1,q,\Delta),
  \label{eq:inverse-expansion}
\end{equation}
with $R\in C^{2}$ in $(y_1,q)$ and $R$, $\partial R$, $\partial^{2}R$
uniformly bounded on $\mathcal K\times(0,\Delta_0]$. Rewriting in
$(y_1,y_2)$,
\begin{align}
  G_{1,\Delta}(\mathbf y)
  ={}&\frac{y_1-y_2}{\Delta}+\frac{\Delta}{2}g(y_1)
      -\frac{\Delta}{6}g'(y_1)(y_1-y_2)
      +\frac{\Delta}{24}g''(y_1)(y_1-y_2)^{2}
      \notag\\
     &-\frac{\Delta^{3}}{24}g'(y_1)g(y_1)
      +\Delta^{4}R\!\Bigl(y_1,\frac{y_1-y_2}{\Delta},\Delta\Bigr).
  \label{eq:y-expansion}
\end{align}

\emph{Step 4: the mixed partial.}
The first, second, and fifth terms of \eqref{eq:y-expansion} are
additive and contribute nothing to $\partial^{2}_{y_1y_2}$. The third
and fourth contribute
$$
  \frac{\Delta}{6}g''(y_1)
  \quad\text{and}\quad
  -\frac{\Delta}{12}g''(y_1)-\frac{\Delta}{12}g'''(y_1)(y_1-y_2),
$$
respectively. Since $K$ has bounded velocity,
$y_1-y_2=\Delta q=O(\Delta)$ uniformly on $H_\Delta(K)$, so the last
piece is $O(\Delta^{2})$. For the remainder, the chain rule gives
$$
  \partial^{2}_{y_1y_2}\Bigl[\Delta^{4}R\Bigl(y_1,\tfrac{y_1-y_2}{\Delta},\Delta\Bigr)\Bigr]
  =-\Delta^{3}\,\partial_{qy_1}R-\Delta^{2}\,\partial_{qq}R
  =O(\Delta^{2}),
$$
using the uniform bounds on $\partial^{2}R$. Summing,
$\partial^{2}_{y_1y_2}G_{1,\Delta}
 =\tfrac{\Delta}{6}g''-\tfrac{\Delta}{12}g''+O(\Delta^{2})
 =\tfrac{\Delta}{12}g''(y_1)+O(\Delta^{2})$ uniformly on $H_\Delta(K)$,
which is (ii).

\emph{Step 5: consequences.}
(iii) If $|g''|\ge c>0$ on the $x$-projection of an open
$U\subset\operatorname{int}K$, then by (ii) there is $\Delta_0'$ such
that $|\partial^{2}_{y_1y_2}G_{1,\Delta}|\ge c\Delta/24>0$ on the open
set $H_\Delta(U)$ for $\Delta\le\Delta_0'$. Any $C^{2}$ function of the
form $\phi_1(y_1)+\phi_2(y_2)$ has identically vanishing mixed partial,
and if $G_{1,\Delta}$ agreed with such a sum on an open set the sum
would be $C^{2}$ there; hence no additive representation exists on
$H_\Delta(U)$.
(iv) If $g(x)=ax+b$, the vector field is affine, so $\varphi^{s}$,
$H_\Delta$, and $H_\Delta^{-1}$ (on the image, where it exists by (i))
are affine, and $G_{1,\Delta}=\pi_v\circ H_\Delta^{-1}$ is affine.
(v) In \eqref{eq:inverse-expansion} the first two terms are additive in
$(y_1,y_2)$, while $q$, $g$ and its derivatives, and $R$ are uniformly
bounded on $\mathcal K$, so the remaining terms are uniformly
$O(\Delta^{2})$ on $H_\Delta(K)$.
\end{proof}

\begin{remark}
The requirement $\operatorname{int}K\neq\emptyset$ in (iii) is not technical. 
On a curve $\{y_2=h(y_1)\}$ every function is trivially additive ($\phi_1=G\circ(\mathrm{id},h)$, $\phi_2=0$), so on the delay image of a single closed orbit of the conservative oscillator $\ddot x=g(x)$ the additive class always suffices, whatever $g$ is. 
The obstruction \eqref{eq:mixed} is visible only when the training data cover an open region of phase space---an ensemble of energies or transients---which is the regime in which the lifts of this paper are trained.
\end{remark}

On $\ddot{x}=-x^{3}$ ($g''=-6y_1$) the formula predicts $-\Delta\,y_1/2$; numerically inverting the flow and fitting $G_1$ locally gives $-0.049$ at $(y_1,\Delta)=(1,0.1)$ and $-0.197$ at $(1,0.4)$ against $-0.05$ and $-0.2$.
Lemma~\ref{lem:mixed} is the local form of the regime map: the same nonlinearity that closes the equation forces the lift out of the additive class, at a rate set by $\Delta$, and Lemma~\ref{lem:stencil} is the $\Delta\to0$ limit in which that departure disappears.

\begin{lemma}[The finite-difference regime]
\label{lem:stencil}
If $x\in C^{p+1}$ with $|x^{(p+1)}|\leq M$ on the window span, degree-$p$ polynomial interpolation on the window yields finite-difference weights $c^{(k)}_i$ with $\big|x^{(k)}(t)-\sum_i c^{(k)}_i\,x(t-i\tau)\big| \leq C_{p,m}\,M\, \tau^{\,p+1-k}$. 
Hence the distance from $G_k$ to the linear class---and a fortiori to the additive and compositional classes---is $O(\tau^{\,p+1-k})$ as $\tau\to 0$: at dense sampling all three classes are near-optimal, and nothing is gained from nonlinearity or depth.
\end{lemma}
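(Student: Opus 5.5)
The plan is to reduce the statement to the classical error bound for polynomial interpolation, and then read off the class-distance consequence from the inclusions among the three classes. Fix $t$ and the window nodes $s_i=t-i\tau$, $i=0,\dots,m-1$, with $m\ge p+1$. Choose $p+1$ of them (for definiteness $i=0,\dots,p$). Let $P$ be the degree-$p$ interpolant of $x$ at these nodes, written in Lagrange form $P(\sigma)=\sum_i x(s_i)\ell_i(\sigma)$. Set $c^{(k)}_i=\ell_i^{(k)}(t)$ and take $c^{(k)}_i=0$ for unused nodes.

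The key scaling step comes first. Writing $\sigma=t-u\tau$, each $\ell_i(\sigma)=L_i(u)$ with $L_i$ the Lagrange basis on the integer nodes $0,\dots,p$. Hence $c^{(k)}_i=(-\tau)^{-k}L_i^{(k)}(0)$, and the weights depend on $\tau$ only through this factor. They are therefore genuinely fixed, data-independent finite-difference weights.

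Next comes the error estimate. Let $e=x-P$, which vanishes at the $p+1$ nodes. By the standard Rolle argument, $e^{(k)}$ has at least $p+1-k$ zeros in the window span. Applying the Hermite–Genocchi/Rolle remainder to $e^{(k)}$ on those zeros, or equivalently differentiating the Cauchy remainder $e(\sigma)=\frac{x^{(p+1)}(\xi_\sigma)}{(p+1)!}\prod_i(\sigma-s_i)$ in the rescaled variable $u$, gives
\[
|e^{(k)}(t)|\le C_{p,m}\,M\,\tau^{p+1-k}.
\]
The constant depends only on the integer node configuration. This is the main technical point. Differentiating through $\xi_\sigma$ directly is awkward, so I would use the cleaner route of the derivative-interpolation error theorem. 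This theorem states that $e^{(k)}$ agrees with the error of interpolating $x^{(k)}$ at $p+1-k$ intermediate points in the window, and such an error is bounded by $M\,(\text{span})^{p+1-k}/(p+1-k)!$ times a node-dependent factor. The window span is $(m-1)\tau$, which yields the power $\tau^{p+1-k}$ with $C_{p,m}$ absorbing $(m-1)^{p+1-k}$ and the combinatorial factors. Since $x^{(k)}(t)=G_k(\mathbf y_t)$ by Proposition~\ref{prop:jet-ident}, the bound holds uniformly over the attractor whenever $M$ bounds $x^{(p+1)}=(\mathcal L_f^{p+1}h)$ along trajectories. This is automatic on compact $\mathcal A$ when $f,h\in C^{p+1}$.

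The final step is the class statement. The linear functional $\mathbf y\mapsto\sum_i c^{(k)}_i y_i$ is a member of the linear class, so $\|G_k-\text{linear class}\|_\infty\le C_{p,m}M\tau^{p+1-k}$. Every linear map is additive, since each $\phi_i(y_i)=c_iy_i$. Every additive map is in the compositional class: take a single outer identity edge and pad the remaining outer functions with zero. The inclusions $\text{linear}\subset\text{additive}\subset\text{compositional}$ then give the \emph{a fortiori} clause, because distance to a larger class is no larger. The closing remark, that nothing is gained from nonlinearity or depth, is then just the statement that all three distances share the same $O(\tau^{p+1-k})$ upper bound as $\tau\to0$.
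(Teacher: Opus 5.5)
The paper states Lemma~\ref{lem:stencil} without proof, as the classical interpolation-error estimate, so there is no argument of its own to compare against. Your proof is correct and fills that gap with the standard argument.

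Each step holds:
\begin{itemize}
\item The rescaling $\sigma=t-u\tau$ gives $c^{(k)}_i=(-\tau)^{-k}L_i^{(k)}(0)$. This is a single stencil, independent of the state.
\item $P^{(k)}$ has degree at most $p-k$ and agrees with $x^{(k)}$ at the $p+1-k$ distinct Rolle points $\eta_j\in[t-p\tau,t]$, so it is their unique interpolant. The Cauchy remainder then gives $|e^{(k)}(t)|\le M(p\tau)^{p+1-k}/(p+1-k)!$ with no extra node factor. For your choice of nodes the constant does not even depend on $m$.
\item Invariance of $\mathcal{A}$ keeps every window sample on the attractor. Hence $M=\sup_{\mathcal{A}}|\mathcal{L}_f^{p+1}h|$ serves all windows at once, and the bound becomes a sup-norm distance on $\Phi(\mathcal{A})$.
\item The inclusions linear $\subset$ additive $\subset$ compositional give the \emph{a fortiori} clause.
\end{itemize}

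One point of scope. The $m$ in $C_{p,m}$, and the paper's grouping of finite differences with Savitzky--Golay, suggest it also means a degree-$p$ least-squares fit to all $m$ samples. That fit does not interpolate, so your Rolle step does not apply to it.

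Polynomial reproduction covers both readings. Any stencil that is exact for $k$th derivatives of polynomials of degree $\le p$ recovers $x^{(k)}(t)$ from the degree-$p$ Taylor polynomial of $x$ at $t$. The error is therefore the stencil applied to the Taylor remainder:
$\bigl|\sum_i c^{(k)}_i R_p(t-i\tau)\bigr|\le M\tau^{p+1-k}\sum_i|\tilde c_i|\,i^{p+1}/(p+1)!$, where $\tilde c_i=(-\tau)^k c^{(k)}_i$ is independent of $\tau$ by your own scaling argument.

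This route is shorter and gives an $m$-dependent constant for any such stencil. Yours gives the sharper constant for the interpolating stencil. Either suffices for the existence claim the lemma makes.
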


\begin{definition}[Undersampled time series]
\label{def:undersampled}
A time series sampled at interval $\tau$ is undersampled for order $k$ when the truncation bound of Lemma~\ref{lem:stencil} is vacuous over any usable window---$\tau$ a non-negligible fraction of the dynamics' characteristic time---while the delay map remains an embedding with conditioning $\alpha_\Delta$ bounded away from zero (Proposition~\ref{prop:jet-stability}). 
In this regime $G_k$ exists and is stably invertible on the attractor, but only as a genuine composition: it is far from the linear and additive classes, and only the compositional class is guaranteed to contain it (Proposition~\ref{prop:rep-classes}). 
Reconstruction fails not when $\tau$ is large per se, but when $\alpha_\Delta\to0$ or the delay map loses injectivity (Remark~\ref{rem:harmonic}).
The \emph{aliasing wall} is the sampling interval at which the second condition fails: $\alpha_\Delta\to0$ or $\Phi$ ceases to be injective (Remark~\ref{rem:harmonic}).
\end{definition}

\begin{remark}[Certificates and scope]
\label{rem:certificates}
A general lower bound on the additive distance is delicate---$\Phi(\mathcal{A})$
is a $d$-dimensional submanifold of $\mathbb{R}^m$, so mixed-partial arguments
do not apply off the image---but it is measurable per system: with fixed spline
knots the additive model is linear in its parameters, its squared loss is
convex, and the fitted additive error in Table~\ref{tab:undersampled-jet} is
therefore (up to knot resolution and optimization tolerance) the class
distance rather than a training artifact. The linear rows are exact class
optima by construction (ridge regression). Three conventions and caveats
attach to the stability statements. Norms and singular values are taken with
per-order-normalized outputs and the ambient-induced metric on
$\mathcal{A}$; with heterogeneous derivative scales the conditioning claims
hold after this normalization, which the experiments apply throughout.
The perturbation bound~\eqref{eq:jet-stability} concerns the learned map on
a tubular neighborhood: ambient noise moves the delay vector off
$\Phi(\mathcal{A})$, so the off-manifold extension of $\widehat F$---shaped
in practice by training on noisy inputs, as in the field experiment
below---is part of the estimator, not an afterthought. Finally, the
propositions assume a smooth autonomous flow on a compact invariant
manifold: for fractal attractors the smooth-manifold hypotheses should be
replaced by the box-dimension embedding theory
of~\citet{sauer1991embedology}, under which $\Phi^{-1}$ is guaranteed
continuous rather than smooth; non-smooth dynamics (the Bouc--Wen memory
law) and forced systems (extend the state by the forcing phase) require the
correspondingly extended hypotheses.
\end{remark}

\paragraph{Existence and Kolmogorov--Arnold structure of the delay-to-jet map.}
Proposition~\ref{prop:jet-ident} and Corollary~\ref{cor:jet-chart} establish
that the lift exists and is a change of chart, and Lemma~\ref{lem:mixed}
shows that a one-layer (additive) KAN is generically too small to represent
it. The result below closes the argument from the other side, sharpening
Proposition~\ref{prop:rep-classes}(i). It shows that the delay-to-jet map
exists under injectivity of the delay map alone, that it is the transition
map between the delay and differential embeddings, that the
Kolmogorov--Arnold theorem makes this map, for all derivative orders at
once, exactly a two-layer KAN with a shared inner layer, and that two-layer spline KANs approximate it uniformly. 
The density statement is derived directly from the Kolmogorov--Arnold representation rather than assumed.

\medskip
\noindent\textit{Setting.}
Let $\varphi^{t}$ be the flow of $\dot s=f(s)$ on a manifold $M$ with
$f\in C^{p}$, let $\mathcal{A}\subset M$ be a compact invariant set of
box-counting dimension $d$, and let $h\in C^{p}(M,\mathbb{R})$ be the
observable, $x(t)=h(s(t))$. With the delay map $\Phi$ of
Proposition~\ref{prop:jet-ident} and the differential map
$\Theta(s)=\big(h(s),\,(\mathcal{L}_f h)(s),\,\dots,\,(\mathcal{L}_f^{\,p}h)(s)\big)$
of Corollary~\ref{cor:jet-chart}, we have
$x^{(k)}(t)=(\mathcal{L}_f^{\,k}h)(s(t))$, since
$\tfrac{d^k}{d\sigma^k}h(\varphi^{\sigma}s)\big|_{\sigma=0}
 =(\mathcal{L}_f^{\,k}h)(s)$. Write $\mathbf{y}_t=\Phi(s(t))$. We use two
hypotheses:
\begin{itemize}
  \item[(H1)] $\Phi$ is injective on $\mathcal{A}$. This holds for generic
    (prevalent) $(h,\tau)$ when $m\ge 2d+1$ for a smooth $\mathcal{A}$
    \citep{takens1981}, and when $m>2d$ for a fractal $\mathcal{A}$
    \citep{sauer1991embedology}.
  \item[(H2)] $\Theta$ is injective on $\mathcal{A}$ (generic for
    $p\ge 2d$, Corollary~\ref{cor:jet-chart}). Used only in part~(ii).
\end{itemize}
Unlike Proposition~\ref{prop:jet-ident}, part~(i) below requires only (H1),
not that $\Phi$ be an embedding, and so covers fractal attractors without
modification (cf.\ Remark~\ref{rem:certificates}).

\begin{proposition}[Kolmogorov--Arnold structure of the delay-to-jet map]
\label{prop:ka-delay-jet}
\leavevmode
\begin{enumerate}
  \item[(i)] \emph{Existence.} Under (H1) there is a unique continuous map
    $G=(G_0,\dots,G_p):\Phi(\mathcal{A})\to\mathbb{R}^{p+1}$ with
    \begin{equation}
      G\circ\Phi=\Theta,\qquad\text{i.e.}\qquad
      x^{(k)}(t)=G_k(\mathbf{y}_t)\quad\text{for all } t,\ k=0,\dots,p.
      \label{eq:ka-existence}
    \end{equation}
  \item[(ii)] \emph{Chart transition.} Under (H1) and (H2),
    $G:\Phi(\mathcal{A})\to\Theta(\mathcal{A})$ is a homeomorphism that
    conjugates the flow in delay coordinates to the flow in jet
    coordinates,
    \begin{equation}
      G\circ\bigl(\Phi\,\varphi^{t}\,\Phi^{-1}\bigr)
      =\bigl(\Theta\,\varphi^{t}\,\Theta^{-1}\bigr)\circ G .
      \label{eq:ka-conjugacy}
    \end{equation}
    If $\mathcal{A}$ is a smooth manifold and $\Phi,\Theta$ are
    embeddings, $G$ is a diffeomorphism, as in
    Proposition~\ref{prop:jet-stability}(i).
  \item[(iii)] \emph{Exact two-layer representation with a shared inner
    layer.} There exist a cube $K=[a,b]^m\supset\Phi(\mathcal{A})$ and
    continuous functions $\psi_{q,i}:[a,b]\to\mathbb{R}$ and
    $\Psi_{k,q}:\mathbb{R}\to\mathbb{R}$ such that for all
    $\mathbf{y}\in\Phi(\mathcal{A})$,
    \begin{equation}
      G_k(\mathbf{y})=\sum_{q=0}^{2m}\Psi_{k,q}\!\Bigl(\sum_{i=1}^{m}
        \psi_{q,i}(y_i)\Bigr),\qquad k=0,\dots,p .
      \label{eq:ka-exact}
    \end{equation}
    The inner functions $\psi_{q,i}$ do not depend on $k$; only the outer
    layer depends on the derivative order. The whole jet map is therefore
    a single two-layer KAN of shape $m\to 2m+1\to p+1$.
  \item[(iv)] \emph{Approximation by spline KANs.} For every
    $\varepsilon>0$ there is a two-layer KAN with spline edges and a
    shared inner layer,
    $\hat G_k(\mathbf{y})
     =\sum_{q}\hat\Psi_{k,q}\bigl(\sum_i\hat\psi_{q,i}(y_i)\bigr)$,
    such that
    \begin{equation}
      \sup_{\mathbf{y}\in\Phi(\mathcal{A})}\ \max_{0\le k\le p}\
      \bigl|\hat G_k(\mathbf{y})-G_k(\mathbf{y})\bigr|<\varepsilon .
      \label{eq:ka-approx}
    \end{equation}
\end{enumerate}
\end{proposition}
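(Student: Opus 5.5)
The plan is to prove the four parts in order, each resting on the previous one. The only genuinely delicate points are the passage from the compact set $\Phi(\mathcal{A})$ to a full cube, needed to invoke Kolmogorov--Arnold, and the uniform control of the outer functions in part (iv).

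\emph{Part (i).} Since $\mathcal{A}$ is compact and $\Phi$ is continuous (because $h$ and the flow are continuous), the map $\Phi:\mathcal{A}\to\Phi(\mathcal{A})$ is a continuous bijection from a compact space to a Hausdorff space. Under (H1) it is therefore a homeomorphism, and $\Phi^{-1}$ is continuous on $\Phi(\mathcal{A})$. I set $G=\Theta\circ\Phi^{-1}$. This map is continuous because each $\mathcal{L}_f^{\,k}h$ is continuous, which follows from $f,h\in C^{p}$. Uniqueness is forced: $\Phi$ is onto its image, so $G\circ\Phi=\Theta$ determines $G$ everywhere on $\Phi(\mathcal{A})$. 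The identity $x^{(k)}(t)=G_k(\mathbf{y}_t)$ then follows from the Lie-derivative identity stated in the Setting, exactly as in Proposition~\ref{prop:jet-ident}.

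\emph{Part (ii).} Under (H2), the same compactness argument makes $\Theta$ a homeomorphism onto $\Theta(\mathcal{A})$. Hence $G=\Theta\circ\Phi^{-1}$ is a homeomorphism, with inverse $\Phi\circ\Theta^{-1}$. The conjugacy \eqref{eq:ka-conjugacy} is verified by direct substitution, using that $\mathcal{A}$ is invariant so that $\varphi^{t}$ maps $\mathcal{A}$ into itself. In the smooth case, the diffeomorphism claim is Proposition~\ref{prop:jet-stability}(i).

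\emph{Part (iii).} First choose $[a,b]$ containing all coordinates of $\Phi(\mathcal{A})$; this is possible because the image is compact. Next extend each $G_k$ to a continuous function $\tilde G_k$ on the whole cube $K=[a,b]^m$ by the Tietze extension theorem, which applies since $\Phi(\mathcal{A})$ is closed in $K$. Then invoke the strong form of the Kolmogorov superposition theorem in the Lorentz--Sprecher version. In that form the inner functions are fixed continuous (indeed monotone) functions $\psi_{q,i}=\lambda_i\psi_q$ that are universal, meaning they do not depend on the function represented; only the outer function depends on it. Applying this to each $\tilde G_k$ with the same inner layer gives \eqref{eq:ka-exact} on $K$, hence on $\Phi(\mathcal{A})$, with only $\Psi_{k,q}$ depending on $k$. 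The shared inner layer is exactly this universality. I would cite Kolmogorov and Arnold together with the refinement of Lorentz and Sprecher, stressing that the original statement already fixes the inner functions independently of the represented function.

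\emph{Part (iv).} This is the main obstacle. Uniform approximation of the inner functions $\psi_{q,i}$ by splines is easy because they are continuous on $[a,b]$. However, the outer functions $\Psi_{k,q}$ are merely continuous, so composing them with perturbed inner sums requires uniform continuity on a compact range.

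The plan is as follows.
\begin{enumerate}
\item Let $I$ be a compact interval containing all inner sums $\sum_i\psi_{q,i}(y_i)$ for $\mathbf{y}\in K$, enlarged by $1$.
\item Each $\Psi_{k,q}$ is uniformly continuous on $I$. Choose $\delta\in(0,1)$ so that $|\Psi_{k,q}(u)-\Psi_{k,q}(u')|<\varepsilon/(2(2m+1))$ whenever $|u-u'|<\delta$, for all $k$ and $q$.
\item Choose splines $\hat\psi_{q,i}$ with $\|\hat\psi_{q,i}-\psi_{q,i}\|_\infty<\delta/m$ on $[a,b]$, by density of splines in $C[a,b]$. Keep them shared across $k$. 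Then every perturbed inner sum lies within $\delta$ of the true one and stays in $I$.
\item Choose splines $\hat\Psi_{k,q}$ within $\varepsilon/(2(2m+1))$ of $\Psi_{k,q}$ on $I$.
\item Apply the triangle inequality term by term over the $2m+1$ summands, maximize over the finitely many $k$, and take the supremum over $\Phi(\mathcal{A})$. This yields \eqref{eq:ka-approx}.
\end{enumerate}
All constants are uniform because there are finitely many edges and every domain is compact.
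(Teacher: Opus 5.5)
Your proposal is correct and follows the paper's proof essentially step for step: the compact-to-Hausdorff homeomorphism argument for (i)--(ii), Tietze extension to a cube followed by Kolmogorov's theorem with universal inner functions for (iii), and the same tolerance bookkeeping for (iv) (uniform continuity with $\delta\in(0,1)$, inner splines within $\delta/m$, outer splines within $\varepsilon/(2(2m+1))$, then the triangle inequality). The differences are cosmetic. You use a single enlarged interval $I$ for all hidden units where the paper uses per-unit intervals $I_q^{1}$. You also cite the Lorentz--Sprecher refinement and state the invariance of $\mathcal{A}$ used in the conjugacy explicitly, where the paper leaves these implicit.
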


\begin{proof}
We prove each part in turn.
\begin{enumerate}[label=(\roman*)]
\item  By (H1), $\Phi:\mathcal{A}\to\Phi(\mathcal{A})$ is a continuous
bijection from a compact space onto a Hausdorff space, hence a
homeomorphism, and $\Phi^{-1}:\Phi(\mathcal{A})\to\mathcal{A}$ is
continuous. Set $G:=\Theta\circ\Phi^{-1}$. Each component
$G_k=(\mathcal{L}_f^{\,k}h)\circ\Phi^{-1}$ is continuous because
$\mathcal{L}_f^{\,k}h$ is continuous for $f,h\in C^{p}$ and $k\le p$, and
$$
  x^{(k)}(t)=(\mathcal{L}_f^{\,k}h)(s(t))
  =(\mathcal{L}_f^{\,k}h)\bigl(\Phi^{-1}(\mathbf{y}_t)\bigr)
  =G_k(\mathbf{y}_t).
$$
Any $G'$ with $G'\circ\Phi=\Theta$ equals $\Theta\circ\Phi^{-1}$ on
$\Phi(\mathcal{A})$, which gives uniqueness. No condition on $\tau$ enters
beyond (H1).

\item Under (H2), $\Theta$ is likewise a homeomorphism onto
$\Theta(\mathcal{A})$, so $G=\Theta\circ\Phi^{-1}$ is a homeomorphism with
inverse $\Phi\circ\Theta^{-1}$. The conjugacy follows by substitution:
$$
  G\,\Phi\,\varphi^{t}\,\Phi^{-1}
  =\Theta\,\varphi^{t}\,\Phi^{-1}
  =\bigl(\Theta\,\varphi^{t}\,\Theta^{-1}\bigr)\bigl(\Theta\,\Phi^{-1}\bigr)
  =\bigl(\Theta\,\varphi^{t}\,\Theta^{-1}\bigr)\,G .
$$
In the smooth case, compositions and inverses of embeddings onto their
images are diffeomorphisms.

\item \textit{Step 1: extension to a cube.} $\Phi(\mathcal{A})$ is
compact, so it lies in some cube $K=[a,b]^m$ and is closed in $K$. By the
Tietze extension theorem each $G_k$ extends to a continuous
$\tilde G_k:K\to\mathbb{R}$ agreeing with $G_k$ on $\Phi(\mathcal{A})$.
This step is needed because the Kolmogorov--Arnold theorem applies to
continuous functions on a cube, whereas $G_k$ is defined only on the
(typically lower-dimensional, possibly fractal) set $\Phi(\mathcal{A})$.
The extension is not unique, but every extension agrees with $G_k$
wherever data can occur.

\textit{Step 2: Kolmogorov's theorem.} After rescaling $K$ to $[0,1]^m$,
the Kolmogorov superposition theorem
\citep{Kolmogorov1957, Arnold2009_Kolmogorov}, in its form with universal
inner functions, provides continuous $\psi_{q,i}$ ($q=0,\dots,2m$,
$i=1,\dots,m$), depending only on $m$, such that every continuous $g$ on
$[0,1]^m$ admits
$g(\mathbf{y})=\sum_{q=0}^{2m}\chi_q\bigl(\sum_i\psi_{q,i}(y_i)\bigr)$
with continuous outer functions $\chi_q$ depending on $g$. Applying this to
each $\tilde G_k$ with the same $\psi_{q,i}$ yields outer functions
$\Psi_{k,q}$; restricting to $\Phi(\mathcal{A})$ gives
\eqref{eq:ka-exact}.

\item Let $n_h=2m+1$ and $s_q(\mathbf{y})=\sum_i\psi_{q,i}(y_i)$,
whose range over $K$ is a compact interval $I_q$; let $I_q^{1}$ denote
$I_q$ enlarged by $1$ on each side.

\textit{Tolerance for the inner layer.} Each $\Psi_{k,q}$ is uniformly
continuous on $I_q^{1}$, so there is $\delta\in(0,1)$ such that for all
$k,q$,
$$
  |u-v|<\delta\ \Longrightarrow\
  |\Psi_{k,q}(u)-\Psi_{k,q}(v)|<\frac{\varepsilon}{2n_h}.
$$

\textit{Inner layer.} Piecewise-linear interpolants, and hence splines of degree $d$ are dense in $C[a,b]$. 
Choose splines $\hat\psi_{q,i}$ with $\|\hat\psi_{q,i}-\psi_{q,i}\|_\infty<\delta/m$. 
Then $\hat s_q=\sum_i\hat\psi_{q,i}$ satisfies $|\hat s_q-s_q|<\delta$, so $\hat s_q(\mathbf{y})\in I_q^{1}$.

\textit{Outer layer.} Choose splines $\hat\Psi_{k,q}$ on $I_q^{1}$ with
$\|\hat\Psi_{k,q}-\Psi_{k,q}\|_{\infty,I_q^{1}}<\varepsilon/(2n_h)$.

\textit{Combination.} For $\mathbf{y}\in\Phi(\mathcal{A})$,
$$
  \bigl|\hat\Psi_{k,q}(\hat s_q)-\Psi_{k,q}(s_q)\bigr|
  \le\underbrace{\bigl|\hat\Psi_{k,q}(\hat s_q)-\Psi_{k,q}(\hat s_q)\bigr|}_{<\,\varepsilon/2n_h}
  +\underbrace{\bigl|\Psi_{k,q}(\hat s_q)-\Psi_{k,q}(s_q)\bigr|}_{<\,\varepsilon/2n_h}.
$$
Summing over the $n_h$ hidden units gives $|\hat G_k-G_k|<\varepsilon$. The
inner splines were chosen independently of $k$, so the inner layer remains
shared.
\end{enumerate}
\end{proof}

\begin{corollary}[Lift error propagates linearly to the discovered law]
\label{cor:ka-residual}
Suppose the system is $n$th-order in the observable, $n\le p$, so that
Eq.~\eqref{eq:nth-order} holds, $x^{(n)}=F\bigl(x,x',\dots,x^{(n-1)}\bigr)$,
with a continuous closure $F$ (namely
$F=(\mathcal{L}_f^{\,n}h)\circ\Theta_{n-1}^{-1}$ when the $(n-1)$-jet map
$\Theta_{n-1}$ is injective on $\mathcal{A}$; this is the closure of the
companion form in Proposition~\ref{prop:jet-stability}(ii)). If $F$ is
$L$-Lipschitz and the lift satisfies $\|\hat G-G\|_\infty<\varepsilon$ on
$\Phi(\mathcal{A})$, then along every trajectory
\begin{equation}
  \Bigl|\hat G_n(\mathbf{y}_t)
    -F\bigl(\hat G_0(\mathbf{y}_t),\dots,\hat G_{n-1}(\mathbf{y}_t)\bigr)\Bigr|
  \le\bigl(1+L\sqrt{n}\bigr)\,\varepsilon .
  \label{eq:ka-residual}
\end{equation}
\end{corollary}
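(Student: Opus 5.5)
The plan is to compare the learned residual with the exact identity that the true lift satisfies, and then split the difference by the triangle inequality. By Proposition~\ref{prop:ka-delay-jet}(i), the true lift obeys $G_k(\mathbf{y}_t)=x^{(k)}(t)$ for every $k\le p$. Since the system is $n$th-order in the observable, Eq.~\eqref{eq:nth-order} gives the exact identity
\[
  G_n(\mathbf{y}_t)=F\bigl(G_0(\mathbf{y}_t),\dots,G_{n-1}(\mathbf{y}_t)\bigr)
\]
along every trajectory. Equivalently, $\mathcal{L}_f^{\,n}h=F\circ\Theta_{n-1}$ on $\mathcal{A}$, and composing with $\Phi^{-1}$ gives the same statement. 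Subtracting this zero quantity from the left-hand side of \eqref{eq:ka-residual} and adding and subtracting $G_n(\mathbf{y}_t)$, I would bound the residual by
\[
  \bigl|\hat G_n(\mathbf{y}_t)-G_n(\mathbf{y}_t)\bigr|
  +\Bigl|F\bigl(G_0,\dots,G_{n-1}\bigr)-F\bigl(\hat G_0,\dots,\hat G_{n-1}\bigr)\Bigr|,
\]
with all lifts evaluated at $\mathbf{y}_t$.

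The first term is below $\varepsilon$ because $\mathbf{y}_t\in\Phi(\mathcal{A})$ and the sup bound on $\hat G-G$ is taken over all components. For the second term I would use the $L$-Lipschitz property of $F$ with respect to the Euclidean norm on $\mathbb{R}^n$. The argument-difference vector $\bigl(\hat G_k-G_k\bigr)_{k=0}^{n-1}$ has $n$ entries, each of size below $\varepsilon$, so its Euclidean norm is below $\sqrt{n}\,\varepsilon$. The second term is therefore at most $L\sqrt{n}\,\varepsilon$, and summing the two terms yields \eqref{eq:ka-residual}.

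The main obstacle is the domain of $F$. The closure $F=(\mathcal{L}_f^{\,n}h)\circ\Theta_{n-1}^{-1}$ is a priori defined only on $\Theta_{n-1}(\mathcal{A})$. The perturbed point $\bigl(\hat G_0,\dots,\hat G_{n-1}\bigr)(\mathbf{y}_t)$ generally lies off that set, so the Lipschitz estimate needs $F$ to be defined there.

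I would handle this by reading the hypothesis ``$F$ is $L$-Lipschitz'' as Lipschitz on a set containing both points, for instance on $\mathbb{R}^n$ or on an $\varepsilon\sqrt{n}$-neighborhood of $\Theta_{n-1}(\mathcal{A})$. I would also note that this costs nothing. A Lipschitz function on the compact set $\Theta_{n-1}(\mathcal{A})$ always extends to $\mathbb{R}^n$ with the same constant, by the McShane extension $\inf_{\mathbf{z}'}\bigl(F(\mathbf{z}')+L\|\mathbf{z}-\mathbf{z}'\|\bigr)$. Any such extension agrees with $F$ wherever true data can occur, which mirrors the Tietze step in Proposition~\ref{prop:ka-delay-jet}(iii).

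Finally, I would remark that the injectivity of $\Theta_{n-1}$ is used only to make $F$ well-defined as a closure. The estimate itself uses nothing beyond the exact identity and the triangle inequality, so the constant $1+L\sqrt{n}$ is sharp up to the choice of norm. If one uses the $\ell^\infty$ norm on the arguments, the same argument gives the bound $(1+L)\varepsilon$ instead.
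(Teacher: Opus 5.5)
Your proposal is correct and matches the paper's proof: insert the exact identity $G_n(\mathbf{y}_t)=F(G_0(\mathbf{y}_t),\dots,G_{n-1}(\mathbf{y}_t))$ and apply the triangle inequality, bounding the first piece by $\varepsilon$ and the second by $L\|(\hat G_k-G_k)_{k<n}\|_2\le L\sqrt{n}\,\varepsilon$. You also note that $F$ must be defined and $L$-Lipschitz off $\Theta_{n-1}(\mathcal{A})$, which a McShane extension supplies at no cost; the paper's one-line proof leaves this point implicit.
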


\begin{proof}
Insert $G_n(\mathbf{y}_t)=F\bigl(G_0(\mathbf{y}_t),\dots,G_{n-1}(\mathbf{y}_t)\bigr)$
and apply the triangle inequality. 
The first term is bounded by $\varepsilon$; the second by $L\,\bigl\|(\hat G_k-G_k)_{k<n}\bigr\|_2\le L\sqrt{n}\,\varepsilon$.
\end{proof}

\begin{remark}[Interpretation]
\label{rem:ka-interpretation}
Proposition~\ref{prop:ka-delay-jet} supplies the logical chain behind the
lifting layer.
\emph{Existence} (i) makes the lift well-posed at any $\tau$ for which
(H1) holds; where (H1) fails, no architecture can succeed, which is the
content of the aliasing wall of Definition~\ref{def:undersampled}
(Table~\ref{tab:undersampled-jet} at $\Delta t=0.8$) and of the Bouc--Wen
memory variable seen through $(x,\dot x)$ (Note~\ref{sec:supp-boucwen}).
\emph{Chart transition} (ii) shows that the flow read in jet coordinates
is conjugate to the observed flow, and in those coordinates the vector
field reduces to the single closure of
Proposition~\ref{prop:jet-stability}(ii); this is what licenses reading
the equation layer's edges as the governing law.
\emph{Representation} (iii) shows that depth two is sufficient for every
system satisfying (H1), whatever the nonlinearity or sampling rate, and
that one inner layer can serve all $p+1$ derivative orders, which is the
jet operator of Eq.~\eqref{eq:jetop} in its compositional (two-layer)
form. 
% Lemma~\ref{lem:mixed} supplies the matching lower bound, that one
% layer is generically insufficient, so the two results together fix the depth at exactly two. 
Lemma~\ref{lem:mixed} supplies the complementary lower bound by showing
that a single additive layer can be insufficient. Thus, two layers are
sufficient in general and become necessary within the nonseparable regime
identified by the lemma.
Corollary~\ref{cor:ka-residual} connects lift
accuracy to the residual of the discovered law.
\end{remark}

\begin{remark}[Scope]
\label{rem:ka-scope}
The inner functions of the Kolmogorov--Arnold theorem are continuous in general, but far from smooth and often ``Cantor-like'' \citep{girosi1989representation}.
Part~(iii) is therefore an existence statement about architecture;
part~(iv) is what a trained spline KAN realizes, and it inherits no convergence rate from the theorem. 
Rates must come from the smoothness of $G$ (Proposition~\ref{prop:jet-stability}) and from spline approximation theory \citep{lai2021kolmogorov}. 
In particular, the width used in the experiments ($12$ for $m=7$) is a capacity choice, not the theorem's $2m+1$ with its pathological inner functions, as noted in Proposition~\ref{prop:rep-classes}(i).  
Corollary~\ref{cor:ka-residual} assumes a Lipschitz closure and exact targets; measurement noise enters separately through Proposition~\ref{prop:jet-stability}(iv).
\end{remark}

\paragraph{From representation classes to measurement.}
The results above establish which model classes can represent the lift, but not how large the gaps between them are in practice. Proposition~\ref{prop:ka-delay-jet} is an existence statement with no rate (Remark~S3.14), Lemma~\ref{lem:mixed} is asymptotic in $\Delta$, and the distance to the additive class can only be measured system by system (Remark~\ref{rem:certificates}). 
Taken together, however, they make three predictions that can be checked directly as the sampling interval grows:
\begin{enumerate}[label=(\roman*)]
\item at dense sampling, the linear class is already near-exact and nothing is
  gained from nonlinearity or depth (Lemma~\ref{lem:stencil});
\item in the undersampled regime, the linear and additive classes fall away
  while the compositional class remains accurate (Lemma~\ref{lem:mixed},
  Proposition~\ref{prop:ka-delay-jet}(iii));
\item beyond the aliasing wall, every class fails, because the window no longer
  determines the state (Definition~\ref{def:undersampled}).
\end{enumerate}
We test these on the Lorenz system observed through $x(t)$. 
Its strong nonlinearity makes the departure from additivity large, and its hidden states supply exact derivative targets. 
Along the same sweep, we evaluate the conditioning certificate of Proposition~\ref{prop:rep-classes}(iii). We then repeat the comparison on a coarsely sampled, noisy sine-Gordon field, where the delay window becomes a space--time patch.

\paragraph{Protocol.}
The Lorenz system is observed through the scalar $x(t)$ alone (hidden $y,z$),
noise-free, so the probe isolates representation capacity from estimation
error. Targets are exact, $x' = \sigma(y-x)$ and $x'' = \sigma(\dot y - x')$,
evaluated from the simulator's hidden states at the center of each window of
$m=7$ consecutive samples at spacing $\Delta t$. Three model classes fit the
same windows: an optimal linear map (ridge regression on the window), a
one-layer KAN (the additive class of the jet operator, Eq.~\eqref{eq:jetop}),
and a two-layer KAN of hidden width $12$. Training uses one trajectory;
evaluation uses an independent trajectory from a different initial condition,
both burned onto the attractor. KAN fits are repeated over three seeds and
reported as the mean; the linear map is deterministic. Sweeping
$\Delta t$ from $0.01$ to $0.80$---from dense sampling, through roughly three
samples per orbit turn at $0.25$, to less than one at $0.80$---moves the task
from the finite-difference regime, through the flow-inversion regime, to the aliasing
wall of Definition~\ref{def:undersampled}.

\begin{table}[t]
  \centering
  \begin{tabular}{llccc}
    \toprule
    $\Delta t$ & Target & linear & additive (1-layer) & compositional (2-layer) \\
    \midrule
    $0.01$ & $x'$  & $1.0000$ & $0.9849\;[0.9561,\,0.9996]$ & $1.0000\;[1.0000,\,1.0000]$ \\
           & $x''$ & $1.0000$ & $0.9721\;[0.9692,\,0.9737]$ & $0.9992\;[0.9991,\,0.9993]$ \\
    \midrule
    $0.05$ & $x'$  & $1.0000$ & $0.9998\;[0.9997,\,0.9999]$ & $1.0000\;[1.0000,\,1.0000]$ \\
           & $x''$ & $1.0000$ & $0.9986\;[0.9982,\,0.9988]$ & $1.0000\;[0.9999,\,1.0000]$ \\
    \midrule
    $0.15$ & $x'$  & $0.8967$ & $0.9712\;[0.9545,\,0.9851]$ & $0.9997\;[0.9997,\,0.9997]$ \\
           & $x''$ & $0.9279$ & $0.9756\;[0.9690,\,0.9877]$ & $0.9994\;[0.9992,\,0.9996]$ \\
    \midrule
    $0.25$ & $x'$  & $0.4436$ & $0.8986\;[0.8463,\,0.9326]$ & $\mathbf{0.9940}\;[0.9931,\,0.9949]$ \\
           & $x''$ & $0.6351$ & $0.9008\;[0.8512,\,0.9867]$ & $\mathbf{0.9963}\;[0.9953,\,0.9969]$ \\
    \midrule
    $0.40$ & $x'$  & $0.0642$ & $0.5718\;[0.5695,\,0.5741]$ & $\mathbf{0.8810}\;[0.8761,\,0.8852]$ \\
           & $x''$ & $0.3428$ & $0.9056\;[0.8682,\,0.9671]$ & $\mathbf{0.9699}\;[0.9675,\,0.9722]$ \\
    \midrule
    $0.60$ & $x'$  & $0.0026$ & $0.2062\;[0.1970,\,0.2126]$ & $\mathbf{0.6097}\;[0.5363,\,0.7012]$ \\
           & $x''$ & $0.2538$ & $0.9216\;[0.9004,\,0.9593]$ & $0.9315\;[0.9227,\,0.9362]$ \\
    \midrule
    $0.80$ & $x'$  & $-0.0266$ & $-0.2900\;[-0.3131,\,-0.2621]$ & $0.2132\;[0.1019,\,0.3844]$ \\
           & $x''$ & $0.2958$ & $0.8852\;[0.8632,\,0.9273]$ & $0.8675\;[0.8496,\,0.8864]$ \\
    \bottomrule
  \end{tabular}
  \caption{Held-out $R^2$ for the window-to-jet map on the Lorenz observable
    versus sampling interval $\Delta t$ ($m=7$, noiseless, independent test
    trajectory). KAN entries are seed means with $[\min,\max]$ over three
    seeds; the linear map is deterministic. Dense sampling: the linear
    map is exact and all classes tie. Coarse sampling: the linear map
    collapses, the additive (one-layer) class plateaus, and only the
    compositional (two-layer) map remains near-exact---and is also the most
    seed-stable.}
  \label{tab:undersampled-jet}
\end{table}

\paragraph{Results.}
Table~\ref{tab:undersampled-jet} shows three regimes. At $\Delta t \le 0.05$ the linear map is exact
($R^2 = 1.000$) and every class ties, confirming that with dense data the
lift gains nothing from nonlinearity---differentiation is linear there, and
the one-layer jet of Eq.~\eqref{eq:jetop} is the appropriate model. At
$\Delta t = 0.25$ the ordering is graded and large: the linear map collapses,
the additive class improves on it but plateaus, and the compositional map
stays near-exact on both derivative orders. In residual terms, at
$\Delta t = 0.25$ the two-layer map reduces $1-R^2$ on $x''$ from $0.365$
(linear) and $0.099$ (additive) to $0.0037$---a factor of $27$ over the
additive class and nearly $100$ over the linear map. The additive class beating
the optimal linear map shows that nonlinearity already helps; only
composition closes the gap, as Eq.~\eqref{eq:comp-map} predicts. Extending
the sweep locates the lower boundary: the compositional advantage persists
through $\Delta t=0.6$ ($x'$: $0.61$ versus $0.21$ additive and $0.00$
linear), and by $\Delta t=0.8$---below one sample per orbit turn---every
class fails on $x'$ (best $R^2=0.21$): the aliasing wall of
Definition~\ref{def:undersampled}, where no representation can recover
information the samples no longer carry.

\begin{table}[t]
  \centering
  \begin{tabular}{lcccc}
    \toprule
    $\Delta t$ & med.\ $\sigma_{\min}$ & p5 $\sigma_{\min}$
               & med.\ $\kappa_F$ & compositional $R^2$ ($x'$) \\
    \midrule
    $0.01$ & $0.62$ & $0.38$ & $7.3$ & $1.000$ \\
    $0.05$ & $0.62$ & $0.36$ & $1.6$ & $1.000$ \\
    $0.15$ & $0.41$ & $0.08$ & $2.5$ & $1.000$ \\
    $0.25$ & $0.39$ & $0.12$ & $3.2$ & $0.994$ \\
    $0.40$ & $0.51$ & $0.12$ & $4.0$ & $0.881$ \\
    $0.60$ & $0.63$ & $0.16$ & $4.5$ & $0.610$ \\
    $0.80$ & $0.73$ & $0.16$ & $4.2$ & $0.213$ \\
    \bottomrule
  \end{tabular}
  \caption{Tangent-space conditioning of the learned chart change
    (Proposition~\ref{prop:rep-classes}(iii)): median and 5th-percentile
    $\sigma_{\min}(J_F U_{\mathbf{y}})$ and median $\kappa_F$ over $300$
    held-out windows, seed means, per-order-normalized units, tangent basis
    from local PCA ($k=40$, $d=2$). $\kappa_F$ is U-shaped in $\Delta t$
    (delay redundancy at dense sampling, observability loss at coarse), while
    the invertibility certificate stays positive even where reconstruction
    fails.}
  \label{tab:jet-conditioning}
\end{table}

\paragraph{Conditioning of the learned chart change.}
Proposition~\ref{prop:rep-classes}(iii) makes the trained lift auditable. 
At $300$ held-out windows we assemble the joint Jacobian of the fitted chart change---the exact order-$0$ projection row stacked with the analytic gradients of the fitted $x'$ and $x''$ maps, in per-order-normalized units---restrict it to a tangent basis $U_{\mathbf{y}}$ estimated by local principal components, and report $\sigma_{\min}(J_F U_{\mathbf{y}})$ and $\kappa_F$ (Table~\ref{tab:jet-conditioning}). Two findings. 
First, $\kappa_F$ is U-shaped in $\Delta t$ with its minimum near $\Delta t=0.05$: at dense sampling the delay coordinates are nearly redundant---the classical too-short-delay degeneracy, $\alpha_\Delta$ small---while at coarse sampling observability degrades, so noise amplification is governed at both extremes by the conditioning of the delay embedding, the mechanism of Proposition~\ref{prop:jet-stability}(iii). 
Second, the invertibility certificate stays bounded away from zero even at $\Delta t=0.8$, where reconstruction of $x'$ nonetheless fails. 
Local nonsingularity of the learned map is therefore necessary but not sufficient: at the aliasing wall the failure is global---the true chart change loses injectivity and the approximation term of
Eq.~\eqref{eq:jet-stability} dominates---and a pointwise Jacobian audit cannot see it. 
%(The local-PCA tangent estimate itself also degrades once the image stops being locally two-dimensional.)

\paragraph{Scope.}
This is a supervised representation probe, in the same sense as the Bouc--Wen state probe of Note~\ref{sec:supp-boucwen}: targets use the simulator's hidden states, so the experiment measures what each class can represent. 
It identifies the two regimes in which a compositional lift is warranted---undersampled observations, where the window must invert the flow, and hidden non-derivative states, where the window must reconstruct a coordinate (Note~\ref{sec:supp-boucwen})---and the regime in which the single-layer jet is already the right tool, namely densely sampled observables, which covers the calibration systems examined.

\paragraph{Field-side discovery under joint undersampling and noise
(sine-Gordon).}
The same class separation governs field data, where the delay window becomes a space--time patch. Undersampling alone does not produce it: on the noiseless kink--antikink family, an interpolating tensor spline recovers $u_{tt}$ to NRMSE $0.015$ even from $33$ frames, because piecewise-quintic interpolation converges rapidly on an analytic field. 
Noise closes that escape: it forbids interpolation, and the smoothing bias--variance tradeoff then binds every linear front end. 
We therefore test discovery of $u_{tt}=c^2 u_{xx} + s\,\sin(u)$ (true $c^2=1$, $s=-1$) on a coarse, noisy field: $80$ spatial points ($\Delta x = 0.46$ against a kink width of $\approx 0.75$), $33$ frames ($\Delta t = 0.5$), and $1\%$ Gaussian noise.
Patch operators ($5\times5$ coarse samples $\to$ $(u, u_{xx}, u_{tt})$ at the patch center; one- versus two-layer KAN) are calibrated on exact solutions at pooled velocities $v\in\{0.40,0.45,0.50\}$ observed through the same coarse noisy grid, and deployed on the unseen velocity $v=0.55$ with fresh noise; exact solution values are used for scoring only. 
The classical comparator is the paper's own analytic-spline jet (Eq.~\eqref{eq:fieldjet}) fit to the coarse noisy field, with its smoothing parameter swept against ground truth on an oracle-tuned linear baseline.

\begin{table}[t]
  \centering
  \begin{tabular}{llcccc}
    \toprule
    Regime & Front end & NRMSE $u_{xx}$ & NRMSE $u_{tt}$ & $\hat{c}^2$ & $\hat{s}$ \\
    \midrule
    coarse, $1\%$ noise
      & analytic spline (oracle) & $0.280$ & $0.342$ & $0.691$ & $-0.514$ \\
      & additive (1-layer)       & $0.099$ & $0.283$ & $0.954$ & $-0.989$ \\
      & compositional (2-layer)  & $\mathbf{0.047}$ & $\mathbf{0.087}$
                                 & $\mathbf{0.985}$ & $\mathbf{-0.970}$ \\
    \midrule
    dense, $1\%$ noise
      & analytic spline (oracle) & $0.072$ & $0.147$ & $0.977$ & $-0.959$ \\
      & additive (1-layer)       & $0.304$ & $0.693$ & $0.636$ & $-0.547$ \\
      & compositional (2-layer)  & $0.135$ & $0.274$ & $0.981$ & $-0.973$ \\
    \bottomrule
  \end{tabular}
  \caption{Sine-Gordon derivative accuracy and discovered coefficients
    (least squares on $[u_{xx}, \sin u]$; true $(c^2,s)=(1,-1)$) on the
    held-out velocity, means over three seeds (noise realization and network
    initialization vary jointly). Coarse: $80\times33$ samples; dense:
    $400\times257$. The spline's smoothing is oracle-tuned per realization.}
  \label{tab:sg-undersampled}
\end{table}

\begin{figure}[t]
  \centering
  \includegraphics[width=\linewidth]{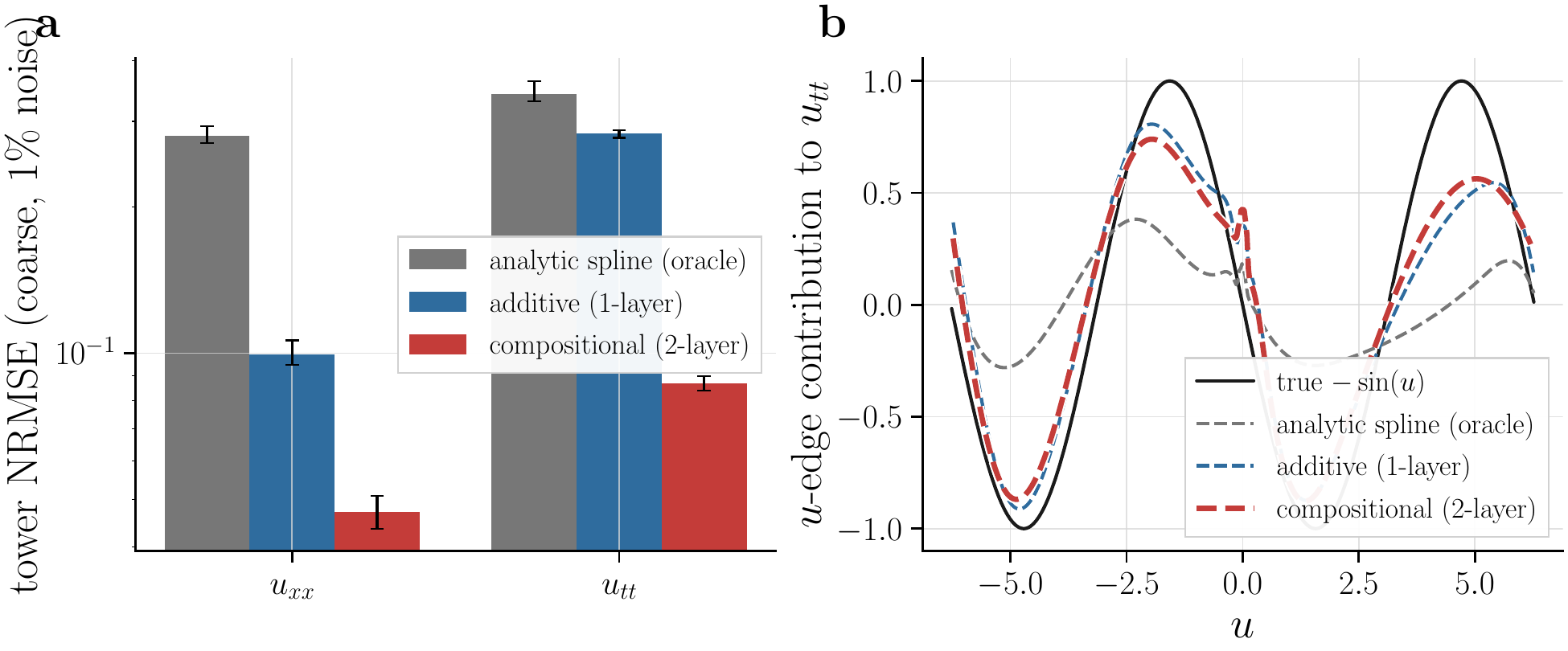}
  \caption{Coarse, noisy sine-Gordon. 
  (a) Derivative NRMSE per front end (bars: seed mean; whiskers: min--max).
  (b) The library-free $u$-edge of the equation layer against the true $-\sin(u)$, per front end (first seed).}
  \label{fig:sg-undersampled}
\end{figure}

Table~\ref{tab:sg-undersampled} and Fig.~\ref{fig:sg-undersampled} show three things. 
First, on the coarse noisy field the oracle-tuned spline fails discovery: errors-in-variables attenuation from its derivative estimates (NRMSE $0.28$--$0.34$) biases the recovered law by $31\%$ on $c^2$ and $49\%$ on $s$, and no smoothing choice repairs this. Both learned patch operators restore discovery; the compositional map has the cleanest derivatives ($u_{tt}$ NRMSE $0.087$, three times below the additive map and roughly four times below the oracle spline) and the tightest, most seed-stable coefficients. 
With the library-free equation layer the compositional front end recovers, on its best seed, $\hat{c}^2=1.00$ and $\hat{s}=-0.95$ with the $u$-edge matching $-\sin(u)$ at correlation $0.999$ (seed-mean correlation $0.980$; the spread comes from the equation-layer fit, whose collinear-edge variance is a back-end property independent of the front end). 
Second, the additive map survives on coefficients ($\hat{c}^2=0.95$) despite its three-fold worse derivative error: with dense-enough spatial context, $u_{tt}$ can be written as a fixed linear combination of the patch samples plus a univariate function of $u$, which is additive, and its residual errors decorrelate from the regressors. 
The separation between the classes is therefore carried by derivative fidelity and stability rather than by outright failure---weaker than the scalar case, as the additive route predicts. Third, the dense control flips the ordering: with $257$ frames the global spline is the best front end ($u_{tt}$ NRMSE $0.147$) and the small-patch operators are handicapped by their receptive field under noise (the additive map fails outright). 
The regime map runs in both directions---global linear smoothing is the right tool for dense noisy fields, and the compositional patch operator is the only front end that keeps discovery accurate on coarse noisy ones.

%\clearpage
\section{Controlled comparison on the Genesio--Tesi system: front ends across sampling interval}
\label{sec:supp-controlled}

The main-text benchmarks vary the derivative estimate and the equation representation together. This note separates them on the third-order Genesio--Tesi system $\dddot{x}=-a\ddot{x}-b\dot{x}-cx+qx^{2}$ ($a=1.2$, $b=2.92$, $c=6$, $q=1$), observed through $x$ alone, as the sampling interval $\Delta$ is coarsened from $0.05$ to $0.9$ (the attractor's half-period is ${\approx}1.9$, so $\Delta=0.9$ is about two samples per swing). The protocol has a dense calibration trajectory (one initial condition, $\Delta t=0.01$, $600$ time units) whose derivative targets are Savitzky--Golay estimates of the observable itself---no simulator state enters---and a coarse deployment trajectory from a different initial condition on which only samples at spacing $\Delta$ exist and discovery is run; exact derivatives are used for scoring only. Five front ends produce $(\hat{x}',\hat{x}'',\hat{x}''')$ on the deployment trajectory from windows of $m=7$ coarse samples: Savitzky--Golay on the coarse samples with window and polynomial order tuned per derivative against ground truth (an oracle-tuned fixed formula); the optimal linear map (ridge regression on the window); the additive class solved exactly (least squares on a per-coordinate cubic B-spline basis, no optimizer); an additive (one-layer) KAN trained by Adam; and a compositional (two-layer) KAN of hidden width $12$. The deployment trajectory is noise-free; the comparison isolates sampling interval, not noise. Three backends consume each front end's derivatives unchanged: least squares on the oracle library $[x,\dot{x},\ddot{x},x^{2}]$, STLSQ on the generic degree-two polynomial library in $(x,\dot{x},\ddot{x})$ (ten terms, threshold $0.3$), and the library-free KANDy equation layer over $(x,\dot{x},\ddot{x})$. KAN fits use three seeds; the linear and exact-additive maps are deterministic.

\begin{table}[h]
  \centering\small
  \begin{tabular}{lcccccc}
    \toprule
    & \multicolumn{6}{c}{sampling interval $\Delta$} \\
    front end & $0.05$ & $0.1$ & $0.2$ & $0.4$ & $0.6$ & $0.9$ \\
    \midrule
    \multicolumn{7}{l}{\emph{held-out $R^{2}$ of $\hat{x}'''$}} \\
    Savitzky--Golay (oracle-tuned) & $1.000$ & $1.000$ & $1.000$ & $0.996$ & $0.959$ & $0.788$ \\
    linear (ridge on window)       & $1.000$ & $1.000$ & $1.000$ & $1.000$ & $0.994$ & $0.879$ \\
    additive, exact LS             & $1.000$ & $1.000$ & $1.000$ & $1.000$ & $1.000$ & $1.000$ \\
    additive, one-layer KAN        & $0.987$ & $0.996$ & $1.000$ & $1.000$ & $1.000$ & $1.000$ \\
    compositional, two-layer KAN   & $0.999$ & $1.000$ & $1.000$ & $1.000$ & $1.000$ & $1.000$ \\
    \midrule
    \multicolumn{7}{l}{\emph{largest relative coefficient error over $(a,b,c,q)$, oracle-library least squares}} \\
    Savitzky--Golay (oracle-tuned) & $0.000$ & $0.001$ & $0.011$ & $0.131$ & $0.412$ & $0.819$ \\
    linear (ridge on window)       & $0.000$ & $0.000$ & $0.000$ & $0.002$ & $0.034$ & $0.509$ \\
    additive, exact LS             & $0.000$ & $0.000$ & $0.000$ & $0.000$ & $0.000$ & $0.000$ \\
    additive, one-layer KAN        & $0.064$ & $0.016$ & $0.005$ & $0.001$ & $0.002$ & $0.009$ \\
    compositional, two-layer KAN   & $0.002$ & $0.001$ & $0.000$ & $0.000$ & $0.000$ & $0.000$ \\
    \midrule
    \multicolumn{7}{l}{\emph{STLSQ on the generic degree-two library: terms missed / spurious}} \\
    Savitzky--Golay (oracle-tuned) & $0/0$ & $0/0$ & $0/0$ & $0/0$ & $0/0$ & $3/0$ \\
    all learned front ends         & $0/0$ & $0/0$ & $0/0$ & $0/0$ & $0/0$ & $0/0$ \\
    \bottomrule
  \end{tabular}
  \caption{Genesio--Tesi discovery from a coarsely sampled scalar: the front end is what fails. Seed means over three seeds for the KAN maps; the exact-additive and linear maps are deterministic. At $\Delta=0.9$ the oracle-tuned Savitzky--Golay tower misses three of the four terms under STLSQ and its least-squares coefficients are wrong by up to $82\%$; every learned lift recovers all four terms, the exact-additive and compositional lifts to within $0.1\%$. %The one-layer KAN's residual error at dense sampling ($6\%$ on $a$ at $\Delta=0.05$) is an optimizer floor, not a class limit: the same additive class solved exactly by least squares is error-free at every $\Delta$.
  }
  \label{tab:genesio-undersampled}
\end{table}

\begin{figure}[h]
  \centering
  \includegraphics[width=0.95\linewidth]{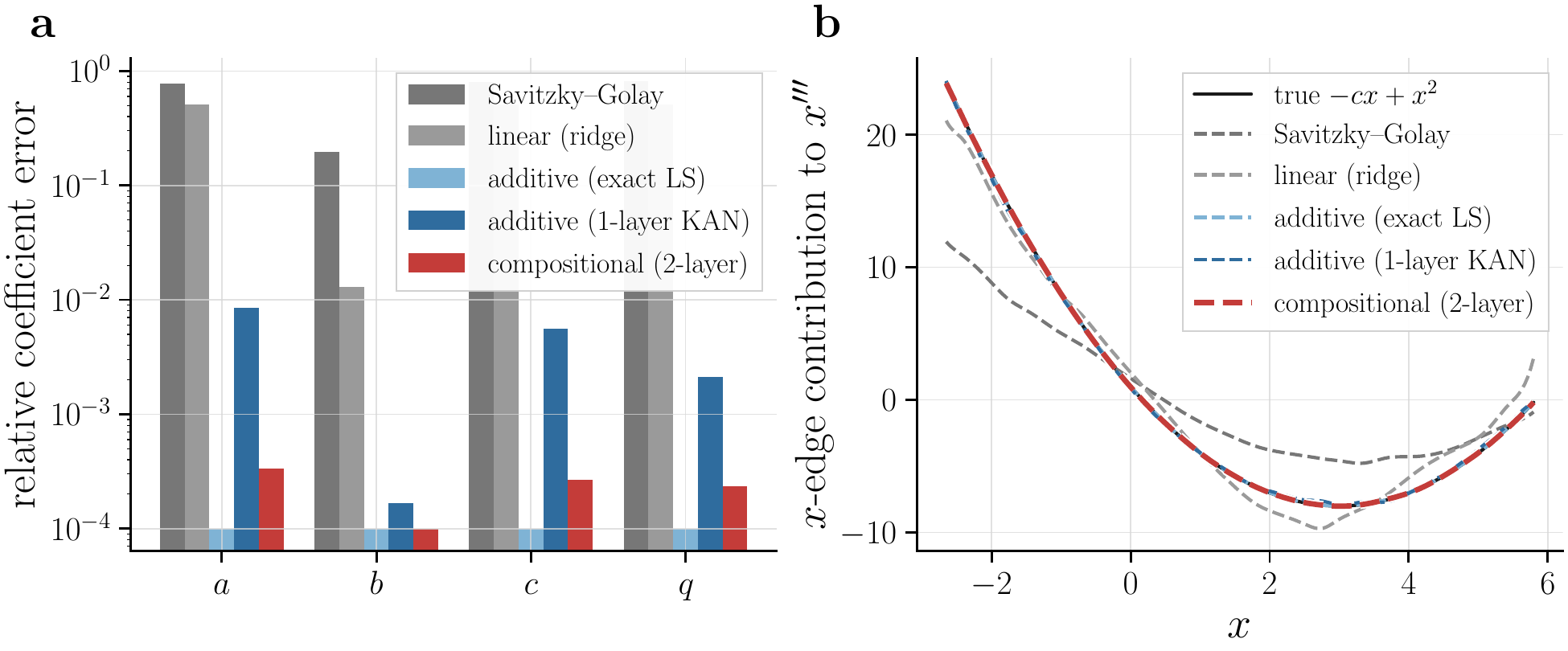}
  \caption{Genesio--Tesi at $\Delta=0.9$. \textbf{a:} relative coefficient error (oracle-library least squares) per front end; bars floored at $10^{-4}$. \textbf{b:} the library-free $x$-edge of the KANDy equation layer read off each front end's derivatives against the true $-cx+qx^{2}$; the learned lifts reproduce the quadratic, the fixed Savitzky--Golay formula does not.}
  \label{fig:genesio-undersampled}
\end{figure}

Table~\ref{tab:genesio-undersampled} and Fig.~\ref{fig:genesio-undersampled} give three results. First, the front end dominates: at $\Delta=0.9$ every learned lift, including the exactly solved additive class, recovers all four coefficients to within $1\%$ from the same data on which the oracle-tuned fixed formula is wrong by $82\%$ and sparse regression loses three terms. Transferring the delay-to-derivative map from a densely sampled calibration trajectory to a coarsely sampled deployment trajectory is exactly what a learned lift can do and a fixed formula cannot. Second, with derivatives held fixed, the sparse-regression backends agree: on a polynomial, correctly specified law, the oracle least squares and generic STLSQ recover the same equation from the same lifted derivatives. 
The library-free KANDy layer recovers the same four terms but with a seed spread of up to $14\%$ on the stiffness $a$ and $11\%$ on $c$ at $\Delta=0.9$ (Fig.~\ref{fig:genesio-undersampled}b shows the first seed), from collinearity among the three derivative edges---a back-end property independent of the front end, and the reason the coefficient rows of Table~\ref{tab:genesio-undersampled} use the oracle least squares. 
Third, depth is not needed here: the exact-additive lift ties the compositional one at every $\Delta$. This is the prediction of Lemma~\ref{lem:mixed} for a weakly nonlinear vector field---the only nonlinearity is $qx^{2}$, so $g''=2q$ is small, and the window-to-derivative map stays additively separable to well past the sampling interval at which fixed formulas fail (we find the exact-additive $R^{2}$ on $x'''$ first drops below $0.999$ at $\Delta\approx2$, near the aliasing wall). 
The compositional lift earns its keep on the strongly nonlinear Lorenz observable of Table~\ref{tab:undersampled-jet}, where the same exact-additive solve reaches only $R^{2}=0.62$ on $x'$ at $\Delta t=0.40$ against $0.88$ for the two-layer map. On $x''$, however, the exact-additive class is on par with the compositional one there ($0.968$ versus $0.970$ at $\Delta t=0.40$), so the depth requirement of Section~\ref{sec:delay} is a statement about the velocity map, which is what Lemma~\ref{lem:mixed} addresses; Table~\ref{tab:undersampled-jet}'s one-layer column is an Adam-trained KAN and sits $0.02$--$0.06$ below the exactly solved class.

%\clearpage
\section{Baseline configuration for the sine-Gordon benchmark}
\label{sec:supp-baselines}

On the PDE benchmark of Section~\ref{sec:res-wsindy} we compare pointwise SINDy/PDE-FIND~\citep{sindy,rudy2017data} and weak-form WSINDy~\citep{messenger2021weaksindy,messenger2021weaksindypde} on identical noisy data, both through their PySINDy implementations. SINDy/PDE-FIND estimates derivatives with a Savitzky--Golay filter and solves $u_{tt}=\Theta\boldsymbol{\xi}$ by sequential thresholded least squares (threshold $0.05$ on normalized columns) over polynomial powers of $u$ up to degree five together with $u_{xx}$; WSINDy avoids pointwise differentiation by integrating the equation against compactly supported test functions $\Psi(x,t)$ ($K=2000$ test-function centres) and transferring derivatives onto $\Psi$. WSINDy is run with both a naive polynomial library and an oracle library containing $\sin(u)$. We also compare D-CIPHER~\citep{kacprzyk2023dcipher}, which searches for $\sum_j \eta_j\,\mathcal{O}_j u = g(\mathbf{x},u)$ with scalar weights on the differential operators and only $g$ discovered in closed form; since $g$ may not depend on $u_t$, a saturating damping $\kappa\tanh(\nu u_t)$ lies outside its model class at any noise level, a structural ceiling measured in Note~\ref{sec:supp-tanh}. All methods are scored on the recovered restoring force $R(u)$ against $-\sin(u)$ over the observed range, which separates derivative-noise error from library misspecification.

%\clearpage
\section{Equation backends on identical derivatives: what the free-form edges adds}
\label{sec:supp-backends}

The sine-Gordon comparison of Section~\ref{sec:res-wsindy} pairs each method with its own derivative estimator. To isolate the equation representation, this note feeds every back end the same spline-jet samples $(u,u_{xx},u_t,u_{tt})$ that the Higher-Order-KANDy equation layer consumes---the surrogate of Eq.~\eqref{eq:fieldjet} fitted once per noise realization---and changes only the read-out. Five back ends: the free-form additive KAN layer (the paper's method, $\lambda=0$); PySINDy's STLSQ optimizer with the settings of the paper's baselines (threshold $0.05$ on normalized columns) on $[1,u,u^{2},u^{3},\sin u,u_{xx},u_t]$, i.e.\ sparse regression handed the $\sin(u)$ column; the same optimizer on the polynomial library of Table~\ref{tab:sg-term-support} without it; a SINDy-KAN~\citep{howard2026sindykan}; and a Symbolic--KAN~\citep{faroughi2026symbolickan}. SINDy-KAN has no released code; the Symbolic--KAN authors released code after their preprint, which we also ran (below). The SINDy-KAN row re-implements the mechanism of their Section~3: a B-spline KAN is trained while each edge is simultaneously regressed by STLSQ onto the univariate library $[x,x^{2},x^{3},\sin x,\cos x]$ of its own input (their Eq.~13), with a trainable shadow coefficient matrix carrying an $\ell_{1}$ penalty and the composite loss of their Eq.~16; the equation is the sparse per-edge representation read off the trained edges. The Symbolic--KAN row re-implements their Eqs.~5--13 and 22--29: units that apply one analytic primitive from $\{0,1,x,x^{2},x^{3},\sin,\cos,\tanh,e^{x},\log(1+|x|)\}$, chosen by an annealed Gumbel-softmax gate with entropy regularization, to a learned scalar projection of the inputs, summed with sigmoid unit gates under a budgeted sparsity penalty and hardened to one-hot at the end. Support is scored as in Table~\ref{tab:sg-term-support}: each method's own selection step (the STLSQ threshold for the library methods, activation pruning for the free-form layer, the per-edge sparse regression for the SINDy-KAN) followed by the $0.01$ coefficient tolerance; the Symbolic--KAN's hardened network is not additive in the inputs, so it is scored on the restoring curve $u\mapsto f(u,\bar{u}_{xx},\bar{u}_t)$ and the finite-difference slope in $u_{xx}$ only. Noise $\{0,1,5,10\}\%$, five independent seeds per level, $12{,}000$ interior collocation points, an independent noise field per (seed, level), as in the audit; the free-form layer here carries the three edges $(u,u_{xx},u_t)$ of Section~\ref{sec:res-wsindy}'s fit, without the two extra null controls of the audit.

\begin{table}[h]
  \centering\small
  \begin{tabular}{llcccc}
    \toprule
    noise & back end (all on the same spline jet) & form RMS & $\hat{c}$ ($u_{xx}$) & exact support \\
    \midrule
    $0\%$  & Higher-Order-KANDy (free-form edges) & $0.002\pm0.000$ & $1.000$ & $5/5$ \\
           & STLSQ, $\sin(u)$ column supplied     & $0.000\pm0.000$ & $1.000$ & $5/5$ \\
           & STLSQ, polynomial library            & $0.707\pm0.000$ & $0.376$ & $0/5$ \\
           & SINDy-KAN (re-impl.)                 & $0.029\pm0.046$ & $0.91\pm0.26$ & $0/5$ \\
           & Symbolic--KAN (re-impl.)             & $0.723\pm0.009$ & $0.208$ & --- \\
    \midrule
    $1\%$  & Higher-Order-KANDy (free-form edges) & $0.030\pm0.013$ & $0.993$ & $5/5$ \\
           & STLSQ, $\sin(u)$ column supplied     & $0.012\pm0.006$ & $0.990$ & $5/5$ \\
           & STLSQ, polynomial library            & $0.707\pm0.000$ & $0.377$ & $0/5$ \\
           & SINDy-KAN (re-impl.)                 & $0.049\pm0.040$ & $0.32\pm0.24$ & $0/5$ \\
           & Symbolic--KAN (re-impl.)             & $0.725\pm0.007$ & $0.201$ & --- \\
    \midrule
    $5\%$  & Higher-Order-KANDy (free-form edges) & $0.095\pm0.039$ & $0.964$ & $5/5$ \\
           & STLSQ, $\sin(u)$ column supplied     & $0.089\pm0.041$ & $0.917$ & $2/5$ \\
           & STLSQ, polynomial library            & $0.706\pm0.001$ & $0.377$ & $0/5$ \\
           & SINDy-KAN (re-impl.)                 & $0.105\pm0.042$ & $-0.44\pm0.34$ & $0/5$ \\
           & Symbolic--KAN (re-impl.)             & $0.726\pm0.013$ & $0.203$ & --- \\
    \midrule
    $10\%$ & Higher-Order-KANDy (free-form edges) & $0.230\pm0.037$ & $0.796$ & $5/5$ \\
           & STLSQ, $\sin(u)$ column supplied     & $0.170\pm0.041$ & $0.845$ & $0/5$ \\
           & STLSQ, polynomial library            & $0.706\pm0.002$ & $0.378$ & $0/5$ \\
           & SINDy-KAN (re-impl.)                 & $0.243\pm0.021$ & $-0.73\pm0.31$ & $0/5$ \\
           & Symbolic--KAN (re-impl.)             & $0.728\pm0.015$ & $0.165$ & --- \\
    \bottomrule
  \end{tabular}
  \caption{Equation back ends on identical spline-jet derivatives (sine-Gordon collision; mean $\pm$ std over five seeds; form RMS is the error of the recovered restoring curve against $-\sin(u)$ over the observed range; true $\hat{c}=1$; support: each method's own selection step, then the $0.01$ tolerance). Handed the $\sin(u)$ column, sparse regression matches or beats the free-form edge on form at every level, while its own selection retains a spurious linear $u$ ($-0.013$ to $-0.034$) on three seeds at $5\%$ and on every seed at $10\%$, as the weak-form oracle does in Table~\ref{tab:sine-gordon-equations}; without the column it fails by coverage at every level, including zero noise. The SINDy-KAN's per-edge symbolic library recovers $\sin(u)$ on the $u$ edge (shape correlation ${\geq}0.997$ at every level) but its read-out of the $u_{xx}$ edge is ill-conditioned---on the narrow $u_{xx}$ range the columns $x$, $x^{3}$, $\sin x$, $\cos x$ are nearly collinear---so $\hat{c}$ changes sign across seeds and the support is never exact. The Symbolic--KAN fits the data ($R^{2}$ $0.92$--$0.99$) but, with dense projections mixing all three inputs, isolates no term.}
  \label{tab:sg-backends}
\end{table}

\begin{figure}[h]
  \centering
  \includegraphics[width=0.95\linewidth]{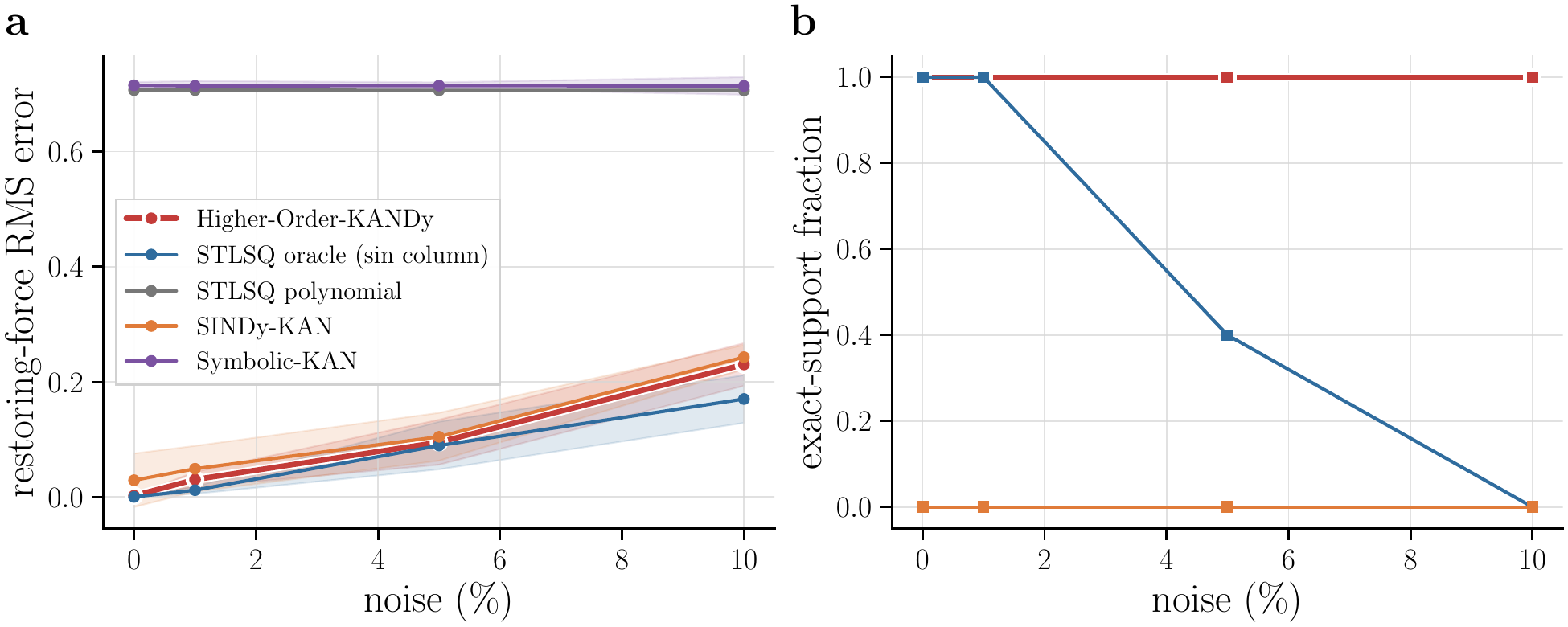}
  \caption{Back ends on identical derivatives. \textbf{a:} restoring-force RMS error versus noise (bands: $\pm1$ std over seeds). \textbf{b:} fraction of seeds with exact support (Symbolic--KAN omitted: non-additive).}
  \label{fig:sg-backends}
\end{figure}

Table~\ref{tab:sg-backends} and Fig.~\ref{fig:sg-backends} support two readings. First, the free-form edges add nothing to coefficient accuracy once sparse regression is handed the right column: the STLSQ oracle on the same derivatives is the better form estimator at every noisy level, as the WSINDy oracle is in Section~\ref{sec:res-wsindy}. Its advantage is that it does not require the column---the full-period $-\sin(u)$ is recovered from a spline edge with nothing named in advance, whereas the polynomial library fails by coverage at zero noise. Second, the two KAN-based discovery architectures fail for reasons that follow from their design rather than their tuning. SINDy-KAN discovers the equation by snapping each edge to a univariate library; on a coordinate whose observed range is narrow, the library is collinear, and the snap is ill-conditioned, the same range argument that limits polynomial libraries on the $u$ axis. Symbolic--KAN is a general symbolic regressor whose units act on learned projections of all inputs; nothing in it enforces that a term be a function of one derivative coordinate, so the additive-by-order structure that Higher-Order-KANDy imposes must instead be discovered, and on this problem is not. The comparison therefore does not show that one KAN parameterization is better than another; it shows that imposing one univariate edge per derivative coordinate is the right prior for the class of equations this paper targets. Both re-implementations are ours and are labeled as such.
We also ran the released Symbolic--KAN code\footnote{\texttt{github.com/sfaroughi3/Pub\_Symbolic\_KANs}, \texttt{Exp\_reaction\_diffusion}} with only its dataset generator replaced by the same jet samples, at the function defaults (two blocks of six units, $2{,}000$ Adam epochs followed by L-BFGS, default gate and unit-sparsity weights); its unit gates killed every unit at $1\%$ and $5\%$ noise (a constant output) and left one linear unit at $0\%$ and $10\%$ ($R^{2}$ $0.25$ and $0.40$). 
%The authors' own experiments use four blocks and $10{,}800$ epochs, so we regard this as an untuned run rather than a result, report it here for completeness, and score the re-implementation in the table.

% ------------------------------------------------------------------------

%\clearpage
\section{Discovered equations, sine-Gordon benchmark}
\label{sec:supp-equations}

\begin{figure}[t]
  \centering
  \includegraphics[width=\linewidth]{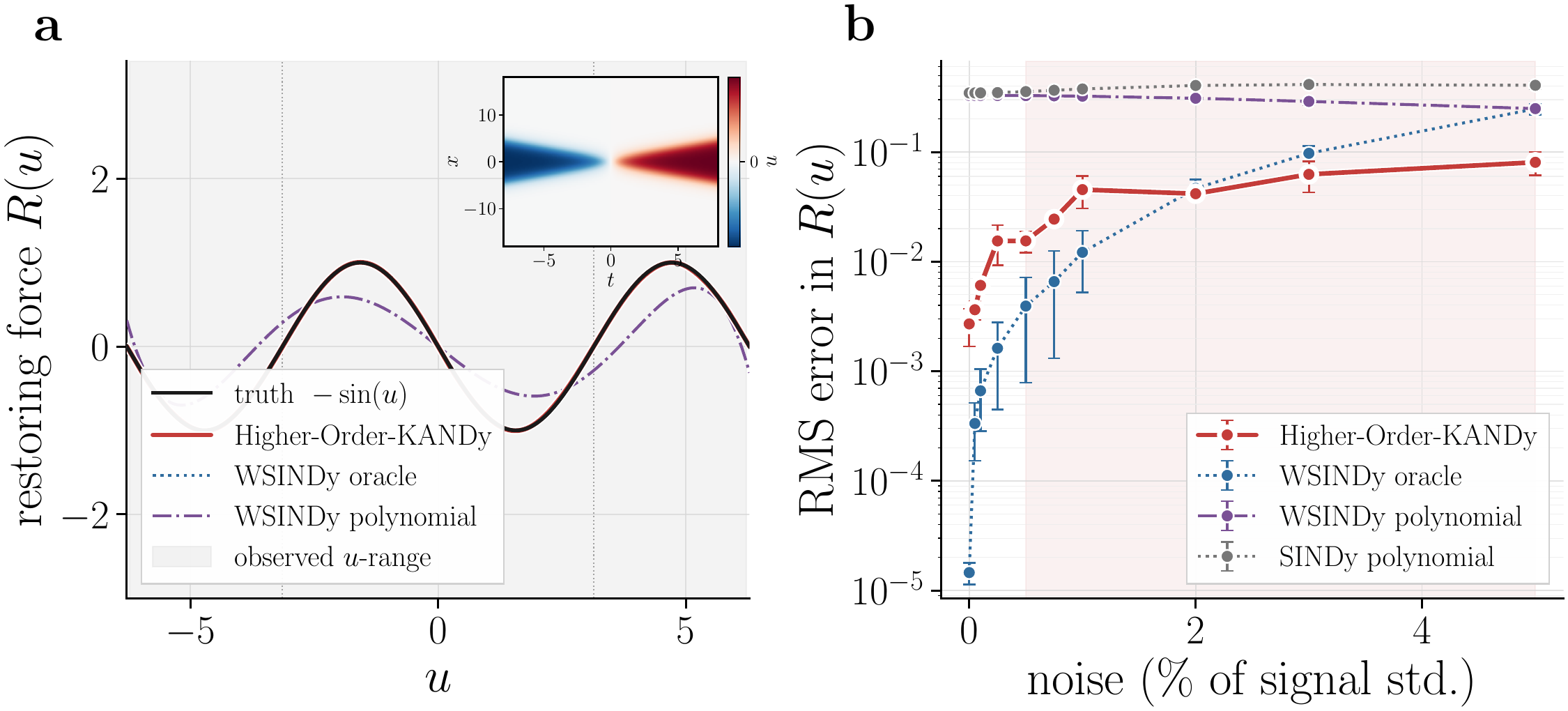}
  \caption{\textbf{Sine-Gordon kink--antikink collision: form recovery under
    noise.} \textbf{a:} the recovered restoring force $R(u)$ against
    $-\sin(u)$ over the observed $u$-range (shaded; inset: the collision
    field). The free-form Higher-Order-KANDy edge tracks the full-period
    $-\sin(u)$, while the polynomial-library recovery flattens outside a
    single period---the range limitation of Section~\ref{sec:res-wsindy},
    which no noise level fixes. \textbf{b:} RMS error of $R(u)$ versus noise:
    the WSINDy oracle, handed the exact $\sin(u)$ column, is the better form
    estimator at every noise level, the gap narrowing toward $5\%$.}
  \label{fig:sg-collision}
\end{figure}

Table~\ref{tab:sine-gordon-equations} provides the complete equations generated by each method applied to the kink-antikink collision equation, $u_{tt}=u_{xx}-\sin(u)$, at each of the twelve noise levels in the sweep found in Section~\ref{sec:res-wsindy}. The equation shown is the representative repeat (the first seed). The support column counts how many of the five repeats selected that exact term set. Every selected term is shown, including spurious terms; a term counts for every method only if its coefficient in the $u_{tt}$- normalized equation exceeds $0.01$---one percent of the unit-scale true coefficients---and smaller selected terms are omitted as numerically inactive. The correct column indicates the number of the two true terms ${u_{xx},\sin u}$ that were recovered. All rows come from the symmetric audit protocol: the Higher-Order-KANDy equation layer carries five free-form edges---$u$, $u_{xx}$, and the null controls $u_t$, $u_x$, $u_{xt}$---with no parsimony penalty on any edge ($\lambda=0$), an independent noise realization is drawn per (repeat, level), and the network initialization varies with the repeat seed. A $\lambda$-sensitivity audit confirms the support selection owes nothing to a sparsity prior---indeed the opposite: adding a uniform L1 penalty on all five edges destroys recovery rather than aiding it (at $\lambda=0.1$ the discovered equation collapses to $u_{tt}=0.23\,u_{xx}$ at every noise level tested, and at $\lambda=0.3$ to $u_{tt}=0.10\,u_{xx}$), so the active-set selection is a property of the learned free-form edges and the audit gates, not of a parsimony penalty.
D-CIPHER is a reduced diagnostic (one seed, four noise levels, original protocol) re-scored at the same coefficient tolerance.

{\scriptsize
\setlength{\tabcolsep}{3pt}
\begin{longtable}{@{}l>{\raggedright\arraybackslash}p{3.0cm}>{\raggedright\arraybackslash}p{5.9cm}c>{\raggedright\arraybackslash}p{1.6cm}c@{}}
\caption{Equations discovered for the sine-Gordon kink--antikink collision, $u_{tt}=u_{xx}-\sin(u)$, at every noise level\rev{ of the symmetric audit protocol (noise to $10\%$; support = repeats out of five selecting exactly the displayed term set, which is the first seed's and coincides with the true set only where the spurious column is empty)}. Higher-Order-KANDy rows are set in bold; the grey D-CIPHER rows are the reduced one-seed diagnostic and carry no support
count.}
\label{tab:sine-gordon-equations}\\
\toprule
noise & method & discovered equation & correct & spurious & support \\
\midrule
\endfirsthead
\multicolumn{5}{@{}l}{\emph{Table~\ref{tab:sine-gordon-equations}, continued}}\\
\toprule
noise & method & discovered equation & correct & spurious & support \\
\midrule
\endhead
\midrule
\multicolumn{5}{r@{}}{\emph{continued on next page}}\\
\endfoot
\bottomrule
\endlastfoot
0\% & \textbf{Higher-Order-KANDy} & $u_{tt} = -1.000\,\sin(u) + 1.001\,u_{xx}$ & 2/2 & -- & 5/5 \\
 & WSINDy oracle & $u_{tt} = -1.000\,\sin(u) + 1.000\,u_{xx}$ & 2/2 & -- & 5/5 \\
 & \textcolor{gray}{D-CIPHER (GP)} & $u_{tt} = -0.891\,\sin(u) + 0.794\,u_{xx}$ & 2/2 & -- & -- \\
 & WSINDy polynomial & $u_{tt} = -0.520\,u + 0.053\,u^{3} - 0.024\,u_t + 0.775\,u_{xx}$ & 1/2 & $u$, $u^{3}$, $u_t$ & 3/5 \\
 & SINDy polynomial & $u_{tt} = -0.437\,u + 0.044\,u^{3} + 0.692\,u_{xx}$ & 1/2 & $u$, $u^{3}$ & 5/5 \\
\midrule
0.05\% & \textbf{Higher-Order-KANDy} & $u_{tt} = -1.000\,\sin(u) + 1.003\,u_{xx}$ & 2/2 & -- & 5/5 \\
 & WSINDy oracle & $u_{tt} = -1.000\,\sin(u) + 1.000\,u_{xx}$ & 2/2 & -- & 5/5 \\
 & WSINDy polynomial & $u_{tt} = -0.570\,u + 0.058\,u^{3} + 0.015\,u_t + 0.805\,u_{xx}$ & 1/2 & $u$, $u^{3}$, $u_t$ & 3/5 \\
 & SINDy polynomial & $u_{tt} = -0.426\,u + 0.043\,u^{3} + 0.677\,u_{xx}$ & 1/2 & $u$, $u^{3}$ & 5/5 \\
\midrule
0.10\% & \textbf{Higher-Order-KANDy} & $u_{tt} = -1.002\,\sin(u) + 1.005\,u_{xx}$ & 2/2 & -- & 5/5 \\
 & WSINDy oracle & $u_{tt} = -1.000\,\sin(u) + 1.000\,u_{xx}$ & 2/2 & -- & 5/5 \\
 & WSINDy polynomial & $u_{tt} = -0.536\,u + 0.055\,u^{3} - 0.016\,u_t + 0.783\,u_{xx}$ & 1/2 & $u$, $u^{3}$, $u_t$ & 3/5 \\
 & SINDy polynomial & $u_{tt} = -0.396\,u + 0.039\,u^{3} + 0.635\,u_{xx}$ & 1/2 & $u$, $u^{3}$ & 5/5 \\
\midrule
0.25\% & \textbf{Higher-Order-KANDy} & $u_{tt} = -0.981\,\sin(u) + 0.988\,u_{xx}$ & 2/2 & -- & 5/5 \\
 & WSINDy oracle & $u_{tt} = -1.002\,\sin(u) + 1.000\,u_{xx}$ & 2/2 & -- & 5/5 \\
 & WSINDy polynomial & $u_{tt} = -0.537\,u + 0.055\,u^{3} + 0.016\,u_t + 0.787\,u_{xx}$ & 1/2 & $u$, $u^{3}$, $u_t$ & 2/5 \\
 & SINDy polynomial & $u_{tt} = -0.261\,u + 0.023\,u^{3} + 0.445\,u_{xx}$ & 1/2 & $u$, $u^{3}$ & 5/5 \\
\midrule
0.50\% & \textbf{Higher-Order-KANDy} & $u_{tt} = -0.966\,\sin(u) + 0.991\,u_{xx}$ & 2/2 & -- & 5/5 \\
 & WSINDy oracle & $u_{tt} = -1.000\,\sin(u) + 1.000\,u_{xx}$ & 2/2 & -- & 5/5 \\
 & \textcolor{gray}{D-CIPHER (GP)} & $u_{tt} = -0.843\,\sin(u) + 0.764\,u_{xx}$ & 2/2 & -- & -- \\
 & WSINDy polynomial & $u_{tt} = -0.487\,u + 0.049\,u^{3} + 0.735\,u_{xx}$ & 1/2 & $u$, $u^{3}$ & 1/5 \\
 & SINDy polynomial & $u_{tt} = -0.102\,u + 0.222\,u_{xx}$ & 1/2 & $u$ & 5/5 \\
\midrule
0.75\% & \textbf{Higher-Order-KANDy} & $u_{tt} = -0.982\,\sin(u) + 0.998\,u_{xx}$ & 2/2 & -- & 5/5 \\
 & WSINDy oracle & $u_{tt} = -1.001\,\sin(u) + 1.000\,u_{xx}$ & 2/2 & -- & 5/5 \\
 & WSINDy polynomial & $u_{tt} = -0.562\,u + 0.057\,u^{3} - 0.014\,u_t + 0.818\,u_{xx}$ & 1/2 & $u$, $u^{3}$, $u_t$ & 2/5 \\
 & SINDy polynomial & $u_{tt} = -0.032\,u + 0.127\,u_{xx}$ & 1/2 & $u$ & 5/5 \\
\midrule
1\% & \textbf{Higher-Order-KANDy} & $u_{tt} = -0.969\,\sin(u) + 0.984\,u_{xx}$ & 2/2 & -- & 5/5 \\
 & WSINDy oracle & $u_{tt} = -0.994\,\sin(u) + 1.000\,u_{xx}$ & 2/2 & -- & 5/5 \\
 & \textcolor{gray}{D-CIPHER (GP)} & $u_{tt} = -0.857\,\sin(u) - 0.011\,u_{xt} + 0.770\,u_{xx}$ & 2/2 & $u_{xt}$ & -- \\
 & WSINDy polynomial & $u_{tt} = -0.500\,u + 0.050\,u^{3} + 0.749\,u_{xx}$ & 1/2 & $u$, $u^{3}$ & 4/5 \\
 & SINDy polynomial & $u_{tt} = 0.083\,u_{xx}$ & 1/2 & -- & 4/5 \\
\midrule
2\% & \textbf{Higher-Order-KANDy} & $u_{tt} = -0.967\,\sin(u) + 1.003\,u_{xx}$ & 2/2 & -- & 5/5 \\
 & WSINDy oracle & $u_{tt} = -1.002\,\sin(u) + 0.999\,u_{xx}$ & 2/2 & -- & 5/5 \\
 & WSINDy polynomial & $u_{tt} = -0.520\,u + 0.053\,u^{3} - 0.019\,u_t + 0.768\,u_{xx}$ & 1/2 & $u$, $u^{3}$, $u_t$ & 4/5 \\
 & SINDy polynomial & $u_{tt} = 0.036\,u_{xx}$ & 1/2 & -- & 4/5 \\
\midrule
3\% & \textbf{Higher-Order-KANDy} & $u_{tt} = -0.984\,\sin(u) + 1.023\,u_{xx}$ & 2/2 & -- & 5/5 \\
 & WSINDy oracle & $u_{tt} = -1.009\,\sin(u) + 1.001\,u_{xx}$ & 2/2 & -- & 4/5 \\
 & WSINDy polynomial & $u_{tt} = -0.531\,u + 0.054\,u^{3} + 0.792\,u_{xx}$ & 1/2 & $u$, $u^{3}$ & 2/5 \\
 & SINDy polynomial & $u_{tt} = -0.047\,u + 0.025\,u_{xx}$ & 1/2 & $u$ & 5/5 \\
\midrule
5\% & \textbf{Higher-Order-KANDy} & $u_{tt} = -0.790\,\sin(u) + 0.879\,u_{xx}$ & 2/2 & -- & 5/5 \\
 & WSINDy oracle & $u_{tt} = -0.012\,u - 0.998\,\sin(u) + 1.007\,u_{xx}$ & 2/2 & $u$ & 3/5 \\
 & \textcolor{gray}{D-CIPHER (GP)} & $u_{tt} = -1.085\,\sin(u) + 0.021\,u_{xt} + 0.842\,u_{xx}$ & 2/2 & $u_{xt}$ & -- \\
 & WSINDy polynomial & $u_{tt} = -0.548\,u + 0.056\,u^{3} + 0.793\,u_{xx}$ & 1/2 & $u$, $u^{3}$ & 2/5 \\
 & SINDy polynomial & $u_{tt} = -0.172\,u + 0.019\,u_{xx}$ & 1/2 & $u$ & 5/5 \\
\midrule
7.5\% & \textbf{Higher-Order-KANDy} & $u_{tt} = -0.670\,\sin(u) + 0.774\,u_{xx}$ & 2/2 & -- & 5/5 \\
 & WSINDy oracle & $u_{tt} = -1.018\,\sin(u) + 1.013\,u_{xx}$ & 2/2 & -- & 4/5 \\
 & WSINDy polynomial & $u_{tt} = -0.546\,u + 0.055\,u^{3} - 0.020\,u_t + 0.788\,u_{xx}$ & 1/2 & $u$, $u^{3}$, $u_t$ & 2/5 \\
 & SINDy polynomial & $u_{tt} = -0.413\,u + 0.028\,u^{3} + 0.018\,u_{xx}$ & 1/2 & $u$, $u^{3}$ & 3/5 \\
\midrule
10\% & \textbf{Higher-Order-KANDy} & $u_{tt} = -0.695\,\sin(u) + 0.801\,u_{xx}$ & 2/2 & -- & 4/5 \\
 & WSINDy oracle & $u_{tt} = -0.015\,u - 0.988\,\sin(u) + 0.989\,u_{xx}$ & 2/2 & $u$ & 3/5 \\
 & WSINDy polynomial & $u_{tt} = -0.545\,u + 0.055\,u^{3} + 0.787\,u_{xx}$ & 1/2 & $u$, $u^{3}$ & 1/5 \\
 & SINDy polynomial & $u_{tt} = -0.694\,u + 0.054\,u^{3} + 0.016\,u_{xx}$ & 1/2 & $u$, $u^{3}$ & 3/5 \\
\end{longtable}
}

\end{document}